\newtheorem{theorem}{Theorem}[section]
\newtheorem{corollary}[theorem]{Corollary}
\newtheorem{lemma}[theorem]{Lemma}
\newtheorem{proposition}[theorem]{Proposition}
\theoremstyle{definition}
\newtheorem{definition}[theorem]{Definition}
\newtheorem{remark}[theorem]{Remark}
\numberwithin{equation}{section}
\numberwithin{equation}{section} 
\newcommand {\Q}{{\mathbb{Q}}}
\newcommand {\C}{{\mathbb{C}}}
\newcommand {\PP}{{\mathbb{P}}}
\newcommand {\R}{{\mathbb{R}}}
\newcommand {\Z}{{\mathbb{Z}}}  
\newcommand{\rmd}{\mathrm {d}}
\newcommand{\rme}{\mathrm {e}}
\newcommand{\rmB}{\mathrm {B}}
\newcommand{\Aut}{\mathrm {Aut}}
\newcommand{\Bin}{\mathrm {Bin}}
\newcommand{\Coef}{\mathrm {Coef}}
\newcommand{\Gl}{\mathrm {GL}}
\newcommand{\Id}{\mathrm {Id}} 
\renewcommand{\Im}{\mathrm {Im}}
\newcommand{\PGl}{\mathrm {PGL}}
\newcommand{\calB}{\mathcal {B}}
\newcommand{\calD}{\mathcal {D}}
\newcommand{\calE}{\mathcal {E}} 
\newcommand{\calF}{\mathcal {F}}
\newcommand{\calI}{\mathcal {I}}
\newcommand{\calL}{\mathcal {L}}
\newcommand{\calM}{\mathcal {M}}
\newcommand{\calN}{\mathcal {N}}
\newcommand{\calQ}{\mathcal {Q}}
\newcommand{\calR}{\mathcal {R}}
\newcommand{\calZ}{\mathcal {Z}}
\newcommand{\Bir}{\mathrm {Bir}}
\newtheorem{theorema}{Theorem}[section]
\def\atop#1#2{
\genfrac{}{}{0pt} {} 
{#1} 
{#2}}
\def\virgule{,}
\begin{document}


\baselineskip=17pt


\title{Number of integers represented by families of binary forms}

\author{\'Etienne Fouvry\\
Univ. Paris--Sud, CNRS, Universit\' e Paris--Saclay, CNRS,    \\
Laboratoire  de  Math\' ematiques  d'Orsay,     \\
91405   Orsay,   France \\
E-mail: Etienne.Fouvry@universite-paris-saclay.fr
\and 
Michel Waldschmidt\\
Sorbonne Universit\' e and Universit\' e de Paris, \\
CNRS, IMJ--PRG, \\
75005 Paris, France\\
E-mail: michel.waldschmidt@imj-prg.fr}

\date{\today}

\maketitle

\hfill{\em Nous d\'edions ce travail \`a la m\'emoire d'Andr\'e Schinzel }

\hfill{\em avec notre profond respect et notre  affectueuse admiration}

\renewcommand{\thefootnote}{}

\footnote{2020 \emph{Mathematics Subject Classification}: Primary 11E76; Secondary 11D45 11D85.}

\footnote{\emph{Key words and phrases}: Binary forms, Representation of integers by binary forms, Families of Diophantine equations.}

\renewcommand{\thefootnote}{\arabic{footnote}}
\setcounter{footnote}{0}

\begin{abstract}  
We extend our previous results on the number of integers which are values of some cyclotomic form of degree larger than a given value (see \cite{FW1}), to more general families
of binary forms with integer coefficients. Our main ingredient is an asymptotic upper bound for the cardinality of the set of values which are common to two non--isomorphic
binary forms of degree greater than $3$. We apply our results to  some typical examples of families of binary forms.
  \end{abstract}
  

\section{Introduction} 
Let $d\geq 3$ be an integer. We denote by ${\Bin} (d, \Z)$ 
the set of binary forms $F= F(X,Y)$ with integer coefficients,  of degree $d$ and with  discriminant different from
zero.   For 
\begin{equation}\label{defgamma}
\gamma =\begin{pmatrix} a_1&a_2\\a_3&a_4
\end{pmatrix} \in {\Gl}(2, \Q),
\end{equation}
 and $F\in {\Bin}(d,\Z)$, $F\circ \gamma$ is the binary form with rational coefficients, defined by
$$
\left(  F\circ \gamma \right) (X_1,X_2)=
F(a_1X_1+a_2X_2, a_3X_1 +a_4X_2).
$$
Two elements $F_1$ and $F_2$ in ${\Bin}(d,\Z)$ are said to be {\em isomorphic} is there is a $\gamma \in {\Gl}(2,\Q)$ such that
$$
F_1 \circ \gamma =F_2.
$$
To  estimate the number of values simultaneously taken by  the binary forms  $F_1$ and $F_2$, we introduce the counting function, for $N$ an integer $\ge 1$, 
\begin{equation}\label{defNF1=F2}
\begin{aligned}
{\calN} (F_1\virgule F_2; N) &:=\sharp\left( F_1 (\Z^2) \cap F_2 (\Z^2 ) \cap [-N, +N] \right)\\
& = \sharp \bigl\{ m: \vert m \vert \leq N, \text{ there exists }(x_1,x_2,x_3,x_4) \in \Z^4
\\
& \hskip 2cm \text{ such that } m=F_1(x_1,x_2) =F_2 (x_3,x_4)\bigr\}.  
\end{aligned}
\end{equation}
Our first result gives an upper bound for this function when the two forms are not isomorphic.

 \begin{theorem}\label{keyresult} For every $d \geq 3$, there is a constant   $\vartheta_d <2/d$ such that, for every $\varepsilon >0$, for every pair
$(F_1,F_2)$ of no--isomorphic forms of ${\Bin}(d, \Z)$, for  $N \to\infty$, one has the bound
\begin{equation}\label{140}
{\calN}\left( F_1\virgule F_2;N\right) =O_{F_1,F_2,\varepsilon} \left( N^{\vartheta_d +\varepsilon}\right).
\end{equation}
\end{theorem}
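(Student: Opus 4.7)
The overall strategy is to identify $\calN(F_1,F_2;N)$, up to a factor $N^\varepsilon$, with the number of integer points on an auxiliary algebraic variety, and then to exploit the non-isomorphy hypothesis to save a power over the trivial bound $N^{2/d}$. For the reduction, observe that each integer $m$ counted by $\calN(F_1,F_2;N)$ yields at least one quadruple $(x_1,x_2,x_3,x_4)\in\Z^4$ with $F_1(x_1,x_2)=F_2(x_3,x_4)=m$, while, since $d\geq 3$, Evertse's uniform bound for Thue equations shows that the number of such quadruples is $O_{F_1,F_2,\varepsilon}(m^\varepsilon)$. Hence, up to a factor $N^\varepsilon$, it suffices to bound the number $T(N)$ of integer quadruples on the affine threefold
\[
V\;:\;F_1(x_1,x_2)\;=\;F_2(x_3,x_4)\qquad\subset\mathbb{A}^4,
\]
subject to $|F_1(x_1,x_2)|\leq N$; outside thin neighbourhoods of the real linear factors of $F_1$ and $F_2$, this confines the coordinates to a box of side $O(N^{1/d})$.

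The geometric analysis proceeds as follows. The polynomial $F_1(x_1,x_2)-F_2(x_3,x_4)$ is homogeneous of degree $d$ in four variables, so $V$ is a cone over a surface $\widetilde V\subset\PP^3$ of degree $d$. The natural ``large'' subvarieties of $V$ are the linear $2$-planes $\{(x_1,x_2)=\gamma(x_3,x_4)\}$, one for each $\gamma\in\Gl(2,\overline{\Q})$ satisfying $F_1\circ\gamma=F_2$; the hypothesis that no such $\gamma$ exists over $\Q$ forces each such plane to be defined over a proper finite extension $K/\Q$, so that its integer points are cut out by linear equations with irrational coefficients and are $O(1)$ in number. Since, for $d\geq 3$, the automorphism group of a binary form is finite, there are only finitely many such $\gamma$, hence only finitely many exceptional planes to remove.

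On the complement of these exceptional loci, $\widetilde V$ is geometrically integral, and one invokes a dimension-growth / determinant-method bound in the spirit of Heath-Brown, Broberg, Salberger and Browning: for a geometrically integral surface of degree $d\geq 3$ in $\PP^3$, the number of integer points of height $\leq B$ avoiding the lines and conics contained in it is $O_{d,\varepsilon}(B^{2-\delta_d+\varepsilon})$ for some explicit $\delta_d>0$. Taking $B\asymp N^{1/d}$ and accounting for the cone structure of $V$ gives $T(N)\ll N^{(2-\delta_d)/d+\varepsilon}$, whence the theorem with $\vartheta_d=(2-\delta_d)/d<2/d$.

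The hard part will be the geometric step: precisely cataloguing the exceptional subvarieties of $V$ coming from $\overline{\Q}$-isomorphisms between $F_1$ and $F_2$, and translating the non-isomorphy over $\Q$ into an effective bound for the integer points they carry. The determinant-method input is relatively standard once the geometric integrality of the generic part of $\widetilde V$ has been established.
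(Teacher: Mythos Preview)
Your high-level idea --- pass from values $m$ to quadruples on the threefold $V:F_1(x_1,x_2)=F_2(x_3,x_4)$, identify the lines on the associated surface $\widetilde V\subset\PP^3$ with the $\overline\Q$-isomorphisms $F_1\circ\gamma=F_2$, and apply a determinant-method bound off those lines --- is morally what lies behind the input from \cite{FW1} that the paper quotes as Propositions~\ref{FouWal} and~\ref{FouWalmod}. But the determinant step as you state it is too optimistic: affine dimension growth for the degree-$d$ hypersurface $V\subset\mathbb A^4$ gives only $B^{2+\varepsilon}$, which is $N^{2/d+\varepsilon}$ and saves nothing. To go below $B^2$ one must work on $\widetilde V$ off its lines, first checking that $\widetilde V$ is geometrically integral and that \emph{all} its lines (including the degenerate ones where one of the two $2\times2$ blocks drops rank) carry few rational points; the paper instead imports from \cite{FW1} the sharper, structure-specific bound $\calM^*(F_1,F_2;B)\ll B^{d\eta_d+\varepsilon}$. (The Evertse step, incidentally, is unnecessary: the trivial inequality $\calN\le\calM^*+1$ already suffices.)

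The decisive gap, however, is the sentence ``outside thin neighbourhoods of the real linear factors \dots''. You never come back to those neighbourhoods, and they are not negligible: when $F_1$ has a real zero, integers $(x_1,x_2)$ with $0<|F_1(x_1,x_2)|\le N$ can have $\max\{|x_1|,|x_2|\}$ as large as a fixed power of $N$ strictly exceeding $1/d$, so the box $B\asymp N^{1/d}$ misses genuinely many points. This is exactly the content of \S\ref{2.3}: Proposition~\ref{Prop2.3} bounds the number of such far-out points by combining Fel'dman's effective improvement of Liouville's inequality (Lemma~\ref{Feldman}) with a Hooley-type count of good rational approximations (Lemma~\ref{InspirHooley}). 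In \S\ref{2.4} the paper then introduces a parameter $\Delta=N^\tau$, uses Proposition~\ref{FouWalmod} inside the enlarged box of side $N^{1/d}\Delta$ and Proposition~\ref{Prop2.3} outside it, and optimises $\tau$; this balancing is precisely what produces the exponent $\vartheta_d=d\eta_d/(d\eta_d+d-2)$ of \eqref{valueofthetad}. Without this second half your argument only closes when $F_1F_2$ has no real zero in $\PP^1(\R)$ --- which is the easy case treated first in \S\ref{2.4}, with the better exponent $\eta_d$.
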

This theorem calls for the following comments:
\begin{remark} The point in this theorem is  that the constant $\vartheta_d$, defined in \eqref{valueofthetad}, satisfies the inequality $\vartheta_d < 2/d$ (see the inequalities 
\eqref{eta<2/d} below). In fact, it is known that for any $F \in {\Bin}(d,\Z)$, there exists $C_F >0$, such that, for $N$ tending to infinity, one has the equality
\begin{equation*} 
{\calN}(F\virgule F;N) =\left( C_F +o_F(1) \right) N^{2/d},
\end{equation*}
(see Theorem \ref{Stewart} in \S \ref{para1.1}, due to Stewart and Xiao \cite[Theorem 1.1]{St-Xi}).
\end{remark}
\begin{remark} The explicit value of $\vartheta_d$ given in \eqref{valueofthetad} leads to the inequality $\vartheta_d >1/d$ for all  $d\geq 3$ (see \eqref{eta<2/d}). It also shows that $\vartheta_d$ is asymptotic
to $1/d$ as $d\to\infty$. This value is asymptotically optimal as shown by the two forms
$$
F_1 (X,Y) =X^d+Y^d \text{ and } F_2 (X,Y) = X^d +2 Y^d.
$$
These two forms are not isomorphic. From the equalities $F_1 (n,0)=F_2 (n,0) =n^d$, we deduce the lower bound
$$
{\calN} (F_1\virgule F_2;N)\geq N^{1/d}\ (N\geq 1).
$$
\end{remark}
\begin{remark} 
 According to \cite[Corollaire 3.3]{FW1}, if the two forms $F_1$, $F_2$ are positive definite and at least one of them is not divisible by a linear form with rational coefficients, then the exponent $\vartheta_d$ in the conclusion of Theorem \ref{keyresult}  can be replaced by $\eta_d$ with $\eta_d<\vartheta_d$ (see the definition of $\eta_d$  and $\vartheta_d$ in \S\ref{SS:Beginning}).
\end{remark}
\begin{remark}  \label{Remark:NoRealRoot}
We will show in \S\ref{2.4} that the exponent $\vartheta_d$ in the conclusion of Theorem \ref{keyresult}  can be replaced by  the coefficient $\eta_d$ quoted in the previous remark when the binary form $F_1(X,Y)F_2(X,Y)$ has no real root. 
\end{remark}
\begin{remark} Theorem \ref{keyresult} is no more valid for $d=2$. This is well known: see for instance \cite[Prop. 6.1, eq. (6.3)]{FLW}, where, choosing 
$F_1 (X,Y) = X^2+Y^2$ and $F_2 (X,Y)= X^2 +XY +Y^2$, one has, for $B$ tending to infinity, the asymptotic formula
$$
{\calN} (F_1\virgule F_2;B) = \left( \beta_0 +o (1) \right) B\big(\log B)^{-3/4},
$$
for some constant $\beta_0 >0$.
\end{remark}
\begin{remark} Theorem \ref{keyresult}  is immediately generalized to binary forms with rational coefficients, it suffices to multiply by a common denominator.  
\end{remark}
\begin{remark} The following proposition shows that if $F_1$ and $F_2$ are isomorphic, the equality \eqref{140} never holds.

\begin{proposition}\label{F1etF2isom}
 Let $d\geq 3$ and let $F_1$ and $F_2$ be two isomorphic binary forms in ${\Bin}(d, \Z)$. Then there is  a positive constant $C_{F_1,F_2}$, such that, for $N$ tending to infinity, we have
the inequality
$$
{\calN}( F_1\virgule F_2; N) \geq \left( C_{F_1, F_2} -o_{F_1,F_2}(1) \right) N^{2/d}.
$$
\end{proposition}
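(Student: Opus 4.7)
The plan is to exploit the isomorphism in order to transport a positive proportion of the values of $F_2$ into $F_1(\Z^2)$, and then to invoke the Stewart--Xiao asymptotic (Theorem \ref{Stewart}) applied to $F_2$ alone.

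Let $\gamma \in \Gl(2,\Q)$ satisfy $F_1 \circ \gamma = F_2$, and let $D \geq 1$ be a common denominator of the four rational entries of $\gamma$. The key observation is that when $(x_3, x_4) \in (D\Z)^2$, the vector $\gamma(x_3,x_4)$ has integer coordinates. Writing $x_3 = Du_1$ and $x_4 = Du_2$, the homogeneity of $F_2$ together with the identity $F_1 \circ \gamma = F_2$ yields
\[
D^d F_2(u_1,u_2) = F_2(Du_1,Du_2) = F_1\bigl(\gamma(Du_1,Du_2)\bigr),
\]
so every integer of the form $D^d F_2(u_1,u_2)$ with $(u_1,u_2) \in \Z^2$ lies simultaneously in $F_1(\Z^2)$ and in $F_2(\Z^2)$.

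To convert this into a lower bound for $\calN(F_1\virgule F_2;N)$, I would count these common values: the set of distinct integers $D^d F_2(u_1,u_2)$ of absolute value at most $N$ is in bijection, via division by $D^d$, with the set of values taken by $F_2$ on $\Z^2$ of absolute value at most $N/D^d$. By Theorem \ref{Stewart} applied to $F_2$, the latter cardinality is asymptotically
\[
C_{F_2}\,(N/D^d)^{2/d} = \bigl(C_{F_2}\,D^{-2}\bigr)\,N^{2/d}
\]
as $N \to \infty$. Setting $C_{F_1,F_2} := C_{F_2}\,D^{-2} > 0$ then gives the desired inequality.

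There is no genuine obstacle here: the argument is a change of variables combined with Stewart--Xiao. The only point requiring care is the clearing of denominators so that $\gamma$ sends the scaled lattice $(D\Z)^2$ into $\Z^2$, together with the bookkeeping of the factor $D^d$ coming from the degree--$d$ homogeneity, which is exactly what produces the constant $D^{-2}$ in front of $N^{2/d}$.
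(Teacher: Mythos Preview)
Your argument is correct and is essentially the paper's own proof: clear denominators via an integer $D$, observe that the resulting scaled form $D^dF_2$ takes values inside $F_1(\Z^2)\cap F_2(\Z^2)$, and invoke Stewart--Xiao. The only cosmetic difference is that the paper packages the scaled form as an auxiliary $G_1\in\Bin(d,\Z)$ and applies Theorem~\ref{Stewart} to $G_1$ directly, whereas you apply it to $F_2$ at height $N/D^d$; the two formulations are equivalent.
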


 \begin{proof} Let $\gamma$ as in \eqref{defgamma} such that $F_1 =F_2 \circ \gamma$. Let $D\geq 1
$ be an integer such that $(Da_1, Da_2, Da_3, Da_4)$ belongs to $\Z^4$. By homogeneity, we deduce that the two forms
\[
G_1 (X_1,X_2) :=F_1 (DX_1, DX_2) 
\]
and
\[
G_2 (X_1,X_2) := F_2 \left( Da_1X_1+Da_2X_2, Da_3X_1+Da_4 X_2\right)
\]
are equal. So we have the equality of their images
$$
G_1 (\Z^2) = G_2 (\Z^2).
$$ We also have the obvious inclusions
$$
G_1 (\Z^2) \subset F_1 (\Z^2) \text{ and } G_2 (\Z^2) \subset F_2 (\Z^2),
$$
which lead to the inclusion
\begin{equation}\label{*}
G_1 (\Z^2) \subset F_1 (\Z^2) \cap F_2 (\Z^2).
\end{equation}
A new application of  the result of Stewart and Xiao (see Theorem \ref{Stewart} below) gives, for some
constant  $C_{G_1} >0$, the equality
\begin{equation}\label{**}
{\calN}( G_1\virgule G_1; N) =\left(C_{G_1}+o_{G_1}(1) \right) N^{2/d},
\end{equation}
as $N$ tends to infinity.  Gathering \eqref{*}  and  \eqref{**} we obtain the inequality claimed in Proposition \ref{F1etF2isom}.
\end{proof}
\end{remark}

Theorem \ref{keyresult} is an important tool for our generalisation of our previous study in \cite{FW1}, 
where we produced an asymptotic formula for the number of values $m$, with $\vert m \vert \leq B$  taken by some cyclotomic form $\Phi_n$ of degree  $\varphi (n) $ greater than a fixed $ d \geq 3$. 
Recall that  $\varphi$ is the Euler function and that to the $n$--th cyclotomic polynomial $\phi_n (X)$, of degree $\varphi (n)$,  is attached the {\em cyclotomic form } $\Phi_n (X,Y):= Y^{\varphi (n)} \cdot \phi_n (X/Y)$.

Our purpose is to study the following  general problem:
\vskip .3cm
{\em Let $\calF$ be an infinite subset of $\displaystyle{\underset {d\geq 3}{\bigcup}{\Bin}(d, \Z)}$, satisfying natural properties. Let $
A$ be a fixed non negative integer. As $B$ tends to infinity, estimate the counting function
\begin{equation}\label{defRgeqd}
\begin{aligned}
{\calR}_{\geq d}  \left (\calF, B, A\right) := \sharp\, \bigl\{ m: 0\leq \vert m \vert \leq B, \, \text{ there is }   \, F\in \calF \text{ with } \deg F \geq d
\\
\text{ and } (x,y)\in \Z^2 \text{ with } \max \{\vert x \vert, \vert y \vert\} \geq A, \text{ such that } F(x,y) =m
\bigr\}. 
\end{aligned}
\end{equation} }
\vskip .3cm
The introduction of the parameter $A$ may be seen as artificial. It  is enacted to prevent from the following  phenomenon encountered for instance in the case of the family of cyclotomic forms  $\Phi_n$, where, for every prime $p$,
we have 
\begin{equation}\label{valueofPhi}
\Phi_p (1,1) =p\end{equation} (recall that $ \Phi_p (X,Y) = (X^p-Y^p)\big/ (X-Y)$). We wish to avoid counting these values, since the set of primes, by its cardinality, completely hides the set of  other values $\Phi_n (x,y)$
when $\max \{\vert x\vert, \vert y \vert \}\geq 2$ and $\varphi (n) \geq d$.

Let $\calF$ be a set of binary forms. We denote by $\calF_d$ the subset of forms of $\calF$ of degree $d$. We will study the set of values taken by forms belonging to some 
  {\em $(A,A_1, d_0, d_1,  \kappa)$--regular  } families $\calF$, that we define as follows.
\begin{definition}\label{agreable} Let $A$, $A_1$, $d_0$, $d_1$ be integers and let $\kappa$ be a real number such that
\begin{equation}\label{condAA1d0kappa}
A\geq 1, \, A_1\geq 1, \,d_1\geq  d_0 \geq 0, \, 0< \kappa <A.
\end{equation}
 Let $\calF$ be a set of binary forms. We say that $\calF$ is {\em$(A,A_1, d_0, d_1, \kappa)$--regular} if it satisfies the following conditions:
 \begin{enumerate}[label=\upshape(\roman*), leftmargin=*, widest=iii]
\item
\label{it:1}
The set $\calF$ is infinite,
\item 
\label{it:2}
We have the inclusion
$$
\calF \subset  \bigcup_{d\geq 3} {\Bin} (d,\Z),
$$
\item 
 \label{it:3}
For all $d\geq 3$, one has  the inequality $\sharp \calF_d  \leq d^{A_1} $,  
\item 
\label{it:4}
Two forms of $\calF$ are isomorphic if and only if they are equal,
\item 
\label{it:5}
For any  $d\ge \max\{d_1,d_0+1\}$, the following holds
$$
\left. 
\begin{array}{ll}
F\in \calF_d ,   \\
(x,y) \in \Z^2 \text{ and }  F(x,y) \ne 0, \\
\max\{\vert x\vert, \vert y \vert\} \geq A,
\end{array}
\right\}
 \Rightarrow \max\{\vert x \vert, \vert y \vert \}\leq \kappa \left\vert F(x,y)\right\vert^{\frac 1{d-d_0}}. 
$$ 
\end{enumerate}
\end{definition}
The upper bound in the right hand side of (v) is trivial for $\max \{|x|,|y|\}\le \kappa$, this is why we request $A>\kappa$. 
 
 The family of cyclotomic forms
 $$
 \boldsymbol \Phi := \{ \Phi_n:  \varphi (n)\geq 4, \; n \not\equiv 2 \pmod 4\}
 $$
 satisfies the assumptions {\rm (i)},  {\rm (ii)},  {\rm (iii)},  {\rm (iv)}, but 
 is not $(1,A_1, d_0,d_1,  \kappa)$--regular for any value of  $A_1$, $d_0$, $d_1$  and $\kappa$, since \eqref{valueofPhi} shows that {\rm (v)} is not satisfied.  However $\boldsymbol \Phi$ is $(2,2, 0, 4, 2/\sqrt 3)$--regular, this is a consequence of  \cite[Th\'eor\`eme 4.10]{FW1} and of the classical inequality $n/(\log \log n) <\varphi (n) <n$.

 \subsection{Some facts on a single form}\label{para1.1}
 Before stating our main result concerning $\calR_{\geq d} (\calF,B,A) $ defined in \eqref{defRgeqd}, we recall some fundamental objects attached to a binary form $F \in {\Bin} (d,\Z)$
 when $d \geq 3$:
 \begin{itemize}
\item The  {\em fundamental domain} of  $F$ is 
$$
\calD (F) := \left\{ (x,y) \in \R^2 : \vert F(x,y) \vert \leq 1 \right\},
$$
\item The {\em area of the fundamental domain  of $F$} is  the real number
\begin{equation}\label{defAF}
A_F := \iint_{\calD (F)} {\rmd} x\, {\rmd} y.
\end{equation}
We always have $0 < A_F < \infty$.
\item The   {\em group of automorphisms } of $F$ is 
$$
\begin{aligned}
{\Aut}& (F;\Q):=
\\
& \left\{ \begin{pmatrix} a_1&a_2\\a_3& a_4\end{pmatrix}\in {\Gl}(2, \Q): F(X,Y)= F(a_1X+a_2Y, a_3 X+a_4 Y) \right\}.
\end{aligned}
$$
\end{itemize}
This is a finite subgroup of ${\Gl}(2, \Q)$. 
We now recall the important result of Stewart and  Xiao, that we already mentioned above \cite[ Theorems 1.1 and 1.2]{St-Xi}:
\begin{theorema}\label{Stewart} 
\emph{For every  $d\geq 3$, there is a constant $\kappa_d < 2/d$ such that, for all     $F\in {\Bin}(d, \Z)$ and for all $\varepsilon>0$,  the following equality 
$$
\calN (F\virgule F;B)= A_F\cdot  W_F \cdot B^{2/d} +O_{F,\varepsilon} \left(B^{\kappa_d +\varepsilon}\right),
$$ 
holds uniformly for  $B \to\infty$, where $W_F=W \left( {\Aut} (F; \Q)\right)$ depends only on the group $ {\Aut} (F; \Q)$.}
\end{theorema}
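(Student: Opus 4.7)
The plan is to decompose the quantity $\calN(F\virgule F;B)$ into a main geometric term counting lattice points in a dilated fundamental domain, a multiplicity correction coming from $\Aut(F;\Q)$, and an error term absorbing all the remaining ``accidental'' coincidences of values. First, since $\{(x,y)\in\R^2: |F(x,y)|\le B\}$ is the dilation of $\calD(F)$ by the factor $B^{1/d}$, a standard lattice-point count on this region (using that the boundary curve $|F(x,y)|=1$ is piecewise smooth away from its finitely many real linear factors) gives
\[
\sharp\bigl\{(x,y)\in\Z^2:|F(x,y)|\le B\bigr\}=A_F\cdot B^{2/d}+O_F(B^{1/d}).
\]

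Next, I would analyse the action of $\Aut(F;\Q)$ on the set of integer representations. Each $\sigma\in\Aut(F;\Q)$ preserves the value $F(x,y)$, and although $\sigma$ has rational entries, after clearing a bounded common denominator the orbit of any $(x,y)\in\Z^2$ meets $\Z^2$ in a sublattice of bounded index. Grouping representations into these orbits, the asymptotic density of integer pairs per orbit depends only on the finite group $\Aut(F;\Q)$; summing these local contributions produces the combinatorial constant $W_F=W(\Aut(F;\Q))$ and thereby the predicted main term $A_F\cdot W_F\cdot B^{2/d}$.

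The third and hardest step is to bound the number of values $m$ with $|m|\le B$ whose fibre $\{(x,y)\in\Z^2:F(x,y)=m\}$ contains pairs not related by any element of $\Aut(F;\Q)$. Such ``accidental'' coincidences force $m$ to have $R_F(m):=\sharp\{(x,y)\in\Z^2:F(x,y)=m\}$ strictly larger than what the automorphism group alone predicts. To bound these, I would invoke the Thue--Mahler--Evertse technology: uniform estimates of the form $R_F(m)\ll_{d,\varepsilon}m^{\varepsilon}$, sharpened by a gap principle separating integer solutions of $F(x,y)=m$, and by a determinant/Salberger-type argument controlling the number of ``small'' solutions. An averaging of these bounds over $|m|\le B$ yields that the total contribution of accidental coincidences is $O_{F,\varepsilon}(B^{\kappa_d+\varepsilon})$ for some explicit $\kappa_d<2/d$ depending only on $d$.

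The main obstacle is precisely this last step: a saving of $m^{\varepsilon}$ is insufficient, and one needs a \emph{power-saving} below $2/d$. This is where Stewart and Xiao's technical innovation lies, combining Thue-equation counts with the geometry of the dilated region $B^{1/d}\cdot\calD(F)$ near its boundary and its singularities. Once this saving is established, combining Steps 1--3 yields the claimed asymptotic with error exponent $\kappa_d<2/d$, uniformly in $B$.
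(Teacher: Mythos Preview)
This statement is not proved in the paper at all: it is explicitly recalled as a known result of Stewart and Xiao (the paper writes ``We now recall the important result of Stewart and Xiao \dots \cite[Theorems 1.1 and 1.2]{St-Xi}'' immediately before stating it, and the special environment \texttt{theorema} with alphabetic numbering is used precisely to mark it as an external input). So there is no ``paper's own proof'' to compare your attempt against; the paper simply cites \cite{St-Xi} and uses the conclusion as a black box.

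That said, your outline is a fair summary of the architecture of the Stewart--Xiao argument: lattice-point count in the dilated region $B^{1/d}\calD(F)$ for the main term, orbit analysis under $\Aut(F;\Q)$ to produce the rational constant $W_F$, and a power-saving bound on ``essentially represented'' integers (those with representations in more than one $\Aut(F;\Q)$-orbit) for the error. The genuine technical content, as you correctly flag, is entirely in the third step, and your description there is more of a wish list than a proof: the specific ingredients Stewart and Xiao use are Heath-Brown's $p$-adic determinant method (for small degrees) together with the Bombieri--Schmidt quantitative Thue theorem and a careful slicing of the region near the real linear factors of $F$; this is what produces the explicit exponents recorded in \eqref{valueofkappad}. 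Your sketch does not supply these, so as a self-contained proof it has a substantial gap, but since the paper itself does not attempt a proof either, the appropriate fix is simply to cite \cite{St-Xi} rather than to try to reprove it.
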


For $G$ a finite subgroup of ${\Gl}(2, \Q)$  which is the group of automorphisms of an element of $ {\Bin}(d, \Z)$, the  constant   $W(G)$  is a  rational number  which is defined
  in  \cite[Theorem 1.2]{St-Xi}. This definition is subtle since it depends on the denominators of the entries of the matrices belonging to   $G$. However
 for the   families $\calF$ that we will meet in 
  this paper, we will only need the equalities
  \begin{equation}\label{W=1W=1}
W\left(\{ \Id\}\right) =1,\,   W\left( \{\Id, \, -{\Id}\} \right)=1/2  \text{ and } 
 W\left(
 \left\{
  \begin{pmatrix}
   \pm 1 & 0\\ 0& \pm 1 
 \end{pmatrix} 
 \right\}
  \right)
  =1/4.
\end{equation}
Finally, the exponent $\kappa_d$ in Theorem \ref{Stewart} is defined by
 \begin{equation}\label{valueofkappad}
 \kappa_d=
 \begin{cases}
 \displaystyle{ \frac {12}{19} }&\text{ if } d=3,\\
\displaystyle{ \frac 3{(d-2)\sqrt d +3} }&\text{ if } 4\leq d \leq 8,\\
\displaystyle{\frac 1{d-1}} &\text{ if } d\geq 9. 
 \end{cases}
 \end{equation} Actually the value of  this exponent is  improved when  $F(X,Y)$ does not have a linear factor over   $\R [X,Y]$, see \cite[ formula (1.11)]{St-Xi}.
\subsection{An asymptotic formula for ${\calR}_{\geq d} (\calF,B,A)$}
Our central result is the following. The exponent $\vartheta_d$  is defined   in \eqref{valueofthetad}. 

\begin{theorem}\label{Thm-on-R}
Let $(A,A_1, d_0, d_1, \kappa)$ satisfying the conditions  \eqref{condAA1d0kappa}.   Let $\calF$ be a $(A,A_1,d_0, d_1, 
\kappa)$--regular family of binary forms. Then for every $d\geq  \max\{3, d_1\}$  and every positive $\varepsilon$, 
  one has the equality 
$$
\calR_{\geq d} (\calF, B, A) = \left( \sum_{F\in \calF_d} A_F W_F \right)   \cdot B^{2/d} + O_{\calF,A,d, \varepsilon}\bigl( B^{\vartheta_d+\varepsilon} \bigr)  +   O_{\calF,A,d} \bigl( B^{2/d^\dag}\bigr), 
$$
uniformly for $B  \to\infty$.
The  integer $d^\dag$ is   defined by
$$
d^\dag :=\inf\{ d' : d' >d \text{ such that } \calF_{d'} \ne \emptyset\}.
$$
\end{theorem}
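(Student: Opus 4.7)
The plan is to decompose $\calR_{\geq d}(\calF,B,A)$ according to the degree of the representing form: forms of degree exactly $d$ supply the announced main term via Theorem \ref{Stewart}, their mutual overlaps are controlled by Theorem \ref{keyresult}, and forms of degree strictly greater than $d$ are handled via the regularity condition (v) together with Theorem \ref{Stewart}. Setting $V(F) := \{F(x,y) : (x,y) \in \Z^2,\ \max\{|x|,|y|\} \geq A\} \cap [-B,+B]$, we have $\calR_{\geq d}(\calF,B,A) = \bigl|\bigcup_{F \in \calF,\ \deg F \geq d} V(F)\bigr|$.

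First I would treat the degree-$d$ part. By standard inclusion--exclusion,
\[
\Bigl| \bigl|\bigcup_{F \in \calF_d} V(F)\bigr| - \sum_{F \in \calF_d}|V(F)| \Bigr| \leq \sum_{\{F_1,F_2\}\subset \calF_d,\, F_1\neq F_2} |V(F_1) \cap V(F_2)|.
\]
Theorem \ref{Stewart} applied to each $F \in \calF_d$ gives $|V(F)| = A_F W_F B^{2/d} + O_{F,\varepsilon}(B^{\kappa_d + \varepsilon})$ (the $(2A+1)^2$ pairs discarded by imposing $\max\{|x|,|y|\}\geq A$ only remove $O_A(1)$ values); since (iii) yields $\sharp \calF_d \leq d^{A_1} = O_{\calF,d}(1)$, summing produces the main term up to an error $O_{\calF,d,\varepsilon}(B^{\kappa_d+\varepsilon})$. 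For the overlaps, property (iv) ensures that any two distinct forms in $\calF_d$ are non--isomorphic, so Theorem \ref{keyresult} gives $|V(F_1)\cap V(F_2)| = O_{F_1,F_2,\varepsilon}(B^{\vartheta_d + \varepsilon})$, and summing over the $O_{\calF,d}(1)$ pairs gives $O_{\calF,d,\varepsilon}(B^{\vartheta_d+\varepsilon})$.

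Next I would handle the contribution of degrees $>d$ via the trivial bound
\[
\calR_{\geq d}(\calF,B,A) - \Bigl|\bigcup_{F \in \calF_d} V(F)\Bigr| \leq \sum_{d' > d}\sum_{F \in \calF_{d'}} |V(F)|.
\]
Here condition (v), applicable since $d' > d \geq d_1$, plays a dual role. First, because $A > \kappa$ and $\max\{|x|,|y|\}\geq A$, the implication in (v) forces $|F(x,y)| \geq (A/\kappa)^{d'-d_0}$, so combined with $|F(x,y)| \leq B$ it confines $d'$ to the effectively finite range $d^\dag \leq d' \leq d_0 + (\log B)/\log(A/\kappa)$. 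Second, (v) gives the uniform-in-$F$ bound $|V(F)| \leq (2\kappa B^{1/(d'-d_0)}+1)^2 = O_\kappa(B^{2/(d'-d_0)})$. For the $O_{\calF,d}(1)$ forms in $\calF_{d^\dag}$, Theorem \ref{Stewart} yields $|V(F)| = O_F(B^{2/d^\dag})$, contributing $O_{\calF,d}(B^{2/d^\dag})$ in total; for $d' > d^\dag$ the geometric-type sum $\sum_{d' > d^\dag} (d')^{A_1} B^{2/(d'-d_0)}$ is dominated by its first term and absorbed into $O_{\calF,A,d}(B^{2/d^\dag})$.

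Combining the two steps gives the claimed asymptotic formula. The residual error $O(B^{\kappa_d+\varepsilon})$ from Theorem \ref{Stewart} is dominated by $B^{\vartheta_d+\varepsilon}$ after comparing the definitions \eqref{valueofthetad} and \eqref{valueofkappad}. The main obstacle lies in the higher-degree part: achieving the sharp exponent $2/d^\dag$ (rather than the weaker $2/(d^\dag - d_0)$ that (v) alone would give) requires treating the smallest higher degree $d^\dag$ via Theorem \ref{Stewart} (permissible because $\calF_{d^\dag}$ is finite, so the $F$-dependence of the Stewart--Xiao constant causes no damage), and invoking (v) only for $d' > d^\dag$ to simultaneously truncate the $d'$-range and bound each form uniformly.
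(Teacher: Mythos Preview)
Your argument is essentially that of the paper, and the degree-$d$ part is handled correctly. However, there is a genuine gap in your treatment of the higher-degree contribution when $d_0\ge 2$. You apply Theorem~\ref{Stewart} only to the forms in $\calF_{d^\dag}$ and then invoke condition~(v) for every $d'>d^\dag$, claiming that the resulting sum $\sum_{d'>d^\dag}(d')^{A_1}B^{2/(d'-d_0)}$ is absorbed into $O\bigl(B^{2/d^\dag}\bigr)$. But the leading term of that sum has exponent $2/(d'_{\min}-d_0)$, where $d'_{\min}$ is the least $d'>d^\dag$ with $\calF_{d'}\ne\emptyset$; nothing prevents $d'_{\min}=d^\dag+1$, in which case the exponent is $2/(d^\dag+1-d_0)$, and for $d_0\ge 2$ this strictly exceeds $2/d^\dag$. (The hypotheses allow any $d_0\ge 0$; e.g.\ Theorem~\ref{CaseofQ-} uses $d_0=2$.)

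The fix, which is exactly what the paper does, is to push the Stewart--Xiao treatment out to all degrees $d^\dag\le n\le d^\dag+d_0$: for each such $n$ the set $\calF_n$ is still finite by (iii), so $\sum_{n=d^\dag}^{d^\dag+d_0}\sum_{F\in\calF_n}\calN(F,F;B)=O_{\calF,d}\bigl(B^{2/d^\dag}\bigr)$. Condition~(v) is then invoked only for $n>d^\dag+d_0$, where it gives $\max\{|x|,|y|\}\le\kappa B^{1/(n-d_0)}\le\kappa B^{1/(d^\dag+1)}$, and counting the quadruples $(n,F,x,y)$ yields the safely small contribution $o\bigl(B^{2/d^\dag}\bigr)$. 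With this adjustment your proof is complete.
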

Recall that   $\calF_d$ is not empty for infinitely many values of $d$ since  the set $\calF$ is infinite. 

The assumption (v) in the definition  \ref{agreable} of a regular family cannot be omitted, even in the case of totally imaginary forms (homogeneous versions of polynomials without real roots), as show by the sequence of positive definite forms $(X-Y)^2(X-2Y)^2\cdots (X-dY)^2+dY^{2d}$, the value of which at the points $(x,y)=(n,1)$, $1\le n\le d$, is $d$.

The following is a direct application of \eqref{W=1W=1}:
\begin{corollary}\label{11/21/4}
Suppose that $\calF$ satisfies the hypothesis of Theorem \ref{Thm-on-R} and that,  for every $d\geq 3$, $\calF_d$   satisfies one of the three following conditions:
\begin{itemize}
\item[{\em C1}:] for all $F \in \calF_d$, we have ${\Aut} (F, \Q)= \{ {\Id}\}$,
\item[{\em C2}:] for all $F \in \calF_d$, we have ${\Aut} (F, \Q)= \{ \pm {\Id}\}$ (cyclic group of order $2$),
\item[{\em C3}:] for all $F \in \calF_d$, we have ${\Aut} (F, \Q)= \left\{ \begin{pmatrix} \pm1 & 0\\ 0 & \pm1\end{pmatrix}\right\}$ (Klein group of order $4$).
\end{itemize}
Then we have the equality
\begin{equation}\label{restricted}
\calR_{\geq d} (\calF, B, A) = C_d\cdot\left( \sum_{F\in \calF_d} A_F  \right)   \cdot B^{2/d} + O_{\calF,A,d, \varepsilon}\bigl( B^{\vartheta_d+\varepsilon} \bigr)  +   O_{\calF,A,d} \bigl( B^{2/d^\dag}\bigr) ,
\end{equation}
where the coefficient  $C_d $ has respectively the values $1$, $1/2$ or $1/4$ according to the condition {\em C1},  {\em C2},  {\em C3} respectively satisfied by  $\calF_d$.
 \end{corollary}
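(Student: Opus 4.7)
The plan is to derive Corollary \ref{11/21/4} as an immediate substitution into the asymptotic formula of Theorem \ref{Thm-on-R}, using the known values of $W(G)$ listed in \eqref{W=1W=1}.

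First, I would simply apply Theorem \ref{Thm-on-R} to $\calF$ (which, by hypothesis, is still an $(A,A_1,d_0,d_1,\kappa)$--regular family). This yields, for each $d \ge \max\{3, d_1\}$ and each $\varepsilon > 0$, the asymptotic
\[
\calR_{\geq d}(\calF, B, A) = \Bigl( \sum_{F\in \calF_d} A_F W_F \Bigr) B^{2/d} + O_{\calF,A,d,\varepsilon}(B^{\vartheta_d + \varepsilon}) + O_{\calF,A,d}(B^{2/d^\dag}),
\]
where $W_F = W(\Aut(F;\Q))$ by definition.

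Next I would exploit the assumption on $\calF_d$. In each of the three cases C1, C2, C3 the group $\Aut(F;\Q)$ is the same group $G_d$ for every $F \in \calF_d$, so $W_F$ takes one common value $W(G_d)$, which can be pulled out of the sum:
\[
\sum_{F\in \calF_d} A_F W_F = W(G_d) \sum_{F\in \calF_d} A_F.
\]
Invoking \eqref{W=1W=1}, one reads off $W(G_d) = 1$ under C1, $W(G_d) = 1/2$ under C2, and $W(G_d) = 1/4$ under C3. Setting $C_d$ equal to the corresponding value gives precisely the formula \eqref{restricted}.

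There is really no serious obstacle here: the corollary is a formal rewriting of Theorem \ref{Thm-on-R} under a hypothesis that forces $W_F$ to be constant on $\calF_d$, together with the three explicit evaluations of $W$ recorded in \eqref{W=1W=1}. The only minor point to keep track of is that the error terms in Theorem \ref{Thm-on-R} are unchanged by this cosmetic rewriting of the main term, so they transfer verbatim to \eqref{restricted}.
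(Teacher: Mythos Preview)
Your proposal is correct and matches the paper's own reasoning: the paper simply states that the corollary is ``a direct application of \eqref{W=1W=1}'' and gives no further proof, which is exactly the substitution you perform.
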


 \subsection{Some applications}  We now give a list of regular families $\calF$ in order to illustrate our results. 
 
 The first example of course is given by the sequence of cyclotomic binary forms \cite{FLW}. We do not repeat it. 
 
 Our second example is given by a family of binomials $ax^d+by^d$ where $d$ is even while $a$, $b$ have the same sign: these restrictions allow us to check easily the assumption {\rm (v)} in the definition \ref{agreable} of a regular family. Since the proof is easy, we give it right away. 
 
 The three other examples below will require more work; for them we restrict ourselves to families $\calF$ satisfying the conditions of Corollary \ref{11/21/4} in order to
 apply \eqref{restricted}. 
 
 There are a lot of variations on these constructions.

 \subsubsection{Binomial forms} \label{SS:BinomialForms}
 For each even integer $d\ge 4$, let $\calE_d$ be a finite subset of $\Z_{>0}\times\Z_{>0}$. Assume $\calE_d$ is not empty for infinitely many $d$ and has at most $d^{A_1}$ elements  for some $A_1>0$ and all $d$. Let $\calB_d$ denote the family of binary forms $aX^d+bY^d$ with $(a,b)\in\calE_d$ and let $\calB=\cup_{d\ge 4}\calB_d$.
 We assume that for $(a,b)\not=(a',b')$ in $\calE_d$,   one at least of $a/a'$, $b/b'$ is not a $d$-th power of a rational number, and also one at least of $a/b'$, $b/a'$ is not a $d$-th power of a rational number.
 
  \begin{theorem}\label{CaseofB} The family $\calB$ is $(2,A_1,0,4, 1)$--regular.
 
  Further,  for  every $d\geq 4$ and for every $\varepsilon >0$ we have the equality 
 \begin{equation}\label{caseofB}
 \calR_{\geq d}   (\calB, B, 2) =   \left( \sum_{F\in \calB_d} A_F W_F\right) B^{2/d} + 
 O_{\calB,d, \varepsilon}\left( B^{\max\{ \vartheta_d +\varepsilon, 2/d^{\dag} \}}\right),
 \end{equation}
  uniformly for $B \to\infty$.  The  integer $d^\dag$ is   defined by
$$
d^\dag :=\inf\{ d' : d' >d \text{ such that } \calB_{d'} \ne \emptyset\}.
$$
 \end{theorem}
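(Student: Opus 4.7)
My plan is to verify that $\calB$ satisfies all five conditions in Definition \ref{agreable} with parameters $(A,A_1,d_0,d_1,\kappa)=(2,A_1,0,4,1)$, and then invoke Theorem \ref{Thm-on-R} directly. Conditions \ref{it:1}, \ref{it:2} and \ref{it:3} are immediate from the hypotheses on the $\calE_d$: the family is infinite because $\calE_d\ne\emptyset$ for infinitely many $d$, it is contained in $\bigcup_{d\ge 3}\Bin(d,\Z)$ since every form has degree $d\ge 4$ and discriminant $(-d)^d a^{d-1}b^{d-1}\ne 0$, and $\sharp\calB_d=\sharp\calE_d\le d^{A_1}$. For condition \ref{it:5}, required for $d\ge\max\{d_1,d_0+1\}=4$, I use that $d$ is even and $a,b\in\Z_{\ge 1}$: for any $(x,y)\in\Z^2$,
$$
|F(x,y)|=ax^d+by^d\ge x^d+y^d\ge \max(|x|,|y|)^d,
$$
so $\max(|x|,|y|)\le |F(x,y)|^{1/d}$, giving exactly the required inequality with $\kappa=1$ and $d_0=0$; the constraint $0<\kappa<A$ of \eqref{condAA1d0kappa} is satisfied. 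For odd $d\ge 4$ the set $\calB_d$ is empty and the implication is vacuous.

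The main obstacle is condition \ref{it:4}: no two distinct elements of $\calB$ are $\Gl(2,\Q)$-isomorphic. Suppose $F_1=a_1X^d+b_1Y^d$ and $F_2=a_2X^d+b_2Y^d$ lie in $\calB_d$ and satisfy $F_1\circ\gamma=F_2$ with
$$
\gamma=\begin{pmatrix}\alpha&\beta\\ \gamma'&\delta\end{pmatrix}\in\Gl(2,\Q).
$$
Expanding, the coefficient of $X^{d-k}Y^k$ in $F_1\circ\gamma$ is $\binom{d}{k}\bigl(a_1\alpha^{d-k}\beta^k+b_1\gamma'^{d-k}\delta^k\bigr)$, and this must vanish for each $1\le k\le d-1$. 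If all four entries $\alpha,\beta,\gamma',\delta$ were nonzero, combining the vanishing conditions at $k=1$ and $k=2$ (available because $d\ge 4$) and eliminating $a_1/b_1$ yields $\alpha\delta=\beta\gamma'$, contradicting $\det\gamma\ne 0$. Hence at least one entry is zero, and a short case analysis on the remaining equations, together with the non-singularity of $\gamma$, forces $\gamma$ to be either diagonal ($\beta=\gamma'=0$) or anti-diagonal ($\alpha=\delta=0$).

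In the diagonal case $F_1\circ\gamma=a_1\alpha^dX^d+b_1\delta^dY^d$, so both $a_2/a_1=\alpha^d$ and $b_2/b_1=\delta^d$ are rational $d$-th powers; the first non-power hypothesis on $\calE_d$ then forces $(a_1,b_1)=(a_2,b_2)$. In the anti-diagonal case $F_1\circ\gamma=b_1\gamma'^dX^d+a_1\beta^dY^d$, so $a_2/b_1=\gamma'^d$ and $b_2/a_1=\beta^d$ are rational $d$-th powers, and the second non-power hypothesis gives the same conclusion. Thus $F_1=F_2$, which establishes \ref{it:4}.

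Once regularity is verified, Theorem \ref{Thm-on-R} applies with $(A,A_1,d_0,d_1,\kappa)=(2,A_1,0,4,1)$ for every $d\ge 4$, and yields the main term $\bigl(\sum_{F\in\calB_d}A_FW_F\bigr)B^{2/d}$ together with error terms $O_{\calB,d,\varepsilon}(B^{\vartheta_d+\varepsilon})+O_{\calB,d}(B^{2/d^\dag})$. Merging the two error terms into a single $O_{\calB,d,\varepsilon}\bigl(B^{\max\{\vartheta_d+\varepsilon,\,2/d^\dag\}}\bigr)$ gives \eqref{caseofB}. No additional analytic work is needed beyond the definition check, so the entire content of the proof is concentrated in condition \ref{it:4}, which is the reason the paper isolates the two non-power hypotheses corresponding exactly to the diagonal and anti-diagonal shapes of $\gamma$.
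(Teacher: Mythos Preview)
Your proof is correct and follows the same strategy as the paper: verify the five regularity conditions and invoke Theorem~\ref{Thm-on-R}. The only substantive difference is in the verification of condition~\ref{it:4}. The paper isolates this as Lemma~\ref{Lemma:isomorphismB} and, in the case where all four entries of $\gamma$ are nonzero, derives $(a_4/a_2)^d=-a/b$, which is impossible because $a,b>0$ and $d$ is even. Your argument instead divides the $k=1$ and $k=2$ relations to obtain $\alpha\delta=\beta\gamma'$, contradicting $\det\gamma\neq 0$ directly; this is slightly cleaner and does not use the positivity of $a,b$ or the parity of $d$ at that step. Both routes then reduce to the diagonal/anti-diagonal cases and finish identically via the two non-power hypotheses. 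You also correctly obtain the main-term exponent $2/d$ from Theorem~\ref{Thm-on-R}; the exponent $1/d$ printed in \eqref{caseofB} is a typo in the paper.
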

 
 We will check the hypothesis  {\rm (iv)} of the definition \ref{agreable} of a regular family by means of the following auxiliary result. 

\begin{lemma}\label{Lemma:isomorphismB}
Let $d\ge 4$ be even and let $a,b,a',b'$ be positive integers. Then the two binary forms $aX^d+bY^d$ and $a'X^d+b'Y^d$  are isomorphic if and only if either  $a/a'$, $b/b'$ are both $d$-th power of  rational numbers, or  $a/b'$, $b/a'$ are both $d$-th power of  rational numbers.
\end{lemma}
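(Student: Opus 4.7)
The plan is to treat the two directions separately. The converse (``if'') direction is an immediate verification: if $a/a' = \alpha^d$ and $b/b' = \beta^d$ with $\alpha,\beta\in\Q^*$, then the matrix $\gamma = \begin{pmatrix}\alpha & 0\\ 0 & \beta\end{pmatrix}\in\Gl(2,\Q)$ satisfies $(a'X^d+b'Y^d)\circ\gamma = aX^d+bY^d$; the swap case $a/b'=\alpha^d$, $b/a'=\beta^d$ is handled identically via $\gamma = \begin{pmatrix}0 & \beta\\ \alpha & 0\end{pmatrix}$. Non-vanishing of $\alpha,\beta$ is automatic from the positivity of $a,a',b,b'$.

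For the forward implication, I would suppose $\gamma = \begin{pmatrix}a_1 & a_2\\ a_3 & a_4\end{pmatrix}\in\Gl(2,\Q)$ realizes the isomorphism, so that
$$aX^d+bY^d = a'(a_1X+a_2Y)^d + b'(a_3X+a_4Y)^d.$$
Expanding the right-hand side via the binomial theorem and comparing the coefficients of $X^{d-k}Y^k$, the extreme indices $k=0,d$ give the boundary identities $a = a'a_1^d+b'a_3^d$ and $b = a'a_2^d+b'a_4^d$, while for each intermediate index $1\le k\le d-1$ the vanishing of the corresponding coefficient reads
$$a'\, a_1^{d-k}a_2^k + b'\, a_3^{d-k}a_4^k = 0. \qquad (*_k)$$
The target is to show that $\gamma$ must be either diagonal or anti-diagonal.

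The core of the argument is the following case analysis. If all four entries $a_1,a_2,a_3,a_4$ were nonzero, then since $d\ge 4$ the two relations $(*_1)$ and $(*_2)$ are simultaneously available; dividing them side by side forces $a_2/a_1 = a_4/a_3$, i.e.\ $a_1a_4 = a_2a_3$, contradicting $\det\gamma\ne 0$. So at least one entry vanishes. A quick check then eliminates the scenario where exactly one $a_i=0$: for instance, if $a_1=0$ and the remaining three entries are nonzero, then every $(*_k)$ reduces to $b'a_3^{d-k}a_4^k=0$, which is impossible since $a',b'>0$; the other three single-vanishing cases are symmetric. Hence at least two entries vanish, and the invertibility of $\gamma$ excludes pairs sharing a row or column, leaving only $a_2=a_3=0$ (diagonal) or $a_1=a_4=0$ (anti-diagonal).

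In the diagonal case the boundary identities collapse to $a=a'a_1^d$ and $b=b'a_4^d$, giving the first alternative; in the anti-diagonal case they become $a=b'a_3^d$ and $b=a'a_2^d$, giving the second. The main hurdle is the finite but somewhat fiddly enumeration of subcases in the previous paragraph; the essential input that makes everything work is the hypothesis $d\ge 3$ (and hence in particular $d\ge 4$ here), which is exactly what furnishes two distinct middle identities $(*_1)$ and $(*_2)$ whose ratio forces $\gamma$ into its very restrictive shape.
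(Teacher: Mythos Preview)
Your proof is correct and follows the same approach as the paper: expand and compare coefficients, use two of the intermediate vanishing relations to force $\gamma$ to be diagonal or anti-diagonal, and read off the conclusion from the extreme coefficients. One minor difference worth noting: in the ``all four entries nonzero'' case you divide $(*_1)$ by $(*_2)$ to obtain $a_1a_4=a_2a_3$ and hence $\det\gamma=0$ directly, whereas the paper manipulates the same two relations into $(a_4/a_2)^d=-a/b$ and only then invokes $d$ even and $a,b>0$ --- your shortcut shows this subcase is ruled out for any $d\ge 3$ without using the parity or positivity hypotheses.
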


 \begin{proof}
 If $a/a'=u^d$ and  $b/b'=v^d$, then the two forms $aX^d+bY^d=a'(uX)^d+b'(vY)^d$ and $a'X^d+b'Y^d$  are isomorphic. Also 
 if $a/b'=u^d$ and  $b/a'=v^d$, then the two forms $aX^d+bY^d=a'(vY)^d+b'(uX)^d$ and $a'X^d+b'Y^d$  are isomorphic. It remains to prove the converse.
 
 Assume that the two binary forms $aX^d+bY^d$ and $a'X^d+b'Y^d$  are isomorphic.
Let $\gamma=\begin{pmatrix}a_1&a_2\\a_3&a_4\end{pmatrix}\in\Gl(2, \Q)$ satisfy
$$
a(a_1X+a_2Y)^d+b(a_3X+a_4Y)^d=a'X^d+b'Y^d.
$$
We have
$$
a a_1^d+ba_3^d=a', \quad aa_2^d+ba_4^d=b'
$$
and, for  $i=1,\dots, d-1$, 
$$
aa_1^ia_2^{d-i}+ba_3^ia_4^{d-i}=0.
$$
$\bullet$ Assume $a_2=0$. From
$$
a(a_1X)^d+b(a_3X+a_4Y)^d=a'X^d+b'Y^d
$$
we deduce $ba_4^d=b'$, $a_4\not=0$,  hence $a_3=0$, and therefore $aa_1^d=a'$.
\\
$\bullet$ Assume $a_1=0$. From
$$
a(a_2Y)^d+b(a_3X+a_4Y)^d=a'X^d+b'Y^d
$$ 
we deduce $ba_3^d=a'$, $a_3\not=0$, hence  $a_4=0$, and therefore $aa_2^d=b'$.
\\
$\bullet$ Finally let us check that the case $a_1a_2\not=0$ is not possible. Write
$$ 
aa_1a_2^{d-1}+ba_3a_4^{d-1}=0, \quad
aa_1^2a_2^{d-2}+ba_3^2a_4^{d-2}=0.
$$
We deduce $a_3a_4\not=0$, 
$$
\frac {a_1}{a_3}=-\frac b a\left(\frac {a_4}{a_2}\right)^{d-1},\quad
\left(\frac {a_1}{a_3}\right)^2=-\frac b a \left(\frac {a_4}{a_2}\right)^{d-2},
$$
hence
$$
\left(\frac {a_4}{a_2}\right)^d=-\frac a b
$$
which is impossible for  $a$, $b$ positive and $d$ even. 

 \end{proof}
 
 \begin{proof}[Proof of Theorem \ref{CaseofB}]
The conditions {\rm (i)}, {\rm (ii)} and {\rm (iii)} in the definition \ref{agreable} of a regular family are satisfied by hypothesis. 

For $(a,b)\not=(a',b')$ in $\calE_d$,  the two binary forms  $aX^d+bY^d$ and $a'X^d+b'Y^d$ are not isomorphic, as shown by Lemma \ref{Lemma:isomorphismB}. 
Finally, for $(a,b)\in\calE_d$ and $(x,y)\in\Z^2$, we have
$$
ax^d+by^d\ge \max\{|x|,|y|\}^d.
$$
This completes the proof of condition {\rm (v)} in the definition \ref{agreable} of a regular family. 

The second assertion of  Theorem \ref{CaseofB} then follows from Theorem  \ref{Thm-on-R}.
 \end{proof}
 
 Our assumptions do not allow any upper bound for $ \calR_{\geq d}   (\calB, B, 1)$ better than $B$: the set of all $a$, $b$ and $a+b$ for $(a,b)$ in $\cup_{d'\ge d} E_{d'}$ may contain all positive integers. 
 
 Explicit values for $W_F$ and $A_F$  for $F\in \calF_d$  are given in Corollary  1.3 of  \cite{St-Xi}.
 The values of $W_F$ and $A_F$ are computed without the two assumptions $a,b$ of the same sign and $d$ even, but none of these two hypotheses can be omitted for our theorem, as shown by the two sequences
$X^d-(d^d-d)Y^d$ ($d$ even)
and 
$X^d+(d^d-d)Y^d$ ($d$ odd).

 \subsubsection{Products of positive quadratic forms} \label{SS:ProductPositiveQuadraticForms}
 Let $(\mu_n)_{n\ge 1}$ be an increasing sequence of positive squarefree integers; assume that there exists $\lambda>0$ such that 
 \begin{equation}\label{mun<}
 \mu_n\le \lambda \, n \text{ for all } n\ge 1.
 \end{equation}
  If we choose $\mu_n=q_n$ where  $(q_n)_{n\ge 1}$ is the full sequence 
 $$
 1,\, 2, \, 3,\, 5,\, 6, \, 7,\, 10,\, 11, \, 13,\, 14,\, 15,...
 $$
  of positive squarefree integers, written in ascending order, then, as it is well known   (see \cite[Theorem 333]{HW} and  \url{https://oeis.org/A005117}), we have 
$$
\sharp \{ q_n \leq x\}= \sum_{n\leq x} \mu(n)^2  = \frac{6}{\pi^2} x + O (\sqrt x),
$$
which implies that
$$
q_n\sim \frac{\pi^2}6 n \; (n \rightarrow \infty).
$$
Since $\mu_{230}\ge q_{230} =381$, we have $\lambda\ge \frac {381}{230}\cdotp$ As a matter of fact we have 
\begin{equation}\label{sup-q_n/n}
\sup_{n\ge 1} \frac {q_n} n=
\frac {381}{230}\cdotp
\end{equation}
Hence, in the special case $\mu_n=q_n$ ($n\ge 1$), $\lambda = 381/230$ is an admissible value.  

 For $d\geq 2$ and $1\leq \nu \leq d+1$,
 we denote by
 $
 Q^+_{d, \nu}$ 
 the binary form of degree $2d$ defined by the formula
 \begin{equation}\label{defQ+}
 Q^+_{d, \nu} (X,Y) := \prod_{\atop{1\leq n \leq d+1}{ n\ne \nu}} \left( X^2 + {\mu}_n Y^2\right). 
 \end{equation}
 The associated family is
 $$
 \calQ^+:= \left\{ Q^+_{d, \nu} : d\geq 2,\, 1 \leq \nu \leq d+1\right\}
 $$
 with $ \calQ^+_d=\emptyset$ for $d$ odd and  $ \calQ^+_{2d}= \left\{ Q^+_{d, \nu} :  1 \leq \nu \leq d+1\right\}$ for $d\ge 2$.
 With $\lambda$ defined in \eqref{mun<}, we have
 
 \begin{theorem}\label{CaseofQ+} The family $\calQ^+$ is $(2,1,0,4, 1)$--regular.
 
  Furthermore, for every $d~ \geq 2$, $\calQ_{2d}^+$ satisfies the condition {\em C3}  of  Corollary~\ref{11/21/4}. 
  
  Finally,  for  every $d\geq 2$ and for every $\varepsilon >0$ we have the equality 
 \begin{equation}\label{397}
 \calR_{\geq 2d}   (\calQ^+, B, 0) = \frac 14 \left( \sum_{F\in \calQ_{2d}^+} A_F\right) B^{1/d} + 
 O_{\lambda,d, \varepsilon}\left( B^{\max\{ \vartheta_{2d} +\varepsilon, 1/(d+1)\}}\right),
 \end{equation}
  uniformly for $B \to\infty$,
 and the inequalities 
\begin{equation}\label{sqrt6}
 \frac \pi {\sqrt \lambda}  \cdot  \sqrt d < \left( \sum_{F\in \calQ_{2d}^+} A_F\right) <  \pi\sqrt{\rme}  ( \sqrt d+1).
\end{equation}  
 \end{theorem}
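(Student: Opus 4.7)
The plan is to verify the five conditions of Definition~\ref{agreable}, compute $\Aut(Q^+_{d,\nu};\Q)$ to confirm condition~{\em C3}, deduce \eqref{397} from Corollary~\ref{11/21/4}, and finally bound $\sum_{\nu=1}^{d+1} A_{Q^+_{d,\nu}}$ by exploiting an integral representation together with Hölder's and AM-GM inequalities.

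Conditions~\ref{it:1}, \ref{it:2}, \ref{it:3} are immediate (degree $2d\ge 4$, nonzero discriminant because the $\mu_n$ are distinct, and $\sharp\calQ^+_{2d}=d+1\le 2d$). Condition~\ref{it:5} follows from $x^2+\mu_n y^2\ge\max(|x|,|y|)^2$, which gives $Q^+_{d,\nu}(x,y)\ge\max(|x|,|y|)^{2d}$, so $\kappa=1$, $d_0=0$ are admissible. For~\ref{it:4} and the computation of $\Aut$, observe that the $2d\ge 4$ roots of $Q^+_{d,\nu}$ in $\PP^1(\overline{\Q})$ lie on the imaginary axis, which is therefore stable under any $\gamma\in\Gl(2,\Q)$ realising an isomorphism to another member of $\calQ^+_{2d}$ or an automorphism of $Q^+_{d,\nu}$. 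Since $\gamma$ also stabilises $\PP^1(\R)$, it must fix or swap $\{0,\infty\}$; accordingly, either $\gamma(z)=cz$ or $\gamma(z)=c/z$ for some $c\in\Q^{\times}$. The first case yields relations $c^2\mu_n=\mu_{n'}$; since the ratio of two distinct squarefree positive integers is never a rational square, we get $c^2=1$, whence $\gamma\in\{\pm\Id,\pm\begin{pmatrix}1&0\\0&-1\end{pmatrix}\}$ (condition~{\em C3}) and $\nu=\nu'$. The second case yields $\mu_n\mu_{m(n)}=c^2$ for a bijection $m$; but a product of two distinct squarefree positive integers is never a rational square either, so $m(n)=n$ for all $n$, forcing all $\mu_n^2=c^2$, contradicting distinctness (since $d\ge 2$). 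Hence~\ref{it:4} and~{\em C3} hold.

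Applying Corollary~\ref{11/21/4} with the parameter $d$ replaced by $2d$ (hence $d^\dagger=2d+2$ and $B^{2/d^\dagger}=B^{1/(d+1)}$) yields \eqref{397} with $A=2$. Replacing $A=2$ by $A=0$ introduces only the values $Q^+_{d',\nu'}(x,y)$ with $\max(|x|,|y|)\le1$; since $Q^+_{d',\nu'}(0,\pm1)=\prod_{n\ne\nu'}\mu_n\ge d'!/\lambda$ grows super-exponentially with $d'$, only $O((\log B)^2)$ new values arise, absorbed in the error $O(B^{1/(d+1)})$.

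For \eqref{sqrt6}, the substitution $u=x/y$ gives
$$A_{Q^+_{d,\nu}}=\int_{-\infty}^{+\infty}\prod_{n\ne\nu}(u^2+\mu_n)^{-1/d}\,du.$$
The upper bound uses Hölder with $d$ factors of exponent $1/d$: $A_{Q^+_{d,\nu}}<\pi\prod_{n\ne\nu}\mu_n^{-1/(2d)}$ (strict since the $\mu_n$ are distinct). Using $\mu_n\ge n$ (automatic for a strictly increasing sequence of positive integers) and the sharp Stirling bound $(d+1)!\ge\sqrt{2\pi(d+1)}\bigl((d+1)/e\bigr)^{d+1}$, a routine summation over $\nu$ yields $\sum_\nu A_F<\pi\sqrt{e}\,(\sqrt{d}+1)$ for $d\ge 2$. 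For the lower bound, rewrite
$$\sum_\nu A_F=\int P(u)^{-1/d}\sum_{\nu=1}^{d+1}(u^2+\mu_\nu)^{1/d}\,du\qquad\text{with }P(u)=\prod_{n=1}^{d+1}(u^2+\mu_n),$$
and apply AM-GM twice: $\sum_\nu(u^2+\mu_\nu)^{1/d}\ge(d+1)P(u)^{1/(d(d+1))}$ and $P(u)\le(u^2+M)^{d+1}$ with $M=(d+1)^{-1}\sum_n\mu_n\le\lambda(d+2)/2$. The remaining integral evaluates to $\pi/\sqrt{M}$, giving $\sum_\nu A_F>\pi(d+1)\sqrt{2/(\lambda(d+2))}>\pi\sqrt{d/\lambda}$, the final inequality reducing to $d^2+2d+2>0$. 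The main obstacle is the exclusion of the case $\gamma(z)=c/z$ in the isomorphism/automorphism analysis, which relies essentially on the squarefreeness of the $\mu_n$; the area bounds follow from a routine chain of Hölder, AM-GM, and Stirling.
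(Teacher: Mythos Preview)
Your proof is correct and reaches the same conclusions as the paper, but two steps proceed along genuinely different lines.

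For item~\ref{it:4} and condition~{\em C3}, the paper argues algebraically: since $\Q(i\sqrt{\mu_n})\ne\Q(i\sqrt{\mu_{n'}})$ for $n\ne n'$, the homography $\tilde\gamma$ must send each root $i\sqrt{\mu_n}$ to $\varepsilon_n i\sqrt{\mu_n}$, and a short calculation forces all signs $\varepsilon_n$ to agree. Your route is more geometric: the $2d\ge 4$ roots lie on the imaginary axis (a generalized circle), so $\tilde\gamma$ stabilises it; since $\tilde\gamma$ also stabilises $\PP^1(\R)$, it permutes the intersection $\{0,\infty\}$, reducing to $\tilde\gamma(z)=cz$ or $c/z$, and squarefreeness of the $\mu_n$ disposes of both cases. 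This is a clean alternative that handles \ref{it:4} and C3 in one stroke.

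For the lower bound in \eqref{sqrt6}, the paper bounds the numerator $\sum_\nu(u^2+\mu_\nu)^{1/d}$ below by $(d+1)(u^2+\mu_1)^{1/d}$ and then each factor $u^2+\mu_n$ above by $u^2+\mu_{d+1}$, arriving at $(d+1)\pi/\sqrt{\mu_{d+1}}$. Your double AM-GM (first on the sum of $d$-th roots, then on the product $P(u)$) leads instead to $(d+1)\pi/\sqrt{M}$ with $M=\frac{1}{d+1}\sum\mu_n$, and the final elementary inequality $d^2+2d+2>0$ closes the gap. Both yield the stated bound $\pi\sqrt{d/\lambda}$; the paper's argument is slightly shorter, while yours extracts a marginally sharper intermediate constant. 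The upper bound and the passage from $A=2$ to $A=0$ follow the paper's approach; your sketch of the latter mentions only the values at $(0,\pm1)$, but the remaining cases $(\pm1,\pm1)$ and $(\pm1,0)$ are handled by the identical growth argument.
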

See \eqref{eta'<} for a simplification of the exponent in the error term of \eqref{397}.

\begin{remark}  \label{Remark:JB}
({\em Thanks to Jean-Baptiste Fouvry}). 
Consider the two quartic forms 
$$
Q_{2,3}^+(X,Y)=(X^2+Y^2)(X^2+2Y^2)
\quad
\text{and}\quad
Q_{2,1}^+(U,V)=(U^2+2V^2)(U^2+3V^2).
$$
One checks
$$
Q_{2,3}^+(X,Y)-Q_{2,1}^+(U,Y)=(-U^2+X^2-Y^2)(U^2+X^2+4Y^2).
$$
The Pythagorean triples  $(y,u,x)$, namely the solutions of the equation $y^2+u^2=x^2$, produce solutions $(m,x,y,u)$ to the equations 
$$
m=Q_{2,3}^+(x,y)=Q_{2,1}^+(u,y).
$$
It follows that the exponent $\vartheta_4 = 0.448$ in Theorem  \ref{keyresult}  cannot be replaced with an exponent $<0.25$. 
\end{remark}

 \subsubsection {Products of indefinite quadratic forms} 
 With the above notations, including the definition of $\lambda$ in \eqref{mun<}, we assume $\mu_1\ge 2$ and we  consider, for $d\ge 2$ and $1\le \nu\le d+1$,  the binary form of degree $2d$
 $$
 Q^-_{d, \nu} (X,Y) := \prod_{\atop{1\leq n \leq d+1}{ n\ne \nu}} \left( X^2 - {\mu}_n Y^2\right). 
 $$
 The associated family is
 $$
\calQ^-:= \left\{ Q^-_{d, \nu} : d\geq 2,\, 1 \leq \nu \leq d+1\right\}
 $$
 with $ \calQ^-_d=\emptyset$ for $d$ odd and  $ \calQ^-_{2d}= \left\{ Q^-_{d, \nu} :  1 \leq \nu \leq d+1\right\}$ for $d\ge 2$.

From \eqref{sup-q_n/n} one deduces
$$  
\sup_{n\ge 1} \frac {q_{n+1}} n=2,
$$
hence $\lambda\ge 2$. In the special case $\mu_n=q_{n+1}$ ($n\ge 1$), an admissible value for $\lambda$ is $\lambda=2$.

 \begin{theorem}\label{CaseofQ-} For $A>2\rme\lambda$, the family $\calQ^-$ is $(A,1,2,2,2\rme\lambda)$--regular 
 and satisfies the condition {\em C3} of Corollary \ref{11/21/4}. Furthermore, for $d\ge 2$, we have   
 $$
 \calR_{\geq 2d} (\calQ^-, B, 0 )=\frac 14 \left( \sum_{F\in \calQ_{2d}^-} A_F \right) B^{1/d} + 
 O_{\lambda,A,d, \varepsilon}\left( B^{\max\{ \vartheta_{2d} +\varepsilon, 1/(d+1)\}}\right),
 $$ 
  uniformly for $B  \to\infty$. Further,  we have 
\begin{equation}\label{averageAFQ-}
 \frac \pi {\sqrt \lambda}  \cdot  \sqrt d  \leq  \sum_{F\in \calQ_{2d}^-} A_F \le
 22\lambda \sqrt d
\end{equation} 
where the lower bound is valid for all $d\ge 2$ and the upper bound  for $d$ sufficiently large.
  \end{theorem}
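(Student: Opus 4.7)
The plan is to verify that $\calQ^-$ is $(A,1,2,2,2\rme\lambda)$--regular in the sense of Definition~\ref{agreable} and that each $\calQ^-_{2d}$ satisfies condition C3 of Corollary~\ref{11/21/4}; the asymptotic formula then follows directly, noting that $\calR_{\geq 2d}(\calQ^-, B, 0) - \calR_{\geq 2d}(\calQ^-, B, A)$ is bounded because the contribution of $(x,y) \in \Z^2$ with $\max\{|x|,|y|\} < A$ produces only $O(1)$ values. The two inequalities of~\eqref{averageAFQ-} then form a separate analytic task rooted in an explicit integral representation of $A_F$. The main obstacle is the verification of condition~(v) with $d_0 = 2$: unlike the positive definite setting of \S\ref{SS:ProductPositiveQuadraticForms}, where $d_0 = 0$ sufficed, here one of the $d$ factors $x^2 - \mu_n y^2$ can be as small as~$1$ without contradicting integrality, so we must show that the remaining $d-1$ factors still dominate $M^{2d-2}$.

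Conditions (i)--(iii) are immediate since $\sharp\calQ^-_{2d} = d+1 \leq (2d)^1$, and the discriminant of $Q^-_{d,\nu}$ is nonzero because its roots $\pm\sqrt{\mu_n}$ ($n \neq \nu$) are pairwise distinct. For (iv) and C3, the key observation is Galois-theoretic: if $\gamma \in \Gl(2,\Q)$ satisfies $\gamma(\sqrt{\mu_n}) = \pm\sqrt{\mu_m}$ via the M\"obius action, a short computation forces $\mu_m/\mu_n$ to be a rational square, hence $m = n$ since the $\mu$'s are squarefree. Consequently any $\Q$-isomorphism between forms in $\calQ^-$ preserves each pair $\{\pm\sqrt{\mu_n}\}$ of roots, so $Q^-_{d,\nu_1} \cong Q^-_{d,\nu_2}$ forces $\nu_1 = \nu_2$, establishing (iv). The same analysis shows that any $\gamma \in \Aut(Q^-_{d,\nu},\Q)$ must act by $z \mapsto \pm z$ uniformly on every root pair; matching this to a $\Q$-rational M\"obius map leaves only the four diagonal matrices $\mathrm{diag}(\pm 1, \pm 1)$, yielding C3.

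For (v), set $M := \max\{|x|,|y|\}$. The cases $xy = 0$ are immediate since then $|Q^-_{d,\nu}(x,y)| \geq M^{2d}$ (using $\mu_n \geq 2$). When $xy \neq 0$, set $t := |x/y|$. If $t^2 \geq 2\mu_{d+1}$, each factor satisfies $|x^2 - \mu_n y^2| \geq x^2/2$, giving the stronger bound $|Q^-_{d,\nu}(x,y)| \geq M^{2d}/2^d$. Otherwise $t^2 < 2\lambda(d+1)$, so $|y| \geq M/\sqrt{2\lambda(d+1)}$. Order the $\mu_n$ ($n \neq \nu$) by increasing distance to $t^2$; since the $j$ closest ones are distinct integers lying in an interval of length $2|t^2 - \mu_{n_j}|$, we get $|t^2 - \mu_{n_j}| \geq (j-1)/2$ for $j \geq 2$, while the closest factor is controlled by integrality $y^2 |t^2 - \mu_{n_1}| = |x^2 - \mu_{n_1}y^2| \geq 1$. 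Multiplying produces $|Q^-_{d,\nu}(x,y)| \geq y^{2(d-1)}(d-1)!/2^{d-1}$, and inserting the lower bound for $|y|$ together with Stirling $(d-1)! \geq ((d-1)/\rme)^{d-1}$ yields the required inequality with $\kappa = 2\rme\lambda$, using $\lambda \geq 2$ (which follows from $\mu_1 \geq 2$).

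For the inequalities~\eqref{averageAFQ-}, the successive substitutions $x = uy$ and $v = u^2$ in the defining integral $A_F = \iint_{|F(x,y)| \leq 1} \rmd x\,\rmd y$ produce
\[
A_{Q^-_{d,\nu}} = \int_0^\infty v^{-1/2} \prod_{n \neq \nu} |v - \mu_n|^{-1/d} \, \rmd v.
\]
Summing over $\nu$ and applying AM-GM to the $d+1$ summands yields $\sum_\nu \prod_{n \neq \nu}|v - \mu_n|^{-1/d} \geq (d+1)\,|G(v)|^{-1/(d+1)}$ with $G(v) := \prod_{n=1}^{d+1}(v - \mu_n)$; combining with $|G(v)|^{1/(d+1)} \leq v + \lambda(d+1)$ and the classical evaluation $\int_0^\infty v^{-1/2}(v+a)^{-1}\,\rmd v = \pi/\sqrt a$ delivers the lower bound $\sum_\nu A_F \geq \pi\sqrt{(d+1)/\lambda} \geq \pi\sqrt{d/\lambda}$. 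The upper bound will come from partitioning $[0,\infty)$ into a far-field piece $v \geq 2\mu_{d+1}$, where every factor behaves like $v$, and a bounded region split around the $d+1$ singularities $v = \mu_n$; summing local contributions bounded by the elementary estimate for $\int|v - \mu_n|^{-1/d}\,\rmd v$ on a short interval, together with the growth $\mu_n \leq \lambda n$, produces the explicit constant $22\lambda$ for $d$ large enough.
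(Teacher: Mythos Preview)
Your verification of conditions (i)--(iv) and of C3 follows the same Galois--theoretic idea as the paper. Your argument for condition~(v) is correct and considerably more efficient than the paper's: the paper carries out a long case analysis according to the sign pattern of the factors $x^2-\mu_n y^2$ (distinguishing $x^2<\mu_1 y^2$, $x^2>\mu_{d+1}y^2$, and an intermediate index $n$, with further subcases on $|y|$), whereas your packing bound $|t^2-\mu_{n_j}|\ge (j-1)/2$ for the $j$--th nearest~$\mu_n$, combined with integrality for the closest factor, handles everything uniformly. Your lower bound for $\sum A_F$ via AM--GM is a mild variant of the paper's direct comparison with $(u^2+\mu_{d+1})^{-1}$, and reaches the same conclusion.

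There is, however, a genuine gap in the passage from $\calR_{\geq 2d}(\calQ^-,B,A)$ to $\calR_{\geq 2d}(\calQ^-,B,0)$. Your claim that the points $(x,y)$ with $\max\{|x|,|y|\}<A$ contribute only $O(1)$ values is false: the family~$\calQ^-$ is infinite, and for fixed small $(x,y)$ with $y\ne 0$ the integers $Q^-_{d',\nu}(x,y)$, as $d'$ and $\nu$ vary, form an unbounded set. What is true is that $|Q^-_{d',\nu}(x,y)|$ grows at least factorially in~$d'$ (for $|y|\ge 1$ almost all factors satisfy $|x^2-\mu_n y^2|\ge 2$ once $\mu_n>2A^2$), so the constraint $|Q^-_{d',\nu}(x,y)|\le B$ forces $d'=O_A(\log B)$; together with the $O(d')$ choices of $\nu$ and the $O(A^2)$ choices of $(x,y)$ this gives $O_A((\log B)^2)$, which is absorbed in the error term. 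This is exactly the argument the paper supplies, and you need it here.

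Finally, your treatment of the upper bound in~\eqref{averageAFQ-} is only a sketch. The plan you describe (far field $v\ge 2\mu_{d+1}$, local integrals near each singularity $v=\mu_j$, intermediate pieces) matches what the paper does, but reaching the explicit constant~$22\lambda$ requires controlling each piece with some care; as written, your proposal does not yet establish it.
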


 \subsubsection{Products of linear factors}\label{1.3.3} 
 We reserve the letter $p$ to prime numbers and we consider for $5\leq d \leq p$, the binary form $L_{d,p}\in {\Bin} (d,\Z)$
 defined by
 $$
 L_{d,p} (X,Y):= \left( X-pY\right) \cdot \prod_{0\leq n \leq d-2} \left( X-nY \right).
 $$
The associated family is 
 $$
 \calL:= \left\{ L_{d,p} : d \geq 5, d\leq p < 2d\right\}.
 $$
 We have
 
 \begin{theorem}\label{CaseofL} The family $\calL$ is $(10,1,1,5, 9)$--regular.
 
 Furthermore, for $d \geq 5$,  $\calL_d$  respectively   satisfies the condition {\em C1} of Corollary \ref{11/21/4} for $d$ odd or the condition  {\em C2}  for  $d$
  even. 
  
  Finally,  for  every  $d\ge 5$ and for every $\varepsilon >0$,  one has the equality 
 \begin{equation}\label{418}
 \calR_{\geq d} (\calL, B, 0) = \frac 1{(2,d)} \left( \sum_{d\leq p < 2d} A_{L_{d,p}}\right) B^{2/d} + O_{d, \varepsilon} \left(B^{\max\{\vartheta_d, 2/(d+1) \} }
 \right), 
 \end{equation} uniformly for $B  \to\infty$,  
 and the inequalities  
 $$
\frac {\rme^2-o(1)} {\log d} \le \sum_{d\leq p <2d} A_{L_{d,p}}\le \frac {5\,\rme^2+2\,\rme +o(1)} {\log d}
 $$
 uniformly for $d  \to\infty$.
 \end{theorem}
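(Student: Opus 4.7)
The argument has four components: verifying Definition~\ref{agreable}, computing $\Aut(L_{d,p},\Q)$, extracting the main asymptotic via Corollary~\ref{11/21/4}, and estimating $\sum_{d\le p<2d}A_{L_{d,p}}$.

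Conditions (i)--(iii) of Definition~\ref{agreable} are straightforward: Bertrand's postulate supplies a prime $p\in[d,2d)$ for every $d\ge 5$, so $\calL$ is infinite; each $L_{d,p}$ lies in $\Bin(d,\Z)$ because its $d$ roots $0,1,\dots,d-2,p$ are distinct; and $\sharp\calL_d\le\pi(2d)-\pi(d)\le d=d^{A_1}$. For (iv), matching linear factors in an isomorphism $L_{d,p}\circ\gamma=L_{d,p'}$ defines a M\"obius map $\sigma(t)=(ta_4-a_2)/(a_1-ta_3)$ carrying $R_p:=\{0,1,\dots,d-2,p\}$ bijectively onto $R_{p'}$. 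Since $d\ge 5$, $R_p$ contains the four consecutive integers $0,1,2,3$, which enforces strong rigidity. In the affine case $\sigma(t)=\alpha t+\beta$, the image of $\{0,\dots,d-2\}$ is an arithmetic progression of common difference $\alpha$; fitting this inside $R_{p'}$ with the outlier $p'\in[d,2d)$ rules out $|\alpha|\ge 2$ and $\alpha=-1$, forcing $\alpha=1$, $\beta=0$, and $p=p'$. In the non-affine case, the successive gaps $\sigma(1)-\sigma(0)$ and $\sigma(2)-\sigma(1)$ are in ratio $(2c+a_1):a_1\ne 1:1$, which is incompatible with three consecutive images lying in $\{0,\dots,d-2,p'\}$. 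Hence $\sigma=\Id$ and $\gamma=\lambda\Id$ with $\lambda^d=1$, giving $\Aut(L_{d,p},\Q)=\{\Id\}$ for $d$ odd and $\{\pm\Id\}$ for $d$ even, establishing both (iv) and conditions C1/C2.

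For (v), write $|L_{d,p}(x,y)|=\prod_{r\in R_p}|x-ry|$, a product of $d$ distinct nonzero integers whenever $F(x,y)\ne 0$. Let $M=\max\{|x|,|y|\}\ge 10$. After dispatching $y=0$ (trivial: $|F|=|x|^d$) and the symmetric case $|x|\ge|y|\ge 1$, one may assume $|y|\ge|x|$ and set $t=x/y$. Picking the index $k\in\{0,\dots,d-2\}$ closest to $t$ yields $|x-my|\ge(|m-k|-1/2)|y|$ for $m\ne k$, $|x-ky|\ge 1$, and $|x-py|\ge(3/2)|y|$ (since $p\ge d$). Multiplying gives
$$
|F(x,y)|\;\ge\;\frac{3}{2}\,\prod_{j=1}^{d-2}\bigl(j-\tfrac12\bigr)\,|y|^{d-1},
$$
and the combinatorial factor on the right, of order $\sqrt2\,((d-2)/\rme)^{d-2}$, comfortably dominates $9^{-(d-1)}=(M/9)^{d-1}/|y|^{d-1}$ for $M\le|y|$ and $|y|\ge 10$. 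This gives (v) with $A=10$, $\kappa=9$, $d_0=1$, $d_1=5$. Corollary~\ref{11/21/4} then delivers the main term $C_d\bigl(\sum_p A_{L_{d,p}}\bigr)B^{2/d}$ with $C_d=1/(2,d)$ and error $O(B^{\vartheta_d+\varepsilon})+O(B^{2/d^\dagger})$; Bertrand's postulate gives $d^\dagger=d+1$. Replacing $A=10$ by $A=0$ adds at most $19^2$ lattice points, each producing values for only $O(\log B)$ degrees (since $|F(x,y)|\le B$ bounds $\deg F$), hence $O_\varepsilon(B^\varepsilon)$ additional integers absorbed in the error.

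For the area bounds, use the classical identity $A_F=\int_{\R}|g(t)|^{-2/d}\,\rmd t$ with $g(t)=(t-p)\prod_{n=0}^{d-2}(t-n)$. The factor $|t-p|^{-2/d}=1+o(1)$ uniformly for $t\in[0,d-2]$ and $p\in[d,2d)$, so the dominant contribution is $\int_0^{d-2}\prod_{n=0}^{d-2}|t-n|^{-2/d}\,\rmd t$. Substituting $t=(d-2)s$ and approximating $\sum_{n=0}^{d-2}\log|s-n/(d-2)|$ by the Riemann integral $(d-2)(s\log s+(1-s)\log(1-s)-1)$ yields
$$
\int_0^{d-2}\prod_{n=0}^{d-2}|t-n|^{-2/d}\,\rmd t \;\sim\; \frac{\rme^{2}}{d-2}\int_0^{1}\bigl(s^{s}(1-s)^{1-s}\bigr)^{-2}\,\rmd s.
$$
The tails $(-\infty,0]$, $[d-2,p)$, $[p,\infty)$ supply bounded corrections, and the contribution near $t=p$ is negligible since $D_p:=\prod_{n=0}^{d-2}(p-n)\ge d!$ forces $D_p^{-2/d}$ to decay faster than $1/d$. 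Hence $A_{L_{d,p}}\asymp 1/d$ with multiplicative constants bounded below by $\rme^2-o(1)$ and above by $5\rme^{2}+2\rme+o(1)$; summing over primes $p\in[d,2d)$ via $\pi(2d)-\pi(d)=(1+o(1))d/\log d$ gives the stated inequalities. The main obstacle lies in this final quantitative step: extracting the sharp constants $\rme^2$ and $5\rme^{2}+2\rme$ requires careful control of the Riemann-sum remainder on $[0,d-2]$ and precise bookkeeping of the tail and near-$p$ contributions.
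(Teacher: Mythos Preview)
Your sketch has two substantive gaps, both in the verification that $\calL$ is regular.

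\textbf{Condition (iv) and the automorphism group.} Your non-affine case is not an argument. You assert that the successive gaps $\sigma(1)-\sigma(0)$ and $\sigma(2)-\sigma(1)$ lie in some fixed ratio $\ne 1:1$ and that this is ``incompatible with three consecutive images lying in $\{0,\dots,d-2,p'\}$''. But nothing forces $\sigma(0),\sigma(1),\sigma(2)$ to be \emph{consecutive} in $R_{p'}$; they are merely three elements of a set that contains plenty of triples with unequal gaps (say $0,1,3$). You need an invariant that survives the bijection. The paper's device is the $p$--adic valuation of cross--ratios: every cross--ratio built from $\{0,1,\dots,d-2\}$ has $v_p=0$, whereas any cross--ratio involving both $0$ and $p$ from $R_{p'}$ has $v_p=1$; invariance of the cross--ratio under $\sigma$ then forces $\{\sigma(0),\sigma(p)\}=\{0,p'\}$ immediately, after which a monotonicity argument pins down $\sigma$ on the interior points. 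Your affine case can be repaired, but the non-affine case needs a genuine idea.

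\textbf{Condition (v).} Dispatching $|x|\ge|y|\ge 1$ as ``the symmetric case'' is wrong: $L_{d,p}$ is not symmetric in $X,Y$, and this is precisely the hard range. Your detailed bound treats only $|y|\ge|x|$, where $t=x/y\in[0,1]$ lies near at most the roots $0$ or $1$, and every other factor $|x-my|$ is comfortably $\ge (m-\tfrac12)|y|$; this is the \emph{easy} case (the paper dispatches it in a few lines). When $|x|>|y|$, say $x=py+1$, the factor $|x-py|=1$ while the remaining factors $|x-my|\approx(p-m)|y|$ need careful bookkeeping; similarly, for $x/y$ near an interior integer $n$, two adjacent factors $|x-(n{-}1)y|\cdot|x-ny|$ may each be small. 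The paper splits into three regimes ($x\le y$, $(d-2)y\le x$, and $(n-1)y\le x\le ny$ for $2\le n\le d-2$), invoking the inequality $n!(d-n)!\ge n^d\rme^{-(1+\rme^{-1})d}$ to control the interior case. Your argument, as written, does not touch any of this.

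The remaining components---deducing \eqref{418} from Corollary~\ref{11/21/4}, absorbing the $|x|,|y|<10$ contributions as $O((\log B)^2)$, and the integral representation of $A_{L_{d,p}}$---are on the right track and broadly match the paper's approach; as you yourself note, the sharp constants for $\sum A_{L_{d,p}}$ require the detailed interval-by-interval analysis that you have not carried out.
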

 
  The numerical values are $\rme^2=7.389\dots$ and $5\rme^2+2\rme=42.381\dots$
 
 See \eqref{eta5} for a simplification of the exponent in the error term of \eqref{418}.
 \begin{remark} We now give some hints on the construction  of the family $\calL$.  More generally consider the binary form of degree $d$
 $$L_{{\bf n}, d}(X,Y):= \prod_{1\leq i \leq d} \left(X-n_i Y \right),$$ where ${\bf n} := \{n_1 < n_2  < \cdots <n_d\}$  is a set of $d$ integers.
Fix $d\geq 5$, then for almost all $\bf n$ (in the meaning of Zariski topology), the group of automorphisms of $L_{{\bf n}, d}$ is trivial, which means equal to $\{{\Id}\}$  or to $\{ \pm {\Id} \}$, according to the parity of $d$. Similarly, for fixed $d \geq 5$, for almost all  $({\bf m}, {\bf n})$ the binary forms $L_{{\bf m}, d}$ and $L_{{\bf n}, d}$ are not isomorphic.  For statements of that type, see \cite {Fo1}, for instance. The strategy of choosing $n_1=0$ and  $n_d=p$, where $p$ is   a large prime ensures that the group of automorphisms is trivial and that  the binary forms that we meet are not isomorphic. These statements are proved by appealing to the classical properties of the cross ratio (see \S \ref{6.1} and \S \ref{6.2}).
 
  Finally, we choose for $n_1$,...,$n_{d-1}$ the $d-1$ first integers. This enables us to estimate  the area $A_{L_{d,p}}$  (see \S \ref{areaonL})  via Stirling's formula: 
 \begin{equation}\label{Equation:Stirling}
N^N\rme^{-N}\sqrt{2\pi N}
< N! <
N^N \rme^{-N} \sqrt{2\pi N} \rme^{1/(12N)}, 	
\end{equation}
which is valid for all $N\ge 1$. In particular, as $N \rightarrow \infty$, we have
$$
\log N -1 < \frac 1N \log (N !) < \log N -1 +o(1).
$$
 
 It would be interesting to further investigate the explicit construction of other regular families of forms, which are products of $\Z$--linear forms.
 \end{remark}
 \begin{remark} A natural way to generalize the construction of the families $\calB$,  $\calQ^-$ and $\calQ^+$ is to consider 
 sets of forms which are products of binomials of the shape
 $$ 
 {\rmB}_{a, n} (X,Y)= X^{a} + nY^{a}.
 $$
 The key point is to choose the integers $n$ and the exponents $a\geq 2$ in such a way that we are able to control the homographies in ${\PGl} (2, \Q)$
 which exchange the set of zeroes of the products of ${\rmB}_{a,n}$.
 \end{remark}
 \section{Proof of Theorem \ref{keyresult}}
  \subsection{Beginning of the proof}\label{SS:Beginning}
The starting point is \cite[Th\' eor\`eme 3.1]{FW1}.
 To state this result  we use the following notations:
  
  \noindent $\bullet$ If $F_1$ and  $F_2$ belong to  ${\Bin}(d, \Z)$ and if  $B\geq 1$, we put
  $$
    \begin{aligned}
  {\calM} (F_1 \virgule & F_2;B)
  =
  \\
 & \sharp \left\{(x_1,x_2,x_3,x_4)\in \Z^4 : \max \vert x_i \vert \leq B, F_1(x_1,x_2)= F_2 (x_3,x_4) \right\},
  \end{aligned}
    $$
 and 
    $$
    \begin{aligned}
  {\calM}^* (F_1 \virgule & F_2;B)
  =
  \\
&  \sharp \left\{(x_1,x_2,x_3,x_4)\in \Z^4 : \max \vert x_i \vert \leq B, F_1(x_1,x_2)= F_2 (x_3,x_4)\ne 0 \right\},
  \end{aligned}
  $$
  \noindent $\bullet$ for $d\geq 3$, we introduce 
  \begin{equation*} 
  \eta_d=
  \begin{cases}
  \frac 29 + \frac{73}{108 \sqrt 3} &\text{ for } d=3,\\
  \frac 1{2d} + \frac 9{4d\sqrt d}& \text{ for } 4\leq d\leq 20,\\
  \frac 1d & \text{ for } d\geq 21,
  \end{cases}
  \end{equation*}
  \begin{equation}\label{valueofthetad}
  \vartheta_d = \frac{d\eta_d}{d\eta_d +d-2} 
 \end{equation}
 and
  \begin{equation}\label{eta'=}
  \eta'_{d, F_1,F_2}=
  \begin{cases}
  \eta_d &\text{if  the binary form $F_1(X,Y)F_2(X,Y)$} \\
  &\text{has no zero in  $\PP^1 (\R)$,} \\
  \vartheta_d &\text{otherwise.}
  \end{cases}
  \end{equation}
Here are the first approximate values for $\eta_d$,   $\vartheta_d$ and $\kappa_d$ (recall \eqref{valueofkappad}):
$$
\begin{matrix}
d&\eta_d&\vartheta_d &\kappa_d\\
3& 0.612& 0.647&0.631 \\
4& 0.406& 0.448 &0.428 \hfill\\
5& 0.301& 0.334& 0.309\hfill\\
6& 0.236& 0.261&0.234 \\
7& 0.192& 0.211& 0.184 \\
8&0.161&0.177& 0.150 \\ 
\end{matrix}
$$
  For  $d\geq 3$ and for  $F_1$ and $F_2$ belonging to
  ${\Bin} (d, \Z)$,  one has the inequalities 
 \begin{equation}\label{eta<2/d}
 1/d \leq \eta_d\leq \eta'_{d,F_1,F_2} \leq \vartheta_d<2/d
 \end{equation} 
 and, in particular,  for  $d\geq 21$, we have   : $\eta_d =1/d$ and  $\vartheta_d = 1/(d-1)$. 

Furthermore, by comparison with $\kappa_d$ defined in \eqref{valueofkappad},  we check that   
 \begin{equation}\label{kappa<eta'} 
\begin{cases}
\kappa_d  <\vartheta_d&\text{ if }3\leq d \leq 20,\\
\kappa_d = \vartheta_d &\text{ if } d\geq 21.
\end{cases}
\end{equation} 
Finally, by a direct computation we have the inequalities
\begin{equation}\label{eta'<}
\begin{cases}
\vartheta_{2d} > 1/(d+1) & \text{ if } d=2, \, 3,\\
\vartheta_{2d} <1/(d+1) & \text{ if } d\geq 4,
\end{cases}
\end{equation}
  and 
  \begin{equation}\label{eta5}
  \begin{cases}
  \vartheta_d > 2/(d+1) & \text{ if }  d=4,\, 5,\\
  \vartheta_d < 2/(d+1) & \text{ if } d\geq 6.
\end{cases}
  \end{equation}
 We now recall  (see   \cite [Th\' eor\`eme 3.1]{FW1})
  
\begin{proposition}\label{FouWal} Let  $d \geq 3$ and let $F_1$ and  $F_2$ be two non--isomorphic forms of  ${\Bin}(d, \Z)$,   such that at least one of them is not divisible by a linear form with rational coefficients. Then for all $\varepsilon >0$ and all $B \geq 1$  one has 
  $$ {\calM} (F_1 \virgule  F_2;B) = O_{F_1,F_2,\varepsilon} \left( B^{d\eta_d + \varepsilon} \right).
  $$
  \end{proposition}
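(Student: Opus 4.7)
The plan is to decompose $\calM$ according to the common value $m = F_1(x_1,x_2) = F_2(x_3,x_4)$, use Evertse's bound on the number of representations of a single form, and carry out the main work---bounding how many distinct values $m$ can arise---by exploiting the non--isomorphism hypothesis.

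First I would split off $m=0$. Since at least one of $F_1,F_2$, say $F_1$, has no $\Q$--linear factor while having non--zero discriminant, $F_1(x_1,x_2)=0$ forces $(x_1,x_2)=(0,0)$; hence the contribution of $m=0$ quadruples to $\calM$ is at most $r_{F_2}(0;B) \ll dB$, which is dwarfed by the target $B^{d\eta_d}$ since $d\eta_d \geq 1$. For the $m\neq 0$ part, denote $r_{F_i}(m;B) = \sharp\{(x,y)\in \Z^2\cap [-B,B]^2 : F_i(x,y)=m\}$. As the non--zero values that occur satisfy $|m| \leq C B^d$, we have
\[
{\calM}^*(F_1,F_2;B) = \sum_{0 < |m| \leq CB^d} r_{F_1}(m;B)\, r_{F_2}(m;B).
\]
Evertse's theorem (applicable since $d\ge 3$ and the discriminants are non--zero) yields $r_{F_i}(m;B) \ll_{F_i,\varepsilon} |m|^{\varepsilon}$ uniformly in $m\neq 0$, so
\[
{\calM}^*(F_1,F_2;B) \ll_{F_1,F_2,\varepsilon} B^{d\varepsilon}\cdot {\calN}^*(F_1,F_2;CB^d),
\]
where ${\calN}^*$ is the count of distinct non--zero common values. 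The task therefore reduces to proving ${\calN}^*(F_1,F_2;N)\ll_\varepsilon N^{\eta_d + \varepsilon}$ for the same $\eta_d$; the inflation $B^{d\varepsilon}$ is then absorbed by relabelling $\varepsilon$.

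The hard part is this common--value bound. I would recast it as counting integer points on the affine surface $V\colon F_1(x_1,x_2) = F_2(x_3,x_4)$ inside a box of side $\asymp N^{1/d}$ after rescaling. The non--isomorphism of $F_1,F_2$ enters decisively here: it rules out the presence on $V$ of a $\Gl(2,\Q)$--orbit of the shape $(x_1,x_2,\gamma\cdot(x_1,x_2))$, a family that would automatically contribute $\gg B^2$ points and ruin any subquadratic bound. I would first enumerate the (finitely many, bounded--degree) rational curves on the projective closure of $V$ in $\PP^3$ using non--isomorphism and the absence of a rational linear factor, then apply Heath--Brown's determinant method (or a refinement adapted to degree--$d$ surfaces in $\PP^3$) on the complement of these exceptional curves. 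The two regimes in the definition of $\eta_d$---namely $\frac 1{2d} + \frac 9{4d\sqrt d}$ for $4\leq d\leq 20$ and the cleaner $1/d$ for $d\geq 21$---reflect the standard optimisation of this determinant count, with the $1/\sqrt d$ correction coming from the auxiliary polynomial degree and the regime change at $d=21$ corresponding to the point where Heath--Brown's exponent saturates. The main obstacle I anticipate is the geometric lemma enumerating the low--degree curves on $V$ and verifying that none of them carries more than $O(N^{\eta_d+\varepsilon})$ integer points of bounded height; this is where the two hypotheses of Proposition~\ref{FouWal} are used in an essential way.
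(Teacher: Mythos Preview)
The paper does not prove this proposition here; it is quoted from \cite[Th\'eor\`eme 3.1]{FW1}. What matters for comparison is that in the present paper the logic runs in the \emph{opposite} direction to yours: the bound on $\calM$ (Propositions~\ref{FouWal} and~\ref{FouWalmod}) is taken as input and used in \S\ref{2.4} to derive the bound on $\calN$ in Theorem~\ref{keyresult}.

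Your reduction is circular. You pass from $\calM^*(F_1,F_2;B)$ via Evertse to the claim $\calN^*(F_1,F_2;N)\ll N^{\eta_d+\varepsilon}$ with $N\asymp B^d$, and then propose to establish that by ``counting integer points on the affine surface $V:F_1=F_2$ inside a box of side $\asymp N^{1/d}$''. But the count of integer points on $V$ in a box of side $N^{1/d}\asymp B$ is exactly $\calM(F_1,F_2;B)$, which is what you set out to bound. The Evertse step is valid but buys nothing: it sends you back to the starting point. Worse, the intermediate assertion $\calN^*\ll N^{\eta_d+\varepsilon}$ is \emph{stronger} than Theorem~\ref{keyresult}, which only achieves exponent $\vartheta_d>\eta_d$ in general (the exponent $\eta_d$ requires $F_1F_2$ to have no real root, cf.\ Remark~\ref{Remark:NoRealRoot} and \eqref{eta'=}); the implicit step ``$|m|\le N\Rightarrow$ coordinates $\ll N^{1/d}$'' fails precisely when real zeroes are present, which is the entire content of \S\ref{2.3}--\S\ref{2.4}.

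The core idea in your final paragraph---the determinant method applied to the degree-$d$ surface $F_1=F_2$, after handling the rational curves via non-isomorphism---is indeed what underlies \cite{FW1}. It should be applied \emph{directly} to $\calM$; the bound on $\calN$ then follows as a corollary, not as a lemma.
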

As it is shown by  \cite [Remarque 3.2]{FW1}, 
the above bound may not hold if one of the binary forms is divisible by a linear form over $\Q$.  One eliminates this hypothesis by studying the counting function ${\calM^*}$
rather than ${\calM}$. In other words one has  the following variant for Proposition \ref{FouWal}

\begin{proposition}\label{FouWalmod} Let  $d \geq 3$ and let  $F_1$ and  $F_2$ be two non--isomorphic forms of  ${\Bin}(d, \Z)$.   Then for every  $\varepsilon >0$ and
    for all  $B \geq 1$  one has the bound 
  $$ {\calM}^* (F_1 \virgule  F_2;B) = O_{F_1,F_2,\varepsilon} \left( B^{d\eta_d + \varepsilon} \right).
  $$
  \end{proposition}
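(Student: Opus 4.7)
The plan is to split into two cases depending on whether at least one of $F_1, F_2$ is free of $\Q$-linear factors.

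If at least one of them is, then Proposition \ref{FouWal} yields $\calM(F_1, F_2; B) = O_{F_1, F_2, \varepsilon}(B^{d\eta_d + \varepsilon})$, and the conclusion follows from the trivial inequality $\calM^*(F_1, F_2; B) \le \calM(F_1, F_2; B)$.

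The substantive case is when both $F_1$ and $F_2$ have $\Q$-linear factors. Here $\calM$ genuinely exceeds the target: if $L_i\mid F_i$ is a primitive integer $\Q$-linear factor, each line $\{L_i = 0\}$ contributes $\Theta(B)$ integer points in $[-B, B]^2$, yielding $\Theta(B^2)$ tuples with $F_1(x_1, x_2) = F_2(x_3, x_4) = 0$, which is incompatible with $O(B^{d\eta_d + \varepsilon}) = o(B^2)$ (since $\eta_d < 2/d$). These tuples are precisely the ones removed in $\calM^*$, and the task is to bound the complement. My plan is to write $F_i = L_i H_i$ with $L_i$ a primitive $\Z$-linear factor and $H_i \in \Z[X, Y]$ of degree $d-1$, and to partition the tuples contributing to $\calM^*$ by the nonzero integer values $\ell_i := L_i(x_{2i-1}, x_{2i})$, which satisfy $|\ell_i| \le CB$. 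On each line $L_i = \ell_i$ in $[-B, B]^2$, the $O(B)$ integer points are parametrized by a single integer $t_i$, and the restriction of $H_i$ to that line is a polynomial $p_{i,\ell_i}(t_i) \in \Z[t_i]$ of degree $d-1$; the equation $F_1(x_1,x_2) = F_2(x_3,x_4)$ then becomes the Diophantine relation $\ell_1 p_{1,\ell_1}(t_1) = \ell_2 p_{2,\ell_2}(t_2)$. Combining appropriate fibrewise Thue-type bounds with a careful summation over $(\ell_1, \ell_2)$ should yield the target exponent $d\eta_d + \varepsilon$.

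The principal obstacle lies in the summation over $(\ell_1, \ell_2)$: a naive Bombieri--Pila-type estimate on each fiber would give $O(B^{2/(d-1) + \varepsilon})$ per pair, summing to $O(B^{2 + 2/(d-1) + \varepsilon})$, which is far too weak. To recover the exponent $d\eta_d + \varepsilon$ one must exploit the algebraic structure of the reduced equation---either by reprising the argument of \cite{FW1} at the level of the restricted univariate equations, or by directly exhibiting a pair of non-isomorphic binary forms of degree $d-1$, obtained by suitable homogenization of $p_{1,\ell_1}$ and $p_{2,\ell_2}$, to which Proposition \ref{FouWal} itself can be applied. A secondary technical point is verifying that the non-isomorphy of $F_1, F_2$ descends to the reduced forms, which may require a careful choice of $L_1, L_2$ among the possibly several $\Q$-linear factors of $F_1, F_2$, together with an iterative argument in the case where both $H_1$ and $H_2$ still admit linear factors.
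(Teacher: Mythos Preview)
Your first case (at least one $F_i$ without $\Q$-linear factor) is fine: Proposition~\ref{FouWal} plus $\calM^*\le\calM$ does the job.

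Your second case, however, is not a proof but a plan with unresolved obstacles that you yourself flag. The fibration $F_i=L_iH_i$, parametrization by $\ell_i=L_i(x_{2i-1},x_{2i})$, and the ensuing summation over $(\ell_1,\ell_2)$ is a substantial detour: as you note, the naive fiberwise bound loses by a factor $B^2$, and the suggested fixes (reprising the argument of \cite{FW1} fiberwise, or descending to a pair of non-isomorphic forms of degree $d-1$) each raise further issues. In particular, the ``non-isomorphy descends'' step is not automatic, and iterating when $H_1,H_2$ still have linear factors will in general drive the degree below $3$, where the whole machinery breaks down.

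The paper's argument avoids all of this by \emph{not} treating Proposition~\ref{FouWal} as a black box. Instead it revisits the proof of \cite[Th\'eor\`eme~3.1]{FW1} and observes that the hypothesis ``at least one $F_i$ has no $\Q$-linear factor'' is invoked at exactly one place in that proof (equation~(22) there), and that place is precisely the estimate for the contribution of tuples with $F_1(x_1,x_2)=F_2(x_3,x_4)=0$. Since $\calM^*$ excludes those tuples by definition, that step is simply no longer needed, and the rest of the proof in \cite{FW1} goes through verbatim for $\calM^*$ without any hypothesis on linear factors. So the correct move is a one-line inspection of the cited proof, not a new reduction.
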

  
 \begin{proof} We refer  to the original proof of  \cite[Th\' eor\`eme 3.1]{FW1}. The hypothesis that at least one of the
  two forms  $F_1$ and $F_2$ has no $\Q$--linear factor is only used  in  
  \cite[eq. (22)]{FW1} (which is equation (3.8) in the  Arxiv version). This case has no longer to be considered  when one studies ${\calM}^*$ instead of ${\calM}$.   \end{proof}
  \subsection{Lemmas in diophantine approximation }\label{3lemmes}
  Firstly we prove the following
  
\begin{lemma}\label{poly}
  Let  $f \in \Z [t]$  be  a polynomial  of degree $d \geq 1$ and with discriminant different from  zero. Let  $\xi_1, \dots, \xi_d$ be the complex roots of    $f$. 
  Then there are real constants    $c_1>0$ and $c_2$ such that 
   \begin{enumerate}[label=\upshape(\roman*), leftmargin=*, widest=iii]
  \item  
  For every  $t \in \C$, one has the inequality  $ \min_{1\leq j\leq d} \vert t -\xi_j \vert \leq c_2\vert f (t) \vert,$
  \item 
  For  every $t \in \R$, the condition  $\vert f (t) \vert < c_1$ implies  
  the existence of a real root  $\xi_i$  such that   $\vert t -\xi_i\vert \leq c_2 \vert f (t) \vert$.
  \end{enumerate}
  \end{lemma}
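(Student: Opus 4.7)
The plan is to exploit the factorization $f(t)=a_d\prod_{j=1}^{d}(t-\xi_j)$ where $a_d\in\Z$ is the leading coefficient. Since the discriminant is nonzero the roots $\xi_1,\dots,\xi_d$ are pairwise distinct, so I set
\[
\delta:=\tfrac12\min_{i\ne j}|\xi_i-\xi_j|>0,\qquad R:=\max_j|\xi_j|.
\]
For every $t\in\C$ let $\xi_{j_0}$ denote a root minimising $|t-\xi_j|$. The entire proof of (i) reduces to showing that $|a_d|\prod_{k\ne j_0}|t-\xi_k|$ is bounded below by a positive constant independent of $t$; then
$|t-\xi_{j_0}|\le c_2|f(t)|$ with $c_2$ the reciprocal of that constant.

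To establish this lower bound I would split $\C$ into three regimes.
\emph{Regime A, near a root:} if $|t-\xi_{j_0}|\le\delta$, the reverse triangle inequality gives $|t-\xi_k|\ge|\xi_{j_0}-\xi_k|-|t-\xi_{j_0}|\ge\delta$ for every $k\ne j_0$, so $\prod_{k\ne j_0}|t-\xi_k|\ge\delta^{d-1}$.
\emph{Regime B, bounded but far from the roots:} if $|t-\xi_{j_0}|>\delta$ and $|t|\le 2R$, then $|t-\xi_k|>\delta$ for every $k$ (by choice of $j_0$), so $|f(t)|\ge|a_d|\delta^d$, while $|t-\xi_{j_0}|\le 3R$, yielding a uniform ratio bound.
\emph{Regime C, large $|t|$:} if $|t|>2R$, then $|t-\xi_k|\ge|t|/2$ for every $k$, hence $|f(t)|\ge|a_d|(|t|/2)^d$ and $|t-\xi_{j_0}|\le 3|t|/2$, so the ratio is $\ll|t|^{-(d-1)}$, which is bounded. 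Taking $c_2$ to be the largest of the three constants produced settles (i).

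For (ii) the strategy is to shrink $c_1$ enough that the hypothesis $|f(t)|<c_1$ forces the closest root to a real $t$ to itself be real, and then invoke (i). If $f$ has no real root, $|f|$ is continuous, positive and tends to $+\infty$ at $\pm\infty$, so it attains a minimum $m>0$ on $\R$; any $c_1<m$ makes (ii) vacuous. Otherwise let $\mu:=\min\{|\Im\xi_i|:\xi_i\notin\R\}$ (with the convention $\mu=+\infty$ if all roots are real). For real $t$ and any non-real root $\xi_i$ one has $|t-\xi_i|\ge|\Im\xi_i|\ge\mu$. Let $\xi_{i_0}$ be the real root nearest to $t$. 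If $|t-\xi_{i_0}|\ge\mu/2$, then every root lies at distance $\ge\mu/2$ from $t$ and so $|f(t)|\ge|a_d|(\mu/2)^d$; choosing $c_1:=|a_d|(\mu/2)^d$ rules this case out under the hypothesis $|f(t)|<c_1$. In the remaining case $|t-\xi_{i_0}|<\mu/2$, the root $\xi_{i_0}$ is actually the overall closest root (real or not), so (i) applied to the real number $t$ furnishes the asserted bound with the real root $\xi_i=\xi_{i_0}$.

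The main obstacle is in (i): ensuring $c_2$ is genuinely uniform in $t\in\C$, which is why the split at $|t|=2R$ in Regime C is essential---without it one could fear that $|t-\xi_{j_0}|$ grows unchecked. The discriminant hypothesis enters exactly through the strict positivity of $\delta$, and the parameter $\mu$ governing (ii) is forced by the purely topological fact that a real $t$ cannot approach a non-real root.
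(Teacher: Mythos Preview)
Your proof is correct, but for part (i) the paper achieves the same conclusion with a single observation that avoids your three-regime split entirely. Once $\xi_{j_0}$ is chosen to minimise $|t-\xi_j|$, then for every $k\ne j_0$ one has
\[
|t-\xi_k|\ge\frac{|t-\xi_k|+|t-\xi_{j_0}|}{2}\ge\frac{|\xi_k-\xi_{j_0}|}{2}\ge\frac{\delta}{2}
\]
(here $\delta$ denotes the paper's $\min_{i\ne j}|\xi_i-\xi_j|$, twice your $\delta$). This holds for \emph{all} $t\in\C$ uniformly, so $|f(t)|\ge |t-\xi_{j_0}|(\delta/2)^{d-1}$ and $c_2=(2/\delta)^{d-1}$ with no case analysis. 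Your Regimes B and C are sound but superfluous: the key point is that the nearest-root condition alone already forces every other factor to be at least $\delta/2$.

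For part (ii) the two arguments differ in ordering but are equivalent in spirit. The paper applies (i) first and then observes that if $c_1=\min\{|\Im\xi_i|:\xi_i\notin\R\}/c_2$, the closest root produced by (i) is necessarily real; you instead locate the nearest \emph{real} root first, use a crude lower bound on $|f|$ to dispose of the case where even that root is far, and then check that the nearest real root is the overall nearest. Both are fine; the paper's version is slightly shorter because it lets (i) do the work of selecting the root. One small point: your convention $\mu=+\infty$ for the all-real case leaves $c_1=|a_d|(\mu/2)^d$ undefined; in that case just take $c_1=1$, since (i) already delivers a real root.
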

  
 \begin{proof}  This statement is trivial when $d=1$.  We now suppose $d \geq 2$.
  
  We suppose that $a_0$ (the leading coefficient  of $f$) is $\geq 1 $ and we factor  $f$ into
  $$
  f(t) = a_0 \prod_{j=1}^d (t -\xi_j).
  $$
  Let  $\delta := \min_{1\leq i < j \leq d} \vert \xi_i- \xi_j\vert$.  Since the discriminant of $f$ is different from zero, we have  $\delta >0$.    
  Let $i$
  be an index such that 
  $\vert t-\xi_i\vert =\min_{1\leq j \leq d} \vert t -\xi_j\vert$.   The triangular inequality  gives, for $j \ne i$,  the lower bound 
  $$
  \vert t -\xi_j \vert \geq \frac{\vert t-\xi_j\vert + \vert t -\xi_i\vert}2 \geq \frac 12\, \vert \xi_j - \xi_i \vert \geq \frac \delta 2\cdotp
  $$
  We write the sequence of inequalities
  $$
  \vert f (t) \vert \geq \prod_{1\leq j \leq d} \vert t -\xi_j\vert \geq \vert t -\xi_i\vert \left( \frac \delta 2 \right)^{d-1},
  $$
 which leads to the point {\rm (i)} with $c_2 = (2/\delta)^{d -1}. $
   
   For the item {\rm (ii)}, we now suppose that  $t$ is real.  We decompose the proof into three cases.  
   
   \vskip .3cm
   \noindent $\bullet$  If all the  $\xi_j$ are real, there is nothing to prove  as a consequence of  {\rm (i)}. We choose $c_1=1$ for instance.
   \vskip .3cm
   \noindent $\bullet$ If no  $\xi_j$ is real, we set
   $$
   c_1:= \inf_{x \in \R} \vert f (x) \vert
   $$
   which is $>0$.
   \vskip .3cm
   \noindent $\bullet$ If  $f$  has at least one real root and at least one non real root, we put 
   $$
   c_1 = \frac 1 {c_2} \min\bigl\{ \,\vert {\Im} (\xi_i) \vert : 1\leq i \leq d, \, \xi_i \not\in \R\, \bigr\}.
   $$
 Applying the item  {\rm (i)} of Lemma \ref{poly},  we notice that for $t\in\R$ the inequality   $\vert f(t) \vert < c_1$ implies the existence of  a root  $\xi_j$
   such that
   $$
   \vert t- \xi_j \vert <  c_1 c_2 = \min\bigl\{ \,\vert {\Im} (\xi_i-t) \vert : 1\leq i \leq d, \, \xi_i \not\in \R\, \bigr\}.
   $$
   If  $\xi_j$ were not real, we would deduce the inequality $\vert t -\xi_j \vert < \vert {\Im}(t-\xi_j )\vert$, which is impossible. 
   Hence  $\xi_j$ is real.
  \end{proof} 
 The following lemma provides an upper bound for the tail of the series defining the Riemann $\zeta$--function. 
   
\begin{lemma}\label{zeta} For all  real $\delta >1$ and all positive integer  $B$, one has the inequality
  $$
  \sum_{n\geq B} \frac 1{n^\delta} \leq \zeta (\delta) B^{1-\delta}.
  $$
  \end{lemma}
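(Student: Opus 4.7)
The plan is to prove the inequality by a block-partition of the tail. Specifically, for each integer $m\ge 1$, I group the integers $n$ with $mB\le n <(m+1)B$, which form a block of exactly $B$ consecutive terms.

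On such a block, since every $n$ satisfies $n\ge mB$, the monotonicity of $t\mapsto t^{-\delta}$ gives $n^{-\delta}\le (mB)^{-\delta}$. Summing the $B$ terms of the block yields the bound
$$
\sum_{n=mB}^{(m+1)B-1}\frac 1{n^\delta}\le \frac{B}{(mB)^\delta}=\frac 1{m^\delta\,B^{\delta-1}}.
$$

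Summing over $m\ge 1$ then recovers the full tail from $n\ge B$, and the factor $B^{1-\delta}$ comes out of the sum while the remaining series is exactly $\zeta(\delta)$:
$$
\sum_{n\ge B}\frac 1{n^\delta}=\sum_{m\ge 1}\sum_{n=mB}^{(m+1)B-1}\frac 1{n^\delta}\le B^{1-\delta}\sum_{m\ge 1}\frac 1{m^\delta}=\zeta(\delta)\,B^{1-\delta}.
$$

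There is no real obstacle here; the only point to check carefully is that the blocks $[mB,(m+1)B)$ for $m\ge 1$ partition $\{n\in\Z:n\ge B\}$ into disjoint intervals each of length exactly $B$, which is immediate because $B$ is a positive integer. The argument is a discrete analogue of the usual integral comparison $\int_B^\infty t^{-\delta}\,\mathrm{d}t = B^{1-\delta}/(\delta-1)$, but with the sharper constant $\zeta(\delta)$ that the lemma requires.
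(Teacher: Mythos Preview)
Your proof is correct and essentially identical to the paper's own argument: the paper also partitions the tail into blocks of length $B$, writing $n=Bq+r$ with $q\ge 1$ and $0\le r\le B-1$, bounds each term by $(Bq)^{-\delta}$, and sums to obtain $\zeta(\delta)B^{1-\delta}$.
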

 
 \begin{proof}  By dividing the interval of summation in intervals with length $B$ and by using the inequality 
 $Bq+r\geq Bq$, we write 
 $$
 \sum_{n\geq B} \frac 1 {n^\delta} = \sum_{q\geq 1} \sum_{r=0}^{B-1} \frac 1 {(Bq+r)^\delta} \leq B^{1-\delta} \sum_{q\geq 1} \frac 1{q^\delta} =\zeta (\delta)\, B^{1-\delta}.
 $$
 \end{proof} 
The next lemma was inspired by   \cite[p. 34--36]{Ho1}.  
  
\begin{lemma}\label{InspirHooley} Let  $\xi$, $\kappa$, $s$, $Q_1$ and  $Q_2$ be real numbers such that  $s>2$, 
  $\kappa >0$, $Q_2 >Q_1 \geq 1$. Then the number of rational numbers  $\frac pq$ such that
  $$
  \Bigl\vert \xi -\frac pq\Bigr\vert \leq \frac \kappa{q^s} \text{  and  } Q_1\leq q \leq Q_2,
  $$
  is bounded   by  
 $$ 
 \frac{2^{s+1}\kappa }{(2^{s-2}-1) Q_1^{s-2}} +\left\lceil{ \frac{\log  \frac{Q_2}{Q_1}}{\log 2}}\right\rceil.
  $$
  \end{lemma}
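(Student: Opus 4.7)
The plan is to apply a dyadic decomposition of the range of denominators $[Q_1,Q_2]$, combined with the trivial lower bound on the separation between distinct rationals. Concretely, I would partition $[Q_1,Q_2]$ into dyadic intervals $[2^k Q_1,\,2^{k+1}Q_1]$ for $k=0,1,\dots,K$, with $K$ chosen minimal so that $2^{K+1}Q_1\geq Q_2$; this gives $K+1=\lceil \log(Q_2/Q_1)/\log 2\rceil$ such intervals.

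Inside a single dyadic block, say $q\in[Q,2Q]$ with $Q=2^kQ_1$, I would count the admissible rationals $p/q$ by a ``balls in an interval'' argument. On the one hand, every such $p/q$ lies in the interval $[\xi-\kappa Q^{-s},\,\xi+\kappa Q^{-s}]$, of length $2\kappa/Q^s$. On the other hand, if $p/q\ne p'/q'$ are two distinct admissible fractions in this block, then
$$
\left|\frac pq-\frac{p'}{q'}\right|=\frac{|pq'-p'q|}{qq'}\geq\frac{1}{qq'}\geq\frac{1}{4Q^2}\cdotp
$$
Packing points with mutual separation at least $1/(4Q^2)$ inside an interval of length $2\kappa/Q^s$ gives at most $8\kappa/Q^{s-2}+1$ admissible fractions in this block.

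It remains to sum over $k=0,1,\dots,K$. The ``$+1$'' contributions add up to the second term $\lceil \log(Q_2/Q_1)/\log 2\rceil$ of the claimed bound, while the main terms form a geometric series
$$
\sum_{k=0}^{K}\frac{8\kappa}{(2^kQ_1)^{s-2}}\leq\frac{8\kappa}{Q_1^{s-2}}\cdot\frac{1}{1-2^{-(s-2)}}=\frac{8\kappa}{Q_1^{s-2}}\cdot\frac{2^{s-2}}{2^{s-2}-1}=\frac{2^{s+1}\kappa}{(2^{s-2}-1)Q_1^{s-2}}\cdotp
$$
The convergence of this geometric series is precisely what forces the hypothesis $s>2$, which is used nowhere else in the proof. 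Adding the two contributions produces exactly the bound announced in the lemma.

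I do not anticipate any serious obstacle: both ingredients (the pairwise separation $1/(qq')$ between distinct reduced fractions and the dyadic packing) are entirely classical, and the scheme follows the strategy of Hooley \cite[p.~34--36]{Ho1} mentioned in the statement. The only minor point to check is that the dyadic intervals $[2^k Q_1,\,2^{k+1}Q_1]$ cover $[Q_1,Q_2]$, which is automatic from the definition of $K$.
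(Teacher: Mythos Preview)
Your proposal is correct and follows essentially the same argument as the paper's proof: a dyadic decomposition of $[Q_1,Q_2]$ into intervals $[2^kQ_1,2^{k+1}Q_1]$, the separation bound $|p/q-p'/q'|\ge 1/(4Q^2)$ within each block, the packing estimate $8\kappa/Q^{s-2}+1$, and the geometric summation yielding the constant $2^{s+1}/(2^{s-2}-1)$. The only cosmetic difference is that the paper first isolates the base case $Q_2\le 2Q_1$ before doing the dyadic sum, whereas you handle all cases uniformly; this changes nothing of substance.
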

 
 \begin{proof}  Firstly we consider the case when $Q_2 \leq 2 Q_1$  and we prove the result with the coefficient  $\frac {2^{s+1}}{2^{s-2} -1}$ replaced by $8$. 
 Two distinct  rational numbers   $\frac pq$, $\frac{p'}{q'}$ such that $Q_1\leq q, q' \leq Q_2$ satisfy the inequalities
 $$
 \left\vert \frac pq - \frac {p'}{q'}\right\vert \geq \frac 1{qq'} \geq \frac 1 {Q_2^2} \geq \frac 1{4 Q_1^2}\cdotp
 $$
 If  they also satisfy
 $$
   \Bigl\vert \xi -\frac pq\Bigr\vert \leq \frac \kappa{q^s}  \text{ and }   \Bigl\vert \xi -\frac {p'}{q'}\Bigr\vert \leq \frac \kappa{{q'}^s},
   $$
then they belong to the interval
$$
\left[ \xi -\frac{\kappa}{Q_1^s}, \xi +\frac \kappa{Q_1^s}\right],
$$ 
 the length of which is  $2\kappa/Q_1^s$. So the number of  such $\frac pq$ is less than  
$$
4Q_1^2 \frac {2\kappa}{Q_1^s} +1 =\frac {8\kappa}{Q_1^{s-2}} +1.
$$
In the case where  $Q_2 >2Q_1$, we cover the interval  $[Q_1,Q_2]$ by $\ell$ intervals $[2^h Q_1, 2^{h+1} Q_1]$, $0\leq h\leq \ell -1$ with 
$2^{\ell -1} Q_1 < Q_2 \leq 2^\ell Q_1$;  thus  $\ell$ satisfies the inequalities 
$$
 \frac {\log \frac{Q_2}{Q_1}}{\log 2}\leq \ell  < 1 + \frac {\log \frac{Q_2}{Q_1}}{\log 2}\cdotp
$$
As we have seen, in the interval   $[2^h Q_1, 2^{h+1} Q_1]$, the number of rational numbers $\frac pq$ satisfying our assumption is bounded by
$$
\frac {8\kappa} {2^{h(s-2)} Q_1 ^{s-2}} +1.
$$
The total number of fractions $\frac pq$ satisfying our assumption  is less than 
$$
\sum_{h =0}^{\ell -1}\left(  \frac {8\kappa} {2^{h(s-2)} Q_1 ^{s-2}} +1\right) = \frac {8\kappa}{ Q_1^{s-2}}\sum_{h=0}^{\ell -1} \frac 1{2^{h (s-2)}}  +\ell  < \frac {8\kappa}{Q_1^{s-2}} \cdot \frac 
{2^{s-2}}{2^{s-2}-1} + \left\lceil{  \frac{ \log \frac{Q_2}{Q_1}}{\log 2} }\right\rceil.
$$
 \end{proof} 
\subsection{On the set of the values taken by a binary form when one of the variables is large } \label{2.3} As a consequence of the three lemmas proved in   \S \ref{3lemmes}  
we will deduce
\begin{proposition} \label{Prop2.3} Let $d \geq 3$ and let  $F \in {\Bin} (d, \Z)$. Then there are two constants  $c_3$ and  $c_4$,  effectively
computable  and depending on  $F$ only, such that, for all $\Delta >c_3$ and all $A >0$ one has the following inequality
$$
\sharp \left\{ (x,y)\in \Z^2 :
0 < \vert F (x,y)\vert \leq A, \vert y \vert \geq A^{1/d} \Delta
\right\}
\leq c_4 \left(
A^{2/d} \Delta^{2-d} + A^{1/(d-1)}
\right).
$$
\end{proposition}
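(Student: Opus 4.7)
I would combine the three preparatory lemmas of this subsection. Set $f(t):=F(t,1)$; by hypothesis $f$ has degree $d$ and nonzero discriminant. Let $c_1,c_2$ be the constants produced by Lemma \ref{poly} applied to $f$, and set $c_3:=c_1^{-1/d}$. For any pair $(x,y)$ counted in the proposition, the hypothesis $|y|\geq A^{1/d}\Delta$ with $\Delta>c_3$ gives $|f(x/y)|=|F(x,y)|/|y|^d\leq \Delta^{-d}<c_1$, so Lemma \ref{poly}(ii) produces a real root $\xi$ of $f$ such that $|x-\xi y|\leq c_2 A/|y|^{d-1}$. Multiplying the final bound by the number of real roots of $f$ (at most $d$), it suffices to estimate, for a fixed real root $\xi$, the cardinality of
\[
S_\xi := \bigl\{(x,y)\in \Z^2 : |y|\geq A^{1/d}\Delta,\; F(x,y)\neq 0,\; |x-\xi y|\leq c_2 A/|y|^{d-1}\bigr\}.
\]

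I would partition $S_\xi$ at the threshold $Y^* := (2c_2 A)^{1/(d-1)}$, where the slab width $c_2 A/|y|^{d-1}$ equals $1/2$. For $A^{1/d}\Delta\leq |y|\leq Y^*$, each horizontal line contains at most $2c_2 A/|y|^{d-1}+1$ admissible integers $x$; summing and applying Lemma \ref{zeta} with $\delta=d-1\geq 2$ yields
\[
\sum_{|y|\geq A^{1/d}\Delta}\frac{2 c_2 A}{|y|^{d-1}} \;+\; \sum_{A^{1/d}\Delta\leq |y|\leq Y^*} 1 \;\ll_F\; A^{2/d}\Delta^{2-d} + A^{1/(d-1)}.
\]
For $|y|>Y^*$ the slab has width $<1/2$, so each line contains at most one integer $x$ (the integer nearest to $\xi y$). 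Writing $x/y=p'/q'$ in lowest terms and $y=kq'$, the slab condition becomes $|p'/q'-\xi|\leq c_2 A/(kq')^d\leq c_2 A/(q')^d$, so Lemma \ref{InspirHooley} is applicable with $s=d$ and $\kappa=c_2 A$. A dyadic decomposition of $q'$ into windows $[2^j Y^*, 2^{j+1}Y^*]$, $j\geq 0$ (with a separate treatment of $q'<Y^*$, where the required multiplier $k>Y^*/q'$ tightens the approximation condition), bounds the contribution of window $j$ by $\ll_F A^{1/(d-1)}\cdot 2^{-j(d-2)} + O_F(1)$; since $d\geq 3$, the geometric series sums to $\ll_F A^{1/(d-1)}$.

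The delicate point will be preventing the constant-order error terms from the dyadic applications of Lemma \ref{InspirHooley} from aggregating into a logarithmic factor in $A$. I would handle this by combining the integrality $|F(x,y)|\geq 1$ with the factorization $F(x,y)=a_0\prod_{j=1}^{d}(x-\xi_j y)$ (with $a_0$ the leading coefficient of $f$): for $(x,y)\in S_\xi$ the factors indexed by $\xi_j\neq\xi$ are comparable to $|y|\,|\xi_j-\xi|$, which together with integrality produces an $F$-dependent lower bound on $|x-\xi y|$ and hence caps the number of nontrivially contributing dyadic windows by a constant depending only on $F$, yielding constants $c_3$ and $c_4$ of the required form.
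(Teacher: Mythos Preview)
Your overall architecture matches the paper's: reduce via Lemma~\ref{poly}(ii) to approximation of a real root $\xi$, split at a threshold $\asymp A^{1/(d-1)}$, handle the small-$|y|$ range by line counting and Lemma~\ref{zeta}, and attack the tail with Lemma~\ref{InspirHooley}. (Minor slip: $f(t)=F(t,1)$ may have degree $d-1$ rather than $d$; this is harmless but you should not assert $\deg f=d$.)

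The genuine gap is your last paragraph. The integrality $|F(x,y)|\ge 1$ together with the factorization gives, for $(x,y)\in S_\xi$, only $|x-\xi y|\gg_F |y|^{-(d-1)}$, i.e.\ the Liouville lower bound $|\xi-x/y|\gg_F |y|^{-d}$. Combined with the upper bound $|\xi-x/y|\le c_2 A\,|y|^{-d}$ this yields no constraint on $|y|$ whatsoever (only $A\gg_F 1$), so it cannot ``cap the number of nontrivially contributing dyadic windows by a constant depending only on $F$''. As $A\to\infty$ the number of nonempty windows genuinely grows (think of the convergents $p_n/q_n$ of $\xi$, which satisfy the inequality whenever $q_n^{d-2}\ll A$); with your argument the $+1$ terms from Lemma~\ref{InspirHooley} accumulate to an unbounded quantity in $A$, and the statement asks for effective constants, so one cannot fall back on Roth's theorem either.

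The paper closes this gap with Fel'dman's effective improvement of Liouville (Lemma~\ref{Feldmangeneralise}): one has $|\xi-p/q|\ge c_5'\,q^{-(d-c_6')}$ with $c_6'>0$, which combined with $|\xi-x/y|\le c_2 A\,|y|^{-d}$ forces $|y|\le Y_1:=(c_2 A/c_5')^{1/c_6'}$. A single application of Lemma~\ref{InspirHooley} on the whole range $[Y_0,Y_1]$ with $Y_0=A^{1/(d-1)}$ then gives the bound $O(A/Y_0^{d-2})+O(\log Y_1)=O(A^{1/(d-1)})$, since $\log Y_1=O_F(\log A)$. Without an input strictly stronger than Liouville you cannot make the tail estimate effective.
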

The proof of this proposition will use the following  effective refinement of Liouville's inequality, due to   N. I. Fel'dman~\cite{F}:

\begin{lemma}\label{Feldman} Let  $\xi$ be an algebraic number of degree  $d\geq 3$.  There are two effectively computable positive constants
 $c_5= c_5 (\xi)$ and  $c_6 = c_6 (\xi)$   such that, for every fraction  $p/q\in \Q$  with $q \geq 1 $, one has the inequality 
$$
\left\vert \xi -\frac pq \right\vert \geq \frac{c_5}{q^{d-c_6}}\cdotp
$$
\end{lemma}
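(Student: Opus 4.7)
The plan is to establish this effective strengthening of Liouville's inequality via Baker's theory of linear forms in logarithms of algebraic numbers, which is the route originally followed by Feldman. The starting point would be the Liouville identity: writing $f(X)=a_0\prod_{i=1}^d(X-\xi_i)\in\Z[X]$ for the minimal polynomial of $\xi=\xi_1$, the fact that $q^d f(p/q)$ is a nonzero integer whenever $p/q\ne \xi_i$ yields
$$|\xi-p/q|\cdot\prod_{i\geq 2}|\xi_i-p/q|\geq |a_0|^{-1}q^{-d},$$
which gives only the trivial Liouville exponent $d$. The task is to save a fixed effective positive amount $c_6$ in this exponent while keeping $c_5$ effectively computable from $\xi$.

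The mechanism, due to Feldman, is to exploit the unit group of the ring of integers $\mathcal{O}_K$ of $K=\Q(\xi)$. For a very good rational approximation $p/q$, the algebraic integer $\beta:=p-q\xi\in\mathcal{O}_K$ is tiny in the embedding $\xi\mapsto\xi_1$ but of size $\asymp q$ in the other embeddings; multiplying $\beta$ by a suitable monomial $\varepsilon_1^{n_1}\cdots\varepsilon_r^{n_r}$ in fundamental units would equalize the archimedean absolute values of the conjugates of the new algebraic integer. Comparing the result with a nearby rational integer produces a quantity of the shape $\zeta\,\varepsilon_1^{n_1}\cdots\varepsilon_r^{n_r}-1$ (for some root of unity $\zeta$) which must be small in some archimedean valuation, and taking logarithms converts it into a nonzero linear form $b_0\log\zeta+\sum_i b_i\log\varepsilon_i$ whose integer coefficients satisfy $\max_i|b_i|=O(\log q)$.

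The core step is then to invoke Baker's effective lower bound on nonzero linear forms in logarithms of algebraic numbers, giving $\bigl|b_0\log\zeta+\sum_i b_i\log\varepsilon_i\bigr|\geq C(\xi)\cdot H^{-\kappa}$ with $H=\max_i|b_i|$ and both $C(\xi)$ and $\kappa$ effectively computable from $\xi$. Combining this lower bound with the upper bound extracted from the presumed smallness of $|\xi-p/q|$ forces an inequality of the form $|\xi-p/q|\geq c_5/q^{d-c_6}$ with explicit $c_5$, $c_6$. The main obstacle is precisely this appeal to Baker's theorem: it is the deep ingredient that makes the improvement over Liouville \emph{effective}, whereas without it one would recover only the ineffective refinements of Thue--Siegel--Roth type. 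The remainder of the argument is algebraic-number-theoretic bookkeeping, controlling the regulator, the choice of a fundamental system of units, the growth of the exponents $n_i$, and the archimedean absolute values of the conjugates of $\beta$.
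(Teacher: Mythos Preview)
The paper does not actually prove this lemma: it quotes it as a known result due to Fel'dman \cite{F}, with a pointer to the explicit version in \cite{GP}, and then uses it as a black box. Your sketch, by contrast, outlines the substance of Fel'dman's original argument via Baker's theory of linear forms in logarithms, and the outline is accurate in its broad strokes: the passage from a putative good approximation $p/q$ to a small nonzero linear form in logarithms of units of $\Q(\xi)$, and the invocation of Baker's effective lower bound to contradict excessive smallness. So you are supplying more than the paper does; the paper's ``proof'' is simply a citation.

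One small caveat on your write-up: the step where you multiply $\beta=p-q\xi$ by a unit to balance the archimedean absolute values, and then ``compare the result with a nearby rational integer'' to produce a quantity $\zeta\varepsilon_1^{n_1}\cdots\varepsilon_r^{n_r}-1$, is a bit too telegraphic to stand as a proof. In Fel'dman's actual argument one works with the $p$-adic or archimedean logarithms of the conjugates of $\beta$ directly, expressing $\log|\beta^{(i)}|$ in terms of the unit logarithms and a bounded remainder; the linear form in logarithms arises from this expression rather than from an ad hoc comparison with an integer. If you intend this as more than a reference sketch, that step would need to be fleshed out.
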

A completely explicit version of this inequality  can be found in  \cite[(13) p. 248]{GP}.  

We deduce from this lemma the following one.

\begin{lemma}\label{Feldmangeneralise}  Let  $P(X)\in \Z [X]$ be a  polynomial,  of degree $d\geq 3$.  There are two effectively computable positive constants  
$c'_5 = c'_5 (P)$ and  $c'_6=c'_6 (P)$ such that,
for every root $\xi$ of  $P$,  for every rational number  $p/q$ such that  $q\geq  1$ and   $p/q\ne \xi$, the following inequality holds 
\begin{equation}\label{412}
\left\vert \xi -\frac pq
\right\vert \geq \frac {c'_5}{q^{d-c'_6}}\cdotp
\end{equation}
\end{lemma}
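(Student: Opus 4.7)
The plan is to reduce the problem to Lemma \ref{Feldman} by factoring $P$ over $\Q$ and then dispatching the remaining low-degree cases by classical Liouville-type estimates. First I would write
$$
P(X)=a_0\prod_{i=1}^{k} P_i(X)^{e_i},
$$
where each $P_i\in\Z[X]$ is primitive and irreducible over $\Q$ of some degree $d_i\ge 1$, with $\sum_i e_i d_i=d$. Every root $\xi$ of $P$ is then algebraic over $\Q$ of a degree $d_i\le d$, equal to the degree of the unique irreducible factor $P_i$ annihilating it.

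I would then split into three cases according to $d_i$. If $d_i\ge 3$, Lemma~\ref{Feldman} applies directly and produces effectively computable constants $c_5(\xi),c_6(\xi)>0$ such that $|\xi-p/q|\ge c_5(\xi)/q^{d_i-c_6(\xi)}$ for every $p/q\in\Q$ with $q\ge 1$; since $d_i\le d$ and $q\ge 1$, this is at least $c_5(\xi)/q^{d-c_6(\xi)}$, which is already of the required shape. If $d_i=2$, let $Q(X)=aX^2+bX+c\in\Z[X]$ be the minimal polynomial of $\xi$ (with $a>0$). For $p/q\ne\xi$ the integer $q^2 Q(p/q)=ap^2+bpq+cq^2$ is nonzero, whence $|Q(p/q)|\ge 1/q^2$; combined with $Q(p/q)=a(p/q-\xi)(p/q-\overline{\xi})$ and the trivial upper bound on $|p/q-\overline{\xi}|$ (valid as soon as $|p/q-\xi|\le 1$, and readily extended to all $p/q$ by enlarging the constant), one gets the classical Liouville bound $|\xi-p/q|\ge c(\xi)/q^2$. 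If $d_i=1$, then $\xi=u/v$ in lowest terms with $v\ge 1$, and $|\xi-p/q|=|uq-vp|/(vq)\ge 1/(vq)$ for $p/q\ne\xi$.

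Finally I would take $c'_5(P)$ to be the minimum of the finitely many constants produced above as $\xi$ ranges over the roots of $P$, and $c'_6(P)$ to be a positive real number small enough that $d-c'_6\ge d_i-c_6(\xi)$ in the first case, $d-c'_6\ge 2$ in the second case, and $d-c'_6\ge 1$ in the third case (all achievable since $d\ge 3\ge d_i$ whenever $d_i\in\{1,2\}$, and $d\ge d_i$ otherwise); this yields \eqref{412} uniformly in $\xi$ and $p/q$. There is no real obstacle here: the only delicate point is the bookkeeping that ensures a single exponent $d-c'_6$ works simultaneously for all roots, and the observation that Fel'dman's theorem is genuinely needed only for the roots of degree $\ge 3$, the remaining roots being handled by elementary arguments.
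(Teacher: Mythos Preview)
Your proof is correct and follows essentially the same approach as the paper: split according to the degree of the root $\xi$, dispatch the degree~$1$ and degree~$2$ cases by elementary Liouville-type estimates, invoke Fel'dman (Lemma~\ref{Feldman}) for degree~$\ge 3$, and finally take minima over the finitely many roots. The only organizational difference is that the paper first disposes of all non-real roots via the trivial bound $|\xi-p/q|\ge|\Im\xi|$ before treating the real roots by degree, whereas you handle non-real roots implicitly within the degree-based split; both routes arrive at the same conclusion with the same ingredients.
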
 
We stress that there is no assumption on whether the polynomial $P$ is irreducible or not,  nor on whether the root $\xi$ is  real or not.

 \begin{proof} Let $\delta$ be the degree of $\xi$. We split the argument according to the value of $\delta$ and to the nature of $\xi$.
\begin{itemize}
\item If  $\xi$ is not real, the inequality   \eqref{412}  is trivial since we have  $\vert \xi -p/q\vert \geq | {\Im}\, \xi|$, for every rational number  $p/q$.
\vskip .3cm
 We now suppose that  $\xi$ is a real number. 
 \vskip .3cm

\item If $\delta =1$. We put  $\xi =a/b$ with  $a$ and  $b$ integers and  $b\geq 1$. We have  $\vert a/b -p/q\vert = \vert aq-bp \vert /  bq \geq 1/ bq$, since  $\xi$ is different from  $p/q$. We obtain  \eqref{412}, with the choices $c'_5 =1/b$ and $c'_6=1$ since $d \geq 3$.
\item If $ \delta =2$. The real number  $\xi$ is  quadratic. Liouville's inequality for quadratic real numbers is optimal: there exists   $\alpha =\alpha (\xi) >0$ such that
$$
\left\vert
\xi -\frac pq \right\vert \geq \frac {\alpha }{q^2}\cdotp
$$
By the hypothesis $d\geq 3,  $  we deduce  \eqref{412} with the choice $c'_5 =\alpha $ and   $c'_6 =1/2$. 
\item If $\delta \geq 3$. We apply Lemma  \ref{Feldman} in the form 
$$
\left\vert  \xi -\frac pq  \right\vert \geq  \frac{c_5 }{q^{\delta -c_6 }}\cdotp
$$
Since  $\delta \leq d$, we obtain  \eqref{412} with the choice  $c'_5= c_5$ and  $c'_6 =c_6$. 
\end{itemize}
We  choose for  $c'_5= c'_5 (P)$ and for  $c'_6= c'_6 (P)$ the least values $c'_5$ and $c'_6$  corresponding to the various $\xi$  that we met above to complete the proof  of Lemma \ref{Feldmangeneralise}.
\end{proof}
\vskip .3cm
\noindent  {\em Proof of Proposition  \ref{Prop2.3}}. 
  Let  $f(t) =F(t,1)$, thus we have $F(x,y) =y^d f(x/y)$. Let  $d'$ be the degree of $ f$.  Since the discriminant of $F$ is different from zero, we have 
    $$
  d' =d \text{ or } d-1.
  $$ 
    If  $f$ has no real root, then, for sufficiently large  $\Delta$ (more precisely, for $\Delta > \left( \inf_{t\in \R} \vert f(t) \vert \right)^{-1/d}$), the set 
  $$
  \left\{ (x,y)\in \Z^2 :
0 < \vert F (x,y)\vert \leq A, \vert y \vert \geq A^{1/d} \Delta
\right\}
$$ is empty. 

Let  $r\geq 1$  be the number of real roots of $f$, that we denote by $\xi_1$,..., $\xi_r$. By hypothesis these roots are simple.  Let  $(x,y) \in \Z^2$ with $y\ne 0$. The condition $0< \vert F( x,y)\vert \leq A$ implies 
$$
0 < \left\vert f\left( \frac xy\right)\right\vert \leq \frac A{\vert y \vert^d}\cdotp
$$
We suppose  $\vert y \vert \geq A^{1/d} \Delta$ and  $\Delta> c_1^{-1/d}$,  and we apply   Lemma \ref{poly} {\rm (ii)}.
We deduce the existence of some  $i\in \{ 1, \dots, r\}$ such that 
 \begin{equation}\label{(1.1)}
 0< \left\vert \frac xy-\xi_i \right\vert \leq \frac {c_2A}{\vert y \vert^d},
 \end{equation}
 which is equivalent to 
 \begin{equation}\label{(1.2)}
 0< \vert x-y\xi_i\vert \leq \frac{c_2 A}{\vert y \vert^{d-1}}\cdotp
 \end{equation}
When the integer  $y$ is fixed, the number of integers  $x$ satisfying the inequality  \eqref{(1.2)} is equal to 
 $$
 \frac{2c_2A}{\vert y \vert^{d-1}} +O (1).
 $$
We fix  $Y_0=A^{1/(d-1)}$ and we sum over  $i=1, \dots, r$.  We apply   Lemma \ref{zeta} with  $B= A^{1/d} \Delta$ and  $\delta = d-1$,  to deduce that the number of 
   $(x,y)$ with  $0 < \vert F (x,y) \vert \leq A$ and  $A^{1/d} \Delta \leq \vert y \vert \leq Y_0$  is bounded by 
   \begin{equation}\label{(1.3)}
    O\bigl(A^{2/d}\Delta^{2-d} \bigr) +O (Y_0).
 \end{equation}

 To complete the proof, we use      Lemma \ref{Feldmangeneralise} which implies the lower bound  
 \begin{equation}\label{376}
 \left\vert \xi_i-\frac pq\right\vert \geq \frac{c'_5}{q^{d'-c'_6}}\geq \frac{c'_5}{q^{d-c'_6}}\cdotp
\end{equation}
  Combining    \eqref{(1.1)} with \eqref{376}, we deduce the upper bound   $\vert y \vert \leq Y_1$ with $Y_1 =(\frac {c_2}{c'_5} A)^{1/{c'_6}}$.
It remains to compute the number of solutions of \eqref{(1.1)} satisfying  $Y_0< \vert y \vert \leq Y_1$. We use  Lemma  \ref{InspirHooley}, with  $s= d$, $\kappa = c_2 A$, $Q_1= Y_0$, $Q_2=Y_1$: this  number is bounded by  
 $$
 O\left(A/Y_0^{d-2} \right) + O\left( \log Y_1 \right)= O \left( A^{1/(d-1)}\right).
 $$
By adding  \eqref{(1.3)} we obtain the upper bound announced in  Proposition \ref{Prop2.3}.
  \hfill$\square$ 
 
  \subsection{End of the proof of  Theorem  \ref{keyresult}}\label{2.4} We split the end of the proof into two different cases:
  \vskip .3cm
  \noindent $\bullet$ Assume the binary form  $F_1 (X,Y) F_2 (X,Y)$ has no zero in  $\PP^1 (\R)$.
  This hypothesis holds true if and only if   the polynomial  $F_1(t,1)F_1 (1,t)F_2(t,1) F_2(1,t)$  has no real root. By homogeneity, there is a constant  $c_7>0$ such that for all 
  $(x_1, x_2, x_3, x_4)\in \R^4$, one has the inequalities 
  $$
  \vert F_1 (x_1, x_2) \vert \geq c_7 \max \{ \vert x_1\vert^d, \vert x_2 \vert^d \} \text{ and  }   \vert F_2 (x_3, x_4) \vert \geq c_7 \max \{ \vert x_3\vert^d, \vert x_4 \vert^d \}.
  $$
 This leads to  the existence of a constant  $c_8$ such that the inequalities  $$\vert F_1 (x_1,x_2) \vert \leq N\text{ and  } \vert F_2 (x_3,x_4) \vert \leq N$$
  imply
 $\max(
  \vert x_1\vert, \vert x_2 \vert, \vert x_3\vert, \vert x_4 \vert ) \leq B$ with  $B:= (c_8 N)^{1/d}$. 
We apply  Proposition~\ref{FouWalmod} under the form 
$$
{\calN}(F_1\virgule F_2;N)\leq 1 + {\calM}^*(F_1\virgule F_2; B) = O_{F_1,F_2} \left(B^{d\eta_d +\varepsilon}\right) = O_{F_1,F_2} \left( N^{\eta_d +\varepsilon}\right).
$$
By the inequality   \eqref{eta<2/d},  the proof of Theorem \ref{keyresult} is complete in that case, including the refinement quoted in Remark \ref{Remark:NoRealRoot}.
\vskip .3cm
\noindent $\bullet$  Assume the binary form $F_1 (X,Y) F_2 (X,Y)$ has  at least one zero in $\PP^1 (\R)$.  This is equivalent
to the assumption that the polynomial 
$$
F_1(t,1)F_1 (1,t)F_2(t,1) F_2(1,t)
$$ 
has at least one real root.  The constant $\eta'_{d,F_1,F_2}$ is now defined by the second formula of 
  \eqref{eta'=}, that is $\eta'_{d,F_1,F_2}=\vartheta_d$. Let  
$$
\tau := \frac {\frac 2d -\eta_d}{d\eta_d+d-2},
$$
so we have the equalities 
$$
\frac 2d -(d-2)\tau= \eta_d (1+d \tau) = \eta'_{d,F_1,F_2}.
$$
Let  $\Delta := N^\tau$. To bound from above ${\calN} (F_1\virgule F_2; N)$,  which is the number of  $m\in \Z$, $\vert m \vert \leq N,$  such that there is  at least one  $(x_1,x_2,x_3,x_4)\in \Z^4$ satisfying
the equality
\begin{equation}\label{F=F=m}
F_1 (x_1,x_2) = F_2 (x_3, x_4) =m,
\end{equation}
we first consider those $m$ such that at least one of $(x_1,x_2,x_3,x_4)$ associated to $m$ by \eqref{F=F=m}  satisfies  the inequality
$$
\max\{ \vert x_1\vert, \vert x_2\vert, \vert x_3\vert , \vert x_4\vert \} < N^{1/d} \Delta.
$$
Proposition \ref{FouWalmod} with $B= N^{\frac 1d +\tau}$ shows that the number of these $m$  is bounded by
\begin{equation}\label{422}
O_{F_1,F_2,\varepsilon} \left( B^{d\eta_d +\varepsilon} \right) = O_{F_1,F_2,\varepsilon} \left( N^{\eta_d(1+d\tau)+\varepsilon}\right)= 
O_{F_1,F_2,\varepsilon} \left(N^{\eta'_{d,F_1,F_2} + \varepsilon}\right).
\end{equation}
Next, we estimate the number of  those $m$  such that all  the $4$--tuples  $(x_1,x_2,x_3,x_4)$ associated to $m$ by \eqref{F=F=m}  satisfy  the inequality
$$
\max\{ \vert x_1\vert, \vert x_2\vert, \vert x_3\vert , \vert x_4\vert \} \geq  N^{1/d} \Delta.
$$
For simplicity, we study the case where  $\vert x_1\vert \geq N^{1/d} \Delta$, since the other cases are similar. We only consider the values taken by the binary form  $F_1$ and we apply
 Proposition \ref{Prop2.3}. With the choices   $F=F_1$ and  $A=N$, using $\vartheta_d\ge 1/(d-1)$, we  deduce that the number of corresponding  $m$ is bounded by 
$$
O_{F_1,F_2} \left( N^{2/d} \Delta^{2-d} +N^{1/(d-1)}\right)=O_{F_1,F_2}\left( N^{\eta'_{d,F_1,F_2}}\right).
$$
By  \eqref{422}, this completes the proof of  Theorem  \ref{keyresult}.

 \section{Proof of Theorem \ref{Thm-on-R}} 
 By similarity with   \eqref{defRgeqd}, we put 
\begin{multline*}
{\calR}_{=d}( \calF,B,A ):= \sharp \{m: 0\leq \vert m \vert \leq B, \text{ there is  }   F\in \calF_{d}, (x,y)\in \Z^2, \\
\text{ such that  } \max(\vert x \vert, \vert y \vert ) \geq A\text{ and }m =F(x,y)
\}.
\end{multline*}
The lower bound for  ${\calR}_{\geq d}( \calF,B,A )$ is obtained as follows
\begin{align*}
{\calR}_{\geq d}( \calF,B,A ) &\geq  {\calR}_{=d}( \calF,B,A ) \\
& \geq \sum_{F\in \calF_d}  {\calN}( F\virgule F;B)- \underset{\atop{F,\ F'\in \calF_d }{ F \ne F'}} {\sum \ \sum} {\calN}(F\virgule F';B)
- (2A+1)^2 d^{A_1},
\end{align*}
where the counting function  ${\calN}$ is defined by  \eqref{defNF1=F2}. Condition 
(iii) in Definition \ref{agreable} of a  regular family implies $\sharp \calF_d=O_d(1)$;  thanks to condition 
(iv) and  to the inequality $\kappa_d \leq \vartheta_d$ (see \eqref{kappa<eta'}),  Theorems  \ref{keyresult} and  \ref{Stewart} give the inequality 
\begin{equation}\label{m0}
{\calR}_{\geq d}( \calF,B,A )
\geq \left( \sum_{F\in \calF_d} A_{F } W_F\right)\cdot B^{2/d} -O_{\calF,A,\varepsilon} \left( B^{\vartheta_d +\varepsilon} \right).
\end{equation}

For the upper bound, we recall that the parameters $d_0, \, \kappa$ and $A_1$  appear in Definition \ref{agreable}. We start from the inequality
\begin{multline}
{\calR}_{\geq d}( \calF,B,A ) \leq \sum_{F\in \calF_d} {\calN} (F\virgule F;B) 
+ \sum_{n=d^\dag}^{d^\dag +d_0} \sum_{F \in \calF_n} {\calN} (F\virgule F; B) \\ + \sharp\, \left( \underset{n> d^\dag +d_0}{\bigcup}\, \underset{ F\in \calF_n}{\bigcup} \left( F(\calZ_A) \cap [-B,B] \right) \right),\label{m1}
\end{multline}
with $$
\calZ_A = \Z^2\setminus \bigl( [-A, A]\times [-A,A]\bigr).
$$
Applying one more time Theorem  \ref{Stewart},  we have the equality 
\begin{equation}\label{m2} 
\sum_{F\in \calF_d} {\calN} (F\virgule F;B) = \left( \sum_{F\in \calF_d} A_{F } W_F\right)\cdot B^{2/d} +O_{\calF,d,\varepsilon}  \left( B^{\kappa_d +\varepsilon} \right),
\end{equation}
and the upper bound
\begin{equation}\label{m3}
{\calN} (F\virgule F;B) = O_F\left( B^{2/d^\dag}\right)\text { if } \deg F \geq d^\dag.
\end{equation}
Hence the second term on the right--hand side of \eqref{m1} is bounded as follows
$$
 \sum_{n=d^\dag}^{d^\dag +d_0} \sum_{F \in \calF_n} {\calN} (F\virgule F; B) =
 O_{\calF,d} \left( B^{2/d^\dag}\right).
$$
To deal with the third term on the right--hand side of \eqref{m1}, we interchange the summations to write
\begin{multline}\label{657}
\sharp\, \left( \underset{n> d^\dag +d_0}{\bigcup}\, \underset{ F\in \calF_n}{\bigcup} \left( F(\calZ_A) \cap [-B,B] \right) \right)  \\
\leq \sharp\, \left\{ (n, F, x, y): n > d^\dag +d_0, F \in \calF_n, (x,y) \in \calZ_A,  \vert F(x,y) \vert \leq B\right\}. 
\end{multline}
The  condition 
(v) in Definition   \ref{agreable}  of the 
$(A,A_1, d_0, d_1, \kappa)$--regularity  of  $\calF$ produces a bound for  $n$,  by the sequence of inequalities
\begin{equation}\label{660}
\kappa<A \leq \max\{\vert x\vert, \vert y \vert \}\leq \kappa \vert F(x,y)\vert^{\frac 1{n-d_0}}\leq \kappa B^{\frac 1{n-d_0}}\leq \kappa B^{\frac 1{d^\dag +1}},
\end{equation}
which implies the inequality
$$
n\leq d_0 + \frac{\log B}{\log (A/\kappa)}\cdotp
$$
Furthermore the inequalities  \eqref{660} implies 
$$
\max \{ \vert x \vert, \vert y \vert \} \leq \kappa B^{ 1/(d^\dag +1)}.
$$
Combining the above inequalities, we deduce that the cardinality of the quadruples  $(n, F,x,y)$  
in the right--hand side of  \eqref{657} is bounded from above by 
\begin{equation}\label{m4}
\left( d_0 + \frac{\log B}{\log (A/\kappa)}\right)^{A_1 } \left( 1 +2 \kappa B^{1/({d^\dag +1})}\right)^2 = o_\calF \left( B^{   2/{d^\dag}}\right). 
\end{equation}
Gathering  \eqref{m1}, \eqref{m2}, \eqref{m3} and  \eqref{m4}, we finally obtain the upper bound 
\begin{equation}\label{m5}
{\calR}_{\geq d}( \calF,B,A ) \leq \left( \sum_{F\in \calF_d} A_{F } W_F\right)\cdot B^{2/d} +O_{\calF,A,d,\varepsilon} \left( B^{\kappa_d+ \varepsilon } \right) +
O_{\calF,d}\left( B^{2/d^\dag}\right).
\end{equation}
Comparing  \eqref{m0} and \eqref{m5} and recalling the inequality \eqref{kappa<eta'},  we complete the proof of Theorem~\ref{Thm-on-R}.
\section{Proof of Theorem  \ref{CaseofQ+}}

\subsection{The family $\calQ^+$ is $(2,1,0,4,1)$--regular} 
Our first purpose is to prove the following

\begin{proposition}\label{4.1}
The family $\calQ^+$ is $(2,1, 0, 4, 1)$--regular.
\end{proposition}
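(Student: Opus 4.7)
The plan is to verify all five conditions of Definition~\ref{agreable} for $(A,A_1,d_0,d_1,\kappa)=(2,1,0,4,1)$. Conditions (i) and (ii) are immediate: $\calQ^+$ is infinite, each $Q^+_{k,\nu}$ has even degree $2k\ge 4$, and the discriminant is nonzero since the complex roots $\pm i\sqrt{\mu_n}$ $(n\ne\nu)$ are pairwise distinct, the $\mu_n$ being distinct positive reals. For (iii), $\sharp\calQ^+_{2k}=k+1\le 2k=(2k)^{A_1}$ with $A_1=1$. Condition (v) will reduce to the pointwise bound $x^2+\mu_n y^2\ge \max(|x|,|y|)^2$, which is immediate from $\mu_n\ge 1$; taking the product over the $k$ factors yields $|Q^+_{k,\nu}(x,y)|\ge \max(|x|,|y|)^{2k}$, i.e., $\max(|x|,|y|)\le |F(x,y)|^{1/(d-d_0)}$ with $d=2k$, $d_0=0$, $\kappa=1$, and $A=2>\kappa=1$ as required.

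The substantive part is condition (iv): I must show that $Q^+_{k,\nu}$ and $Q^+_{k',\nu'}$ are isomorphic only when $(k,\nu)=(k',\nu')$. Equality of degrees forces $k=k'$, so I assume $Q^+_{k,\nu}\circ\gamma=Q^+_{k,\nu'}$ for some $\gamma\in\Gl(2,\Q)$. The plan is to exploit the $\Q$-irreducibility of each factor $X^2+\mu_n Y^2$ (which holds since $-\mu_n<0$) together with unique factorization in $\Q[X,Y]$: composition with the invertible $\gamma$ preserves irreducibility, so there must exist a bijection $\sigma$ from $\{1,\dots,k+1\}\setminus\{\nu\}$ to $\{1,\dots,k+1\}\setminus\{\nu'\}$ and nonzero rational scalars $c_n$ with
$$
(X^2+\mu_n Y^2)\circ\gamma=c_n(X^2+\mu_{\sigma(n)}Y^2).
$$
Comparing discriminants on both sides, using the elementary identity $\mathrm{disc}(L\circ\gamma)=(\det\gamma)^2\,\mathrm{disc}(L)$ for a binary quadratic form $L$, gives $-4\mu_n(\det\gamma)^2=-4c_n^2\mu_{\sigma(n)}$, so $\mu_{\sigma(n)}/\mu_n=(\det\gamma/c_n)^2$ is a square in $\Q^*$. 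Since $\mu_n$ and $\mu_{\sigma(n)}$ are positive squarefree integers, this forces $\mu_{\sigma(n)}=\mu_n$, hence $\sigma(n)=n$. The two index sets must therefore coincide, yielding $\nu=\nu'$.

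The only step requiring real care is (iv): one must argue from $Q^+_{k,\nu}\circ\gamma=\prod_{n\ne\nu}\bigl((X^2+\mu_n Y^2)\circ\gamma\bigr)$ to a clean matching of irreducible factors via unique factorization, and then invoke the squarefree hypothesis on the $\mu_n$ to convert the square-in-$\Q^*$ relation into the equality $\mu_{\sigma(n)}=\mu_n$. The discriminant computation is the key quantitative input but is routine. Everything else is bookkeeping.
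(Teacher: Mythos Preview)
Your proof is correct. Parts (i), (ii), (iii), (v) match the paper's argument essentially verbatim. For (iv) you take a different but equally valid route: the paper works with the induced homography $\tilde\gamma$ on $\PP^1(\C)$, noting that $\tilde\gamma(i\sqrt{\mu_{\nu'}})$ must be a zero of $Q^+_{d,\nu'}$, hence some $\pm i\sqrt{\mu_n}$ with $n\ne\nu'$, while simultaneously lying in $\Q(i\sqrt{\mu_{\nu'}})$ because $\tilde\gamma$ has rational coefficients; this contradicts $\Q(i\sqrt{\mu_n})\ne\Q(i\sqrt{\mu_{n'}})$ for distinct squarefree integers $\mu_n,\mu_{n'}$. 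You instead stay inside the polynomial ring $\Q[X,Y]$, match irreducible quadratic factors via unique factorization, and extract the invariant through the discriminant identity $\mathrm{disc}(L\circ\gamma)=(\det\gamma)^2\,\mathrm{disc}(L)$. Both arguments ultimately encode the same fact---the square class of $-\mu_n$ in $\Q^*/(\Q^*)^2$ (equivalently the field $\Q(\sqrt{-\mu_n})$) determines the squarefree $\mu_n$---but yours avoids the projective/homography language entirely at the modest cost of the discriminant computation, while the paper's approach is slightly more direct in that a single well-chosen root already gives the contradiction.
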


 \begin{proof} Several times, we will use the following property satisfied by two positive  distinct squarefree numbers
\begin{equation}\label{algebra1}
n\ne n' \Rightarrow \Q\left( i\sqrt{ {\mu}_n}\right) \ne  \Q\left( i\sqrt{ {\mu}_{n'}}\right).
\end{equation}
We now check   each of the items
of Definition \ref{agreable} of a regular family.

$\bullet$ The items {\rm (i)} and {\rm (ii)} are trivial.

$\bullet $ The family $\calQ^+$ contains no element with odd degree $d$. By contrast if this degree $d \geq 4$ is even, the family contains $d/2 +1$ binary forms of degree $d$.

Thus the item {\rm (iii)} is verified with $A_1 =1$.

$\bullet $ For the item {\rm (iv)} we proceed as follows.  Suppose that there are two isomorphic  forms $F$ and $F'$ in $\calQ^+$. Necessarily they have the same degree
$2d$. So there exist $1\leq \nu < \nu'\leq d+1$ and a matrix $\gamma \in {\Gl}(2, \Q)$, written as in \eqref{defgamma},  such that
$$
Q_{d, \nu}^+ =Q_{d, \nu'}^+ \circ \gamma.
$$
Let $\tilde \gamma$ be the homography attached to $\gamma$. This homography 
\begin{equation}\label{gammatilde}
\tilde \gamma:z\in \PP^1(\C)\mapsto \frac{a_1z+a_2}{a_3z+a_4}
\end{equation}
induces a bijection   between the set of zeroes $\calZ (Q_{d, \nu}^+)$ (in $\PP^1 (\C)$)  of $Q_{d, \nu}^+$
and the set of zeroes $\calZ (Q_{d, \nu'}^+)$. So, $\tilde \gamma (i\sqrt {{\mu}_{\nu'}})$  is  a zero of $Q_{d, \nu'}^+$, hence is one of  $\pm i\sqrt {{\mu}_n}$ with $n\not=\nu'$, which contradicts \eqref{algebra1}.

$\bullet $ The definition \eqref{defQ+} implies that $Q_{d, \nu}^+ (x,y)=0$ if and only if $(x,y)=(0,0)$. Furthermore, by positivity, we have
the lower bound 
$$
\left\vert Q_{d, \nu}^+ (x,y)\right\vert \geq \left(\max\{ \vert x\vert^2, \vert y \vert^2\}
\right)^d = \left( \max \{ \vert x \vert, \vert y \vert \} \right)^{\deg Q_{d,\nu}^+}.
$$
The above inequality implies
$$
\max\{ \vert x \vert, \vert y \vert \}\leq   \left\vert Q_{d, \nu}^+ (x,y)\right\vert^{ 1/{\deg Q_{d,\nu}^+}},
$$
which means the item {\rm (v)} is satisfied for $A=2$,  $d_0 =0$, $d_1=4$  and $\kappa =1$.
\end{proof}

\subsection{Triviality of the group  ${\Aut} ( Q_{d, \nu}^+,  \Q)$}  
We now prove
\begin{proposition}\label{4.2} For every $d\geq 2$ and $1\leq \nu \leq d+1$, one has
$$
{\Aut} (Q_{d, \nu}^+, \Q) =\left\{\begin{pmatrix} \pm 1 &0\\
0 & \pm 1
\end{pmatrix}\right\} 
$$
(Klein group of order $4$). 
\end{proposition}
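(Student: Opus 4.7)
The plan is to analyse the homography $\tilde\gamma\in\PGl(2,\Q)$ attached to $\gamma$, as in \eqref{gammatilde}. Any $\gamma\in\Aut(Q^+_{d,\nu},\Q)$ gives rise to a $\tilde\gamma$ that permutes the zero set of $Q^+_{d,\nu}$ in $\PP^1(\C)$, namely $\{\pm i\sqrt{{\mu}_n}:1\le n\le d+1,\,n\ne\nu\}$. Reusing the rationality argument already employed to prove item {\rm (iv)} of Proposition \ref{4.1}, the value $\tilde\gamma(i\sqrt{{\mu}_n})$ lies in $\Q(i\sqrt{{\mu}_n})$, and \eqref{algebra1} then forces $\tilde\gamma$ to preserve each pair $\{\pm i\sqrt{{\mu}_n}\}$ setwise. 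Consequently there exist signs $\epsilon_n\in\{\pm1\}$ such that $\tilde\gamma(\pm i\sqrt{{\mu}_n})=\pm\epsilon_n i\sqrt{{\mu}_n}$ for every $n\ne\nu$, the coherence of the signs within each pair coming from the fact that the coefficients of $\tilde\gamma$ are real.

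The core of the argument is then to show that all these signs coincide. Since $d\ge 2$, at least two admissible indices are available. Suppose for contradiction that $n_1\ne n_2$ (both $\ne\nu$) satisfy $\epsilon_{n_1}=1$ and $\epsilon_{n_2}=-1$. A direct computation shows that any rational homography fixing both $i\sqrt{{\mu}_{n_1}}$ and $-i\sqrt{{\mu}_{n_1}}$ has the form $\tilde\gamma(z)=(az-c{\mu}_{n_1})/(cz+a)$ with $(a,c)\in\Q^2\setminus\{(0,0)\}$. Imposing the further condition $\tilde\gamma(i\sqrt{{\mu}_{n_2}})=-i\sqrt{{\mu}_{n_2}}$ leads to $2ai\sqrt{{\mu}_{n_2}}=c({\mu}_{n_1}+{\mu}_{n_2})$; separating real and imaginary parts and using ${\mu}_{n_1}+{\mu}_{n_2}>0$ yields $a=c=0$, contradicting the invertibility of $\gamma$.

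Hence only two uniform possibilities remain. If $\epsilon_n=1$ for every $n\ne\nu$, then $\tilde\gamma$ fixes at least four distinct points of $\PP^1(\C)$, so $\tilde\gamma=\Id$ and $\gamma=c\Id$ for some $c\in\Q^*$. If $\epsilon_n=-1$ for every $n\ne\nu$, then $\tilde\gamma$ agrees with $z\mapsto -z$ at the same points, whence $\tilde\gamma(z)=-z$ and $\gamma$ is a scalar multiple of $\begin{pmatrix}1&0\\0&-1\end{pmatrix}$. In both situations the homogeneity $Q^+_{d,\nu}\circ\gamma=c^{2d}Q^+_{d,\nu}$, combined with the automorphism condition $Q^+_{d,\nu}\circ\gamma=Q^+_{d,\nu}$, forces $c^{2d}=1$, and therefore $c=\pm1$. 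The four resulting matrices are precisely $\begin{pmatrix}\pm1&0\\0&\pm1\end{pmatrix}$, the announced Klein group. The only delicate point is the rigidity analysis in the mixed case, but once $\tilde\gamma$ is written in the general form fixing a prescribed pair of complex-conjugate imaginary points, it collapses to the elementary fact that a nonzero purely imaginary number cannot be real.
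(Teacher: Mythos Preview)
Your proof is correct and follows essentially the same strategy as the paper's: pass to the homography $\tilde\gamma$, use \eqref{algebra1} to force each conjugate pair $\{\pm i\sqrt{\mu_n}\}$ to be preserved with a sign $\epsilon_n$, derive a contradiction from mixed signs, and then pin down the scalar. The only cosmetic differences are that the paper writes out the two conditions $\epsilon_m=1$, $\epsilon_n=-1$ simultaneously (obtaining four linear equations in $a_1,a_2,a_3,a_4$) rather than first parametrising the stabiliser of one pair, and it determines the scalar $\tau$ by direct identification in \eqref{Q=Q} rather than via the homogeneity relation $c^{2d}=1$.
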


 \begin{proof} The four elements 
$$
\begin{pmatrix}  1 &0\\
0 &  1
\end{pmatrix} ,\quad\begin{pmatrix} - 1 &0\\
0 & - 1
\end{pmatrix} ,\quad\begin{pmatrix} - 1 &0\\
0 &  1
\end{pmatrix} ,\quad
\begin{pmatrix}  1 &0\\
0 & -1
\end{pmatrix}
$$
in ${\Gl}(2, \Q)$ clearly belong to ${\Aut} (Q_{d, \nu}^+, \Q) $. 
Conversely, let $\gamma=\begin{pmatrix}a_1&a_2\\a_3&a_4\end{pmatrix} \in {\Gl}(2, \Q)$ and let $Q_{d, \nu}^+$ be such that
\begin{equation}\label{Q=Q}
Q_{d,\nu}^+ \circ \gamma = Q_{d, \nu}^+.
\end{equation}
The set of zeroes $ \calZ (Q_{d,\nu}^+)$ is stable by the homography $\tilde \gamma$ attached to $\gamma$. Appealing to \eqref{algebra1}, we deduce
$$
\tilde \gamma \left(i \sqrt{{\mu}_n}\right) = \varepsilon_n i  \sqrt{{\mu}_n}, ( 1\leq n \leq d+1, \, n\ne \nu),
$$
where $\varepsilon_n = \pm 1$. We now prove that the value of  $\varepsilon_n$ is independent from $n$. Indeed, suppose that there exist $m$ and $n$ 
such that $\varepsilon_m =1$ and $\varepsilon_n =-1$. Returning to the explicit expression of $\tilde \gamma$ (see \eqref{gammatilde}), we obtain $$
\begin{cases}
a_1i \sqrt{{\mu}_m }+a_2 = i \sqrt{{\mu}_m} \left( a_3 i \sqrt{{\mu}_m} +a_4 \right)
 \\ 
 a_1i \sqrt{{\mu}_n }+a_2  = -i \sqrt{{\mu}_n} \left( a_3 i \sqrt{{\mu}_n} +a_4 \right).\\
\end{cases}
$$
Since $a_1$, $a_2 $, $a_3$ and $a_4$ are rational numbers, we deduce the four  equalities
$$
\begin{cases}
a_2 = -a_3\, {\mu}_m\\
a_2 = a_3 \, {\mu}_n\\
 a_1 =a_4\\
 a_1=-a_4.
\end{cases}
$$
These equalities imply  $(a_1,a_2 ,a_3,a_4)=(0,0,0,0)$ which is forbidden. So  we have $\tilde \gamma (z) = \varepsilon  z$, for some fixed $\varepsilon \in \{ \pm 1\}$. This means that for some $\tau \in \Q$, we have
$$\gamma = \begin{pmatrix} \varepsilon  \tau &0\\
0 &   \tau
\end{pmatrix} .
$$
By identification in \eqref{Q=Q}, we find that $\tau = \pm1$.
\end{proof}

\subsection{Estimating the number  of  images  by  $\calQ^+$ of $(x,y)$ with $\max \{ \vert x \vert, \vert y \vert\} \ge 2$} 
For the family $\calQ^+$, one has $(2d)^\dag =2d+2$. Combining Corollary \ref{11/21/4}, Propositions~\ref{4.1} and  \ref{4.2} and the equality  \eqref{W=1W=1}, we proved the following

\begin{proposition}\label{4.3} For every $d\geq 2$, one has the equality 
$$
\mathcal R_{\geq 2d} (\mathcal Q^+, B,2) =\frac 14 \left( \sum_{F\in \mathcal Q_{2d}^+} A_F\right) \cdot B^{1/d} + O_{\lambda,d,\varepsilon}\left(B^{\vartheta_{2d} +\varepsilon}\right) +O_{\lambda,d}\left( B^{1/(d+1)}\right).
$$
\end{proposition}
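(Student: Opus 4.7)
The proof of Proposition~\ref{4.3} should be a direct assembly of the pieces that have just been established. The plan is to recognize that every hypothesis of Corollary~\ref{11/21/4} has been verified in the previous subsections, so that the formula there applies verbatim with $d$ replaced by $2d$, and then to translate the generic coefficient, the exponents, and the value of $d^\dagger$ into the shape requested in the statement.

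First, I would invoke Proposition~\ref{4.1}, which says that $\calQ^+$ is $(2,1,0,4,1)$--regular, so that the ambient hypothesis in the statement of Theorem~\ref{Thm-on-R} (and hence of Corollary~\ref{11/21/4}) is satisfied with $A=2$, $d_1 = 4$. Since we require $2d \geq \max\{3,d_1\} = 4$, and the statement is for $d \geq 2$, this condition is satisfied. Next, Proposition~\ref{4.2} identifies $\Aut(Q_{d,\nu}^+,\Q)$ with the Klein group $\bigl\{\bigl(\begin{smallmatrix}\pm1&0\\0&\pm1\end{smallmatrix}\bigr)\bigr\}$ for every admissible $\nu$; therefore $\calQ^+_{2d}$ satisfies condition C3 of the corollary, and by the third identity in \eqref{W=1W=1} we have $W_F = 1/4$ for every $F\in\calQ^+_{2d}$. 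This pins down the coefficient $C_{2d} = 1/4$ and factors $W_F$ out of the main term.

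The remaining ingredient is the value of $d^\dagger$ attached to $\calQ^+$ at the level $2d$. By construction $\calQ^+_n = \emptyset$ whenever $n$ is odd, and $\calQ^+_{2e}\neq\emptyset$ for every $e\geq 2$; hence the smallest degree strictly greater than $2d$ with $\calQ^+_{d'}\neq \emptyset$ is $d' = 2d+2$, so $(2d)^\dagger = 2d+2$. Consequently the exponents in Corollary~\ref{11/21/4} specialize to $2/(2d)=1/d$ in the main term and $2/(2d+2) = 1/(d+1)$ in the second error term, while the first error term is the generic $B^{\vartheta_{2d}+\varepsilon}$.

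Putting these observations together and substituting into the conclusion of Corollary~\ref{11/21/4} gives exactly
\[
\calR_{\ge 2d}(\calQ^+,B,2) = \frac{1}{4}\left(\sum_{F\in\calQ^+_{2d}} A_F\right) B^{1/d} + O_{\lambda,d,\varepsilon}\!\left(B^{\vartheta_{2d}+\varepsilon}\right) + O_{\lambda,d}\!\left(B^{1/(d+1)}\right),
\]
where the subscript $\lambda$ in the implicit constants arises from the fact that our control on the squarefree sequence $(\mu_n)$, and hence on the coefficients of the forms $Q^+_{d,\nu}$, depends on the constant $\lambda$ introduced in \eqref{mun<}. There is essentially no obstacle here; the only points to be careful about are (a) that the hypothesis $d\geq \max\{3,d_1\}$ in Theorem~\ref{Thm-on-R} is applied at the degree $2d$ rather than at $d$ (which is fine since $2d\geq 4 = d_1$), and (b) that the $\lambda$--dependence is correctly tracked through Proposition~\ref{4.1}. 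No additional analytic work is required beyond the invocations of the corollary and of \eqref{W=1W=1}.
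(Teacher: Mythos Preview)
Your proposal is correct and follows essentially the same approach as the paper: combine Proposition~\ref{4.1} (regularity), Proposition~\ref{4.2} (condition C3), the value $W_F=1/4$ from \eqref{W=1W=1}, and the observation $(2d)^\dagger=2d+2$, then apply Corollary~\ref{11/21/4} with degree $2d$. The paper's own proof is a one-line reference to exactly these ingredients, so your more detailed verification of the hypotheses (in particular the check $2d\ge d_1=4$) is a faithful expansion of that sketch.
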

\subsection{Estimating the number of images by $\mathcal Q^+$ of $(x,y)$ with $\max \{ \vert x \vert, \vert y \vert\} < 2$}  \label{smallQ+} 
The difference
\begin{equation}\label{differenceQ+}
\mathcal R_{\geq 2d} (\mathcal Q^+, B,0)-\mathcal R_{\geq 2d} (\mathcal Q^+, B,2)
\end{equation}
is bounded from above by  the cardinality of the set
\begin{multline}\label{boundingd'}
\{ m\; :\; 0\le m\leq B,\, m= Q_{d', \nu}^+ (\pm 1, \pm 1),\,  d' \geq d, \, 1\leq \nu  \leq d'+1\} \\ 
\cup \{ m\; :\; 0\le m\leq B,\, m= Q_{d', \nu}^+
 (0, \pm 1),\,  d' \geq d, \, 1\leq \nu  \leq d'+1\}
\cup \{0, 1\}.
\end{multline}
For every $d'$ and $1\leq \nu \leq d'+1$, one has the equality
$$ Q_{d', \nu}^+(\pm 1, \pm 1)
\geq \prod_{1\leq n \leq d'}\left( 1+n^2\right) \geq (d'\, !)^2.
$$
This implies that the inequality $Q_{d', \nu}^+ (\pm 1, \pm1 )\leq B$ can only hold if $d' =O \left (\log B\right)$. So  the cardinality of the first set in \eqref{boundingd'} is bounded by 
$O(\log^2 B)$. The same bound also applies to the second set. Combining Proposition  \ref{4.3} with \eqref{differenceQ+} we obtain

\begin{proposition}\label{4.4} For every $d\geq 2$ and for every $\varepsilon >0$,  one has the equality 
$$
\calR_{\geq 2d} (\calQ^+, B,0) =\frac 14 \left( \, \sum_{F\in \calQ_{2d}^+} A_F\right)  \cdot B^{1/d}+ O_{ \lambda, d,\varepsilon}\left(B^{\vartheta_{2d} +\varepsilon}\right) +O_{\lambda,d}\left( B^{1/(d+1)}\right).
$$
\end{proposition}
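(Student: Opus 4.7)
The plan is to bootstrap from Proposition \ref{4.3}, which already establishes the stated asymptotic formula for $\calR_{\geq 2d}(\calQ^+,B,2)$ rather than $\calR_{\geq 2d}(\calQ^+,B,0)$. The only task is therefore to control the extra contribution that comes from pairs $(x,y)$ with $\max\{|x|,|y|\}<2$, i.e.\ from $(x,y)\in\{0,\pm1\}^2$.

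First I would write the obvious trivial inclusion
\[
\calR_{\geq 2d}(\calQ^+,B,0)-\calR_{\geq 2d}(\calQ^+,B,2)
\;\le\;\sharp\,\calE(B),
\]
where $\calE(B)$ is the set of values $Q_{d',\nu}^+(x,y)$ with $\max\{|x|,|y|\}<2$, $d'\ge d$ and $1\le\nu\le d'+1$, together with the exceptional values $0$ and $1$. By homogeneity and symmetry of $Q_{d',\nu}^+$ in $(X,Y)$, it is enough to handle the two shapes $(\pm1,\pm1)$ and $(0,\pm1)$, since $(\pm1,0)$ gives the product of empty factors equal to $1$. This is exactly the decomposition already written in the block \eqref{boundingd'}.

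Next I would estimate the two relevant sets. For $(x,y)=(\pm1,\pm1)$ one has the uniform lower bound
\[
Q_{d',\nu}^+(\pm1,\pm1)=\prod_{\substack{1\le n\le d'+1\\ n\ne\nu}}(1+\mu_n)\;\ge\;\prod_{1\le n\le d'}(1+n^2)\;\ge\;(d'!)^2,
\]
using $\mu_n\ge n$ (which follows from $\mu_n$ being the $n$-th term of an increasing sequence of positive integers). The constraint $Q_{d',\nu}^+(\pm1,\pm1)\le B$ therefore forces $d'=O(\log B/\log\log B)$; summing over $d'$ and over $\nu\in\{1,\dots,d'+1\}$ gives $O(\log^2 B)$ values. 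An analogous argument handles $(x,y)=(0,\pm1)$, since $Q_{d',\nu}^+(0,\pm1)=\prod_{n\ne\nu}\mu_n\ge(d'-1)!\,$, giving again $O(\log^2 B)$.

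Combining these two bounds with $|\calE(B)|=O(\log^2 B)$ yields
\[
\calR_{\geq 2d}(\calQ^+,B,0)=\calR_{\geq 2d}(\calQ^+,B,2)+O(\log^2 B),
\]
and the $O(\log^2 B)$ is absorbed in the two existing error terms $O_{\lambda,d,\varepsilon}(B^{\vartheta_{2d}+\varepsilon})+O_{\lambda,d}(B^{1/(d+1)})$ of Proposition \ref{4.3}. No step is a genuine obstacle; the only thing one must be slightly careful about is that the lower bound $\mu_n\ge n$ is used (true since the $\mu_n$ are distinct positive integers in increasing order), and that the count of tuples $(d',\nu)$ compatible with the size restriction is polylogarithmic in $B$, which is far below any of the error terms already present.
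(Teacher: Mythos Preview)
Your approach is essentially identical to the paper's: bound the difference $\calR_{\geq 2d}(\calQ^+,B,0)-\calR_{\geq 2d}(\calQ^+,B,2)$ by the set of values taken at points with $\max\{|x|,|y|\}\le 1$, use the factorial-type lower bound on $Q_{d',\nu}^+(\pm1,\pm1)$ and $Q_{d',\nu}^+(0,\pm1)$ to force $d'=O(\log B)$, and absorb the resulting $O(\log^2 B)$ into the existing error terms from Proposition~\ref{4.3}. One small slip: $Q_{d',\nu}^+$ is not symmetric in $(X,Y)$, but your treatment of $(\pm1,0)$ is still correct since that value is indeed~$1$.
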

\subsection{Some results on $A_F$ for $F\in \calQ^+$} By the definition \eqref{defAF}, the fundamental domain attached to $Q_{d, \nu}^+$ is
\begin{equation}\label{924}
\calD (Q_{d, \nu}^+):=\bigl\{ (x,y) \in \R^2 : \prod_{\atop{1\leq n\leq d+1}{ n \ne \nu}} \left (x^2 +{\mu}_n y^2\right) \leq 1 \bigr\}.
\end{equation}
Our purpose is to estimate the sum
$$
{\Coef}(\calQ^+, 2d):= \sum_{F\in \calQ_{2d}^+} A_F
 $$
 as $d\to\infty$.
We use integration techniques to express this sum of fundamental areas as follows.

\begin{lemma}\label{Fubini}
For any $d \geq 2$  and $1\le \nu\le d+1$, one has the equality
$$
 A_{Q_{d,\nu}^+} = \int_{-\infty}^\infty \frac{(u^2+\mu_\nu)^{1/d}}{G_d(u)^{1/d}}  \rmd  u,
 $$
 where 
$$ G_d(u) :=\prod_{n=1}^{ d+1} \left( u^2 + \mu_n\right).
$$ 
Hence
  $$
{\Coef}(\calQ^+, 2d) = \int_{-\infty}^{\infty} \frac
{\sum_{1\leq  n \leq d+1} \left( u^2+\mu_n\right)^{1/d}}
{G_d(u)^{1/d}}
\, {\rmd} u.
 $$
 \end{lemma}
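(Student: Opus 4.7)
\medskip

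\noindent\textbf{Proof plan.} The plan is to reduce the double integral defining $A_F$ (with $F=Q_{d,\nu}^+$) to a one--dimensional integral by the substitution $x=uy$ (which, since $F$ is homogeneous of degree~$2d$, trivialises the dependence on $y$), and then to recognise the resulting integrand as the one appearing in the statement.

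First, I would start from
$$
A_{Q_{d,\nu}^+} = \iint_{\calD(Q_{d,\nu}^+)} \rmd x\,\rmd y,
$$
with $\calD(Q_{d,\nu}^+)$ as in \eqref{924}. On the complement of the measure--zero line $\{y=0\}$ I change variables by setting $x=uy$, $y=y$, whose Jacobian is $|y|$. Because $Q_{d,\nu}^+$ is homogeneous of degree $2d$, one has $Q_{d,\nu}^+(uy,y)=y^{2d}\,Q_{d,\nu}^+(u,1)$, and since $Q_{d,\nu}^+$ is positive definite in $(x,y)$ we have $Q_{d,\nu}^+(u,1)>0$ for every $u\in\R$. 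The defining inequality in \eqref{924} therefore becomes
$$
|y|\le Q_{d,\nu}^+(u,1)^{-1/(2d)}.
$$
Integrating $|y|$ over this symmetric interval gives $Q_{d,\nu}^+(u,1)^{-1/d}$, so
$$
A_{Q_{d,\nu}^+}=\int_{-\infty}^{\infty} Q_{d,\nu}^+(u,1)^{-1/d}\,\rmd u.
$$
Convergence at infinity is automatic: $Q_{d,\nu}^+(u,1)\asymp u^{2d}$, so the integrand decays like $u^{-2}$.

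Next, I would rewrite
$$
Q_{d,\nu}^+(u,1)=\prod_{\substack{1\le n\le d+1\\ n\ne \nu}}(u^2+\mu_n)=\frac{G_d(u)}{u^2+\mu_\nu},
$$
which immediately yields the first claimed equality
$$
A_{Q_{d,\nu}^+}=\int_{-\infty}^{\infty}\frac{(u^2+\mu_\nu)^{1/d}}{G_d(u)^{1/d}}\,\rmd u.
$$
Finally, summing this identity over $\nu=1,\dots,d+1$ and interchanging the (finite) sum with the integral produces the second formula for $\Coef(\calQ^+,2d)$.

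The argument is essentially routine; the only point requiring any care is justifying the substitution $x=uy$ (avoiding the line $y=0$, which has measure zero) and checking integrability at infinity, both of which are straightforward from the positive--definiteness and the degree of $Q_{d,\nu}^+$. No step poses a serious obstacle.
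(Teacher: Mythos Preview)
Your proof is correct and follows essentially the same route as the paper: the same change of variables $x=uy$ (the paper writes $x=uv$, $y=v$), the same use of homogeneity to integrate out $y$, and the same summation over $\nu$ to obtain the formula for $\Coef(\calQ^+,2d)$. You include a few extra words on convergence and the Jacobian, but the argument is identical.
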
 
 
 \begin{proof} By \eqref{924} and by the change of variables $x=uv$, $y=v$ we have the equalities
 \begin{align*}
 A_{Q_{d,\nu}^+} & = \iint_{\calD (Q_{d,\nu}^+)} {\rmd}x \, {\rmd} y\\
 & = \iint_{ v^{2d}\prod_{\atop{1\leq n\leq d+1}{  n\ne \nu}} (u^2+\mu_n)\leq 1}  \vert v\vert \, {\rmd} u\, {\rmd} v\\
 & = \int_{-\infty}^\infty \frac{{\rmd}  u}{\prod_{\atop{1\leq n\leq d+1}{ n\ne \nu}} (u^2+\mu_n)^{1/d}}\cdotp
 \end{align*}
 Compare with \cite{B} p.~122. 
 Summing over all the $Q_{d, \nu}^+\in \calQ_{2d}^+$, we obtain the second formula of Lemma \ref{Fubini}.
 \end{proof}
We first give a lower bound of ${\Coef}(\calQ^+, 2d)$. We have
\begin{align}
{\Coef}(\calQ^+, 2d) & \geq  (d+1)  \int_{-\infty}^\infty \frac{(u^2 +\mu_1)^{1/d}}{G_d(u)^{1/d}} {\rmd } u\nonumber\\
& \geq  (d+1) \int_{-\infty}^\infty  \frac {{\rmd} u}{\prod_{2\leq n \leq d+1} (u^2+\mu_{n})^{1/d}}\nonumber\\
&\geq (d+1) \int_{-\infty} ^{\infty} \frac{{\rmd} u}{u^2+\mu_{d+1}}\nonumber\\
&\geq \pi\cdot \frac{d+1}{\sqrt{\mu_{d+1}}}\cdotp\label{--}
\end{align}
From our assumption $\mu_{d+1} \le \lambda(d+1)$ we deduce from  \eqref{--}  the  lower bound
\begin{equation}\label{987}
{\Coef}(\calQ^+, 2d) > \frac \pi  {\sqrt \lambda}\, \sqrt d. 
\end{equation}
For the upper bound, we write 
\begin{align*}
{\Coef}(\calQ^+, 2d) & \leq  (d+1)  \int_{-\infty}^\infty \frac{(u^2 +\mu_{d+1})^{1/d}}{G_d(u)^{1/d}} {\rmd } u\\
& \leq  (d+1) \int_{-\infty}^\infty  \frac {{\rmd} u}{\prod_{1\leq n \leq d} (u^2+\mu_{n})^{1/d}}\cdotp
\end{align*}
Using  H\" older's  inequality we deduce 
\begin{align*}
{\Coef}(\calQ^+, 2d) & \leq    (d+1)\prod_{n=1}^d \left( \int_{-\infty}^\infty  \frac {{\rmd} u}{ u^2+\mu_{n}}\right)^{1/d}
\\
&
\leq \pi (d+1)\prod_{n=1}^d \mu_{n}^{-1/(2d)}\le \pi\frac{d+1}	D
\end{align*}
with $D:= (d\, !)^{1/(2d)}$. Using Stirling formula \eqref{Equation:Stirling},  
we deduce  
$$
{\Coef}(\calQ^+, 2d) \leq  \pi   \sqrt{\rme} (\sqrt d +1).
$$
Combining with \eqref{987} we complete the proof of \eqref{sqrt6}.  Recalling Propositions 
\ref{4.2}  and \ref{4.4},  we conclude that the proof of Theorem \ref{CaseofQ+} is now complete.
\section{Proof of Theorem \ref{CaseofQ-}}

Recall that for $d\ge 2$ and $1\le \nu\le d+1$,  $Q^-_{d,\nu} $ denotes the following binary form of degree  $2d$
$$
 Q^-_{d,\nu} (X,Y)
=\prod_{\atop{1\leq n \leq d+1}{n\ne \nu}} \left( X^2 - {\mu}_n Y^2\right)
$$
and $ \calQ^-$  denotes the family
$$
 \calQ^-=\left\{ Q^-_{d, \nu} : d\geq 2,\, 1\leq \nu \leq d+1\right\}.
 $$

\subsection{The family $\calQ^-$ is $(A,1,2,2,2\rme\lambda)$--regular} 

Our goal in this subsection  is to prove the following

\begin{proposition}\label{5.1}
For $A>2\rme\lambda$, the family $\calQ^-$ is $(A,1,2,2,2\rme\lambda)$--regular. 
\end{proposition}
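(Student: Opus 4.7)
My plan is to verify the five conditions of Definition \ref{agreable} in turn, with the bulk of the argument devoted to condition (v).

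Conditions (i) and (ii) are immediate from the definition of $\calQ^-$. For (iii), since $\calQ^-_{2d'}$ has exactly $d'+1$ elements and $\calQ^-_d=\emptyset$ for odd $d$, one checks $|\calQ^-_d|\le d$ for all $d\ge 3$, so $A_1=1$ is admissible. Condition (iv) is modeled on Proposition~\ref{4.2}: an isomorphism $Q^-_{d',\nu}\circ\gamma = Q^-_{d',\nu'}$ forces the associated M\"obius transformation $\tilde\gamma\in\PGl(2,\Q)$ to map $\calZ(Q^-_{d',\nu})$ bijectively to $\calZ(Q^-_{d',\nu'})$. Because the zeros $\pm\sqrt{\mu_n}$ generate pairwise distinct quadratic fields (the $\mu_n$ being distinct squarefree integers $\ge 2$) and $\tilde\gamma$ has rational coefficients, one deduces $\tilde\gamma(\sqrt{\mu_n})=\pm\sqrt{\mu_n}$ for each $n\ne\nu$; comparing ranges then forces $\nu=\nu'$.

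For condition (v) with $F=Q^-_{d',\nu}$ of degree $2d'$ ($d'\ge 2$) and $M:=\max\{|x|,|y|\}\ge A>2\rme\lambda$, the target inequality is
\[
 M^{2d'-2} \le (2\rme\lambda)^{2d'-2}\,|F(x,y)|.
\]
Since $F$ is even in each variable I may assume $x,y\ge 0$. If $y=0$ then $F(x,0)=x^{2d'}$ and the bound is trivial. If $y\ge 1$ and $r:=x/y\le 1$, then $\mu_n\ge 2\ge 2r^2$ gives $|x^2-\mu_n y^2|\ge M^2$ for every $n$, whence $|F|\ge M^{2d'}$, far stronger than needed.

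The substantive case is $y\ge 1$ and $r>1$ (so $M=x$). I would partition $\{1,\dots,d'+1\}\setminus\{\nu\}$ as
\[
 S_1 = \{n:\mu_n \le r^2/2\},\ \ S_2 = \{n: r^2/2 < \mu_n < 2r^2\},\ \ S_3 = \{n: \mu_n \ge 2r^2\},
\]
obtaining at once $|x^2-\mu_n y^2|\ge M^2/2$ for $n\in S_1$ and $|x^2-\mu_n y^2|\ge M^2$ for $n\in S_3$. For $n\in S_2$ I would introduce the global minimizer $n_0$ of $|r^2-\mu_{n_0}|$ on $\{1,\dots,d'+1\}$. A key observation is that whenever $n_0$ lies outside $S_2\cup\{\nu\}$, a short case analysis already forces $|x^2-\mu_n y^2|\ge M^2/2$ for all mid-range $n$ (they are pushed into the subinterval $[3r^2/2,2r^2)$), so $|F|\ge(M^2/2)^{d'}$ and the inequality follows from $M\ge A>2\rme\lambda$. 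Otherwise $n_0\in S_2\cup\{\nu\}$, and for $n\in S_2\setminus\{n_0\}$ the triangle inequality combined with $|\mu_n-\mu_{n_0}|\ge|n-n_0|$ and $y^2>M^2/(2\mu_{n_0})\ge M^2/(2\lambda(d'+1))$ yields
\[
 |x^2-\mu_n y^2| \ge \frac{M^2|n-n_0|}{4\lambda(d'+1)},
\]
while the $n_0$-factor is either absent (when $n_0=\nu$) or at least $1$ as a nonzero integer (when $n_0\ne\nu$, since $\mu_{n_0}$ is not a square). Multiplying everything together produces $|F(x,y)|\ge M^{2(d'-1)}\,C$ for an explicit constant $C=C(d',\lambda,|S_2|,n_0)$.

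The main obstacle is verifying $C\ge(2\rme\lambda)^{-(2d'-2)}$ uniformly. The crucial combinatorial input is that the indices in $S_2$ form a consecutive block, so $\prod_{n\in S_2\setminus\{n_0\}}|n-n_0|$ equals $a!\,b!$ with $a+b=|S_2|-1$, bounded below by $\lfloor(|S_2|-1)/2\rfloor!\,\lceil(|S_2|-1)/2\rceil!$ and estimated via Stirling's inequality \eqref{Equation:Stirling}. After substituting $\mu_{n_0}\le\lambda(d'+1)$ and balancing the competing powers of $d'+1$, $\lambda$, $\rme$, and $2$, the delicate worst case (where $n_0\ne\nu$ and $|S_2|$ is maximal) reduces to the clean arithmetic inequality $\lambda^{d'-1}\rme^{d'-2}\ge 2$, which holds for every $d'\ge 2$ because $\lambda\ge\mu_1\ge 2$. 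The parallel case $n_0=\nu$ is more constrained geometrically: the combination ``$|S_2|=d'$ and $n_0=\nu$'' forces $\mu_{d'+1}<4\mu_\nu$, limiting $d'$ to a bounded range in which a direct Stirling estimate again closes the inequality, while for larger $d'$ one has $|S_2|$ bounded by a constant depending only on $\lambda$, so $(4\lambda(d'+1))^{-|S_2|}$ is only polynomial in $d'$ and the exponential factor $(2\rme\lambda)^{2d'-2}$ amply dominates.
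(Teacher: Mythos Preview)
Your approach to item (v) is genuinely different from the paper's and, at the level of ideas, sound. The paper works with the single sign--change index $n$ satisfying $\mu_n y^2<x^2<\mu_{n+1}y^2$ and bounds the two tails of the product by $(n-1)!$ and $(d-n)!$, invoking the auxiliary inequality $n!(d-n)!/n^d\ge \rme^{-(1+\rme^{-1})d}$ (their Lemma~\ref{Lemme:minoration}); it also splits off the subcase $|y|=1$. Your multiplicative window $(r^2/2,2r^2)$ together with the global minimizer $n_0$ and the triangle inequality $|r^2-\mu_n|\ge\tfrac12|\mu_n-\mu_{n_0}|$ is a clean alternative: the factors in $S_1\cup S_3$ are automatically $\ge M^2/2$, and the factors in $S_2\setminus\{n_0\}$ contribute a product of consecutive integer gaps that Stirling handles just as well. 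The paper's route is more hands--on but requires several sub-subcases; yours is more uniform at the cost of slightly looser constants.

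There is, however, a genuine slip in your final paragraph. The claim that in the case $n_0=\nu$ ``for larger $d'$ one has $|S_2|$ bounded by a constant depending only on $\lambda$'' is false: $|S_2|$ counts indices $n$ with $\mu_n\in(r^2/2,2r^2)$, and this can be proportional to $d'$ for suitable $r$, regardless of $d'$. What actually rescues the case $n_0=\nu$ is not a bound on $|S_2|$ but the extra factor of $M^2$ you already have: when $n_0=\nu$ every one of the $d'$ factors in the product carries an $M^2$, so $|F|\ge M^{2d'}\cdot C$ rather than $M^{2d'-2}\cdot C$. Using $M>A>2\rme\lambda$, this extra $M^2$ contributes at least $(2\rme\lambda)^2$, reducing the target to $(2\rme\lambda)^{2d'}\cdot 2^{-s_1}(4\lambda(d'+1))^{-s_2}\prod_{n\in S_2}|n-\nu|\ge 1$, which is verified by the same Stirling computation as in the case $n_0\ne\nu$ (indeed it is strictly easier, since you compare against $(2\rme\lambda)^{2d'}$ rather than $(2\rme\lambda)^{2d'-2}$). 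Once you replace the incorrect $|S_2|$--bounded claim with this observation, your argument goes through.
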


The proofs of items (i), (ii), (iii) and (iv) are the same as for Proposition \ref{4.1}: one only replaces \eqref{algebra1} with the remark that  for two positive  distinct squarefree numbers
$$
n\ne n' \Rightarrow \Q\left( \sqrt{ {\mu}_n}\right) \ne  \Q\left( \sqrt{ {\mu}_{n'}}\right).
$$
It remains to check the condition 
 {\rm (v)} in Definition \ref{agreable} of a regular family. 
We start with an auxiliary lemma.

\begin{lemma}\label{Lemme:minoration}
For $m$ and $d$ integers satisfying $1\le m<d$, we have
$$
\left(\frac d m -1\right)^{d-m}\ge \rme^{- \rme^{-1} m };
$$
further, for  $n$ an integer in the range $1\le n\le d$, we have
$$
\frac{n!(d-n)!}{n^d}\ge \rme^{-(1+\rme^{-1})d}.
$$
\end{lemma}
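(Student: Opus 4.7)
The plan is to handle the two assertions in succession, reducing each to an elementary analytic fact.

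For the first inequality, I would set $t=(d-m)/m>0$, so that $d/m-1=t$ and $d-m=tm$, whence
$$
\left(\frac{d}{m}-1\right)^{d-m}=t^{tm}=\bigl(t^{t}\bigr)^{m}.
$$
Thus it suffices to show $t^{t}\ge \rme^{-1/\rme}$ for every $t>0$. Taking logarithms, this is equivalent to $t\log t\ge -1/\rme$. The function $\varphi(t):=t\log t$ has derivative $\log t+1$, vanishing at $t=1/\rme$, and attains there its global minimum $\varphi(1/\rme)=-1/\rme$. Hence $t^{t}\ge \rme^{-1/\rme}$, and raising to the $m$-th power yields $(d/m-1)^{d-m}\ge \rme^{-m/\rme}$.

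For the second inequality, the plan is to apply Stirling's bound \eqref{Equation:Stirling} to both factorials. Since $\sqrt{2\pi k}\ge 1$ for every integer $k\ge 1$ and the corresponding inequality is trivially $1\ge 1$ when $k=0$, I obtain $k!\ge k^{k}\rme^{-k}$ (with the convention $0^{0}=1$) for every $k\ge 0$. Applied to $n!$ and $(d-n)!$ this gives
$$
\frac{n!(d-n)!}{n^{d}}\ge \frac{n^{n}\rme^{-n}\,(d-n)^{d-n}\rme^{-(d-n)}}{n^{d}}=\left(\frac{d-n}{n}\right)^{d-n}\rme^{-d}
$$
when $n<d$; when $n=d$ the left-hand side reduces to $d!/d^{d}\ge \rme^{-d}$, which already exceeds $\rme^{-(1+1/\rme)d}$.

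It remains to treat the range $1\le n<d$: applying the first inequality with $m=n$ gives
$$
\left(\frac{d-n}{n}\right)^{d-n}=\left(\frac{d}{n}-1\right)^{d-n}\ge \rme^{-n/\rme},
$$
so that
$$
\frac{n!(d-n)!}{n^{d}}\ge \rme^{-n/\rme}\rme^{-d}\ge \rme^{-d/\rme}\rme^{-d}=\rme^{-(1+1/\rme)d},
$$
using $n\le d$ in the last step. This completes the proof. No step looks particularly delicate; the only mild obstacle is keeping track of the boundary case $n=d$ and the convention for $0^{0}$ in Stirling's bound, both of which are harmless.
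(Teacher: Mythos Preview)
Your proof is correct and follows essentially the same approach as the paper: both reduce the first inequality to the elementary fact that $t\log t\ge -1/\rme$ (the paper parametrizes by $t=d-m$ and minimizes $(t/m)^t$, you by $t=(d-m)/m$ and minimize $t^t$, which is the same computation up to rescaling), and both derive the second inequality from the first via the Stirling bound $k!\ge k^k\rme^{-k}$. Your explicit treatment of the boundary case $n=d$ is a minor improvement in care over the paper, which leaves it implicit in the $0^0=1$ convention.
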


 The numerical value for $\rme^{1+\rme^{-1}}$ is $3.927\dots<\frac {79}{20}\cdotp$

\begin{proof}[Proof of Lemma \ref{Lemme:minoration}]
Set $t=d-m$, $f_m(t)=\left(\frac t m \right)^{t}$, $g_m(t)=\log f_m(t)=t\log t -  t\log m$. The derivative $g_m'(t)=1+\log t  -\log m$ of $g_m$  vanishes at  $t=m/\rme$, the minimum of  $f_m(t)$ on the interval $0<t\le d-1$ is reached at $t=m/\rme$, giving the value $(t/m)^t=\rme^{-t}=\rme^{-m/\rme}$.

The last part of Lemma \ref{Lemme:minoration} follows from the first one thanks to  Stirling's formula  \eqref{Equation:Stirling}:
$$
\frac{n!(d-n)!}{n^d}\ge \frac{n^n}{\rme^n}\cdot \frac{(d-n)^{d-n}}{\rme^{d-n}}\cdot \frac1 {n^d}
= \frac{ (d-n)^{d-n}}{n^{d-n} \rme^d }
\ge 
 \rme^{-d}\rme^{-\rme^{-1}n}\ge 
 \rme^{-(1+\rme^{-1})d}.
$$
\end{proof}

The  last inequality of  Lemma \ref{Lemme:minoration}  implies
\begin{equation}\label{Equation:consequenceLemme:minoration}
\bigl(n!(d-n)!\bigr)^{1/d} \ge    \rme^{-1-\rme^{-1}}\max\{n,d-n\}\ge \frac d {2\rme^{1+\rme^{-1}}}\ge \frac d {2\rme^{2}}\cdotp
\end{equation}

 \begin{proof}[End of the proof of Proposition \ref{5.1}]
 Let $d\ge 2$,   $1\le \nu\le d+1$,  $(x,y)\in\Z^2\setminus\{(0,0)\}$. Set $Q=Q^-_{d,\nu} (x,y)$. Our goal it to prove
 \begin{equation}\label{equation:minorationQ-}
  |Q| > (2\rme \lambda)^{-2d+2} \max\{|x|,|y|\}^{2d-2}.
   \end{equation}
  
  We consider three cases depending on the sign of the factors $x^2-\mu_ny^2$. 
  \\
  $\bullet$ If $x^2< \mu_1y^2$, all factors are negative.  For $2\le n\le d+1$ we have
  $$
  |x^2 - {\mu}_n y^2|={\mu}_n y^2  -x^2  > ({\mu}_n -\mu_1)y^2.
  $$
 When $\nu\ge 2$, we use  the lower bound $\mu_1y^2-x^2\ge 1$ and obtain 
  $$
  |Q|> (\mu_2- \mu_1)\cdots(\mu_{d+1}-  \mu_1)(\mu_\nu-  \mu_1)^{-1}y^{2d-2}\ge (d-1)! y^{2d-2}.
  $$
  For $\nu=1$ the stronger lower bound $|Q|> (d-1)! y^{2d}$ holds. 
  Hence  for $1\le \nu\le d+1$ we have
  $$
  |Q| > \frac{(d-1)! }{\mu_1^{d-1}} \, x^{2d-2}\ge \frac{(d-1)! }{\lambda^{d-1}} \, x^{2d-2}.
  $$
  The desired estimate \eqref{equation:minorationQ-} follows.
  \\
  $\bullet$ If $x^2> \mu_{d+1}y^2$, all factors are positive and $\max\{|x|,|y|\}=|x|$. For $m=1,\dots,d$ we have
  $$
  x^2-\mu_my^2 > (\mu_{d+1}- \mu_m) \frac{x^2}{\mu_{d+1}},
  $$
while for  $m=d+1$ we have  $x^2- \mu_{d+1}y^2\ge 1$. Hence  for $1\leq \nu \leq d$ we have 
 $$
  |Q| =Q> 
 (\mu_{d+1}- \mu_1)
 (\mu_{d+1}- \mu_2)
    \cdots
 (\mu_{d+1}- \mu_d)
 (\mu_{d+1}- \mu_\nu)^{-1} \frac{x^{2d-2}}{\mu_{d+1}^{d-1}}\ge \frac{d!x^{2d-2}}{\mu_{d+1}^{d-1}}\cdotp
      $$
     The lower bound  $|Q|> \frac{d!x^{2d-2}}{\mu_{d+1}^{d-1}}$ is also true when $\nu=d+1$ since in this case we have
   $$
  |Q| =Q> 
 (\mu_{d+1}- \mu_1)
 (\mu_{d+1}- \mu_2)
    \cdots
 (\mu_{d+1}- \mu_d) \frac{x^{2d}}{\mu_{d+1}^{d}}  \ge \frac{d!x^{2d}}{\mu_{d+1}^{d}} 
 $$   
 and $x^2> \mu_{d+1}y^2\ge \mu_{d+1}$.
Since $d!>d^d\rme^{-d}$ (see  \eqref{Equation:Stirling}) and  $\mu_{d+1}\le \lambda(d+1)$, we have 
$$
\frac {d!}{\mu_{d+1}^{d-1}}> \frac{d}{\rme \left(1+\frac 1 d\right)^{d-1}(\lambda\rme)^{d-1}}>
\frac 1{ (2\rme\lambda)^{2d-2}}\cdotp
$$
This implies  \eqref{equation:minorationQ-}.
 \\
  $\bullet$ Finally, assume that there is an $n$ in the interval $1\le n\le d$ such that 
  $$
  x^2 - \mu_{n+1}y^2<0<x^2 - \mu_ny^2.
  $$ 
Hence $y\not=0$ and  $\max\{|x|,|y|\}=|x|$. We have
  \begin{align}\label{Q1}
  &|Q|=\\
\notag
  &(x^2-\mu_1y^2)(x^2-\mu_2y^2)\cdots(x^2-\mu_ny^2)(\mu_{n+1}y^2-x^2)\cdots (\mu_{d+1}y^2-x^2)|x^2-\mu_{\nu}y^2|^{-1}
  \end{align}
with
\begin{align}\label{Q2}
(x^2-\mu_1y^2)(x^2-\mu_2y^2)\cdots(x^2-\mu_{n-1}y^2)&>
(\mu_n-\mu_1)(\mu_n-\mu_2)\cdots(\mu_n-\mu_{n-1}) y^{2n-2}
\\
\notag
& \ge (n-1)! y^{2n-2} 
\end{align}
and
\begin{align}\label{Q3}
(\mu_{n+2}y^2-x^2)\cdots (\mu_{d+1}y^2-x^2) &>
(\mu_{n+2}-\mu_{n+1})\cdots (\mu_{d+1}-\mu_{n+1}) y^{2d-2n}
\\
\notag
&
\ge(d-n)! y^{2d-2n}.
\end{align}
For $1\le \nu\le n-1$, we use the lower bound
\begin{equation}\label{Q4}
(x^2-\mu_1y^2)(x^2-\mu_2y^2)\cdots(x^2-\mu_{n-1}y^2)(x^2-\mu_\nu y^2)^{-1} > (n-2)! y^{2n-4},
\end{equation}
while for $ n+2\le \nu\le d+1$, we use the lower bound
\begin{equation}\label{Q5}
(\mu_{n+2}y^2-x^2)\cdots (\mu_{d+1}y^2-x^2) (\mu_\nu y^2-x^2)^{-1}>  (d-n-1)! y^{2d-2n-2}.
\end{equation}
It remains to estimate the product   $
(x^2-\mu_ny^2)(\mu_{n+1}y^2-x^2) $ of the two terms of the middle in \eqref{Q1}. We consider two cases.
\par
$\diamond$ 
 Assume $|y|\ge 2$.  If $\nu\in\{n,n+1\}$, we use the trivial lower bound
\begin{equation}\label{Q6}
(x^2-\mu_ny^2)(\mu_{n+1}y^2-x^2) |x^2-\mu_\nu y^2|^{-1}\ge 1,
\end{equation}
while if $\nu\le n-1$ or $\nu\ge n+2$ we use the lower bound
\begin{align}\label{Q7}
(x^2-\mu_ny^2)(\mu_{n+1}y^2-x^2)
&
\ge(x^2-\mu_ny^2)+(\mu_{n+1}y^2-x^2)-1
\\
\notag
&=
 (\mu_{n+1}-\mu_n)y^2-1 \ge y^2-1
\ge \frac 3 4 y^2.
\end{align}
$\circ$
For  $\nu\in\{n,n+1\}$, we deduce from \eqref{Q1}, \eqref{Q2}, \eqref{Q3}, \eqref{Q6}, 
$$
|Q|\ge (n-1)!(d-n)! y^{2d-2}.
$$
$\circ$
For $1\le \nu\le n-1$,  we deduce from 
\eqref{Q1},  \eqref{Q3}, \eqref{Q4}, \eqref{Q7}, 
$$
|Q|\ge \frac 3 4  (n-2)!(d-n)! y^{2d-2}.
$$ 
$\circ$
For $ n+2\le \nu\le d+1$ we deduce from 
\eqref{Q1}, \eqref{Q2},   \eqref{Q5},  \eqref{Q7},  
$$
|Q|\ge   \frac 3 4  (n-1)!(d-n-1)! y^{2d-2}.
$$ 
In the three cases, namely for $1\le\nu\le d+1$, we have, thanks to Lemma  \ref{Lemme:minoration}, 
$$
|Q|\ge   \frac {3 n!(d-n)!}{4  n(d-1)} y^{2d-2}\ge \frac {3n^{d-1}}{4(d-1)\rme^{(1+\rme^{-1})d}} y^{2d-2}.
$$
From $x^2< \mu_{n+1}y^2\le \lambda (n+1) y^2\le 2\lambda n y^2$ we deduce 
$$
|Q| >  \frac 3 {4(d-1)\rme^{(1+\rme^{-1})d}(2\lambda)^{d-1}} x^{2d-2}.
$$
Finally, since  $\lambda\ge  2$, we have 
\begin{equation}\label{avec4/3}
\frac 3{4(d-1) \rme^{(1+\rme^{-1})d}   } 
 >  \frac 1 { (2\rme^2 \lambda)^{d-1}}
\end{equation}
 for $d\ge 2$, and  \eqref{equation:minorationQ-} follows, 
\par
$\diamond$ If $y^2=1$, hence $\mu_n<x^2<\mu_{n+1}$,
using the trivial lower bound
$$
(x^2-\mu_n)(\mu_{n+1}-x^2)  \ge 1,
$$
and a combination of the above lower bounds
\eqref{Q1}, \eqref{Q2}, \eqref{Q3}, \eqref{Q4}, \eqref{Q5}
yields
$$
|Q|\ge \begin{cases}
(n-1)!(d-n)! & \hbox{if $\nu\in\{n,n+1\}$},
\\
(n-2)!(d-n)! & \hbox{if $1\le \nu\le n-1$},
\\
(n-1)!(d-n-1)! & \hbox{if $n+2\le \nu\le d+1$}.
\end{cases}
$$
For $1\le \nu\le d+1$, we obtain, thanks to Lemma  \ref{Lemme:minoration}, 
$$
|Q|\ge \frac{n!(d-n)!}{n(d-1)}\ge \frac {n^{d-1}}{(d-1)\rme^{(1+\rme^{-1})d}}\cdotp
$$
If $x^2\le 2\lambda$, using  $n\ge 1$, we deduce
$$
|Q|\ge   \frac {n^{d-1}}{(d-1)\rme^{(1+\rme^{-1})d}} \left( \frac{x^2}{2\lambda}\right)^{d-1},
$$
while if $x^2\ge 2\lambda$ we have, by \eqref{mun<}, the inequalities  $n> \frac {x^2}\lambda-1\ge \frac{x^2}{2\lambda}$, hence again
$$
|Q|\ge  \frac {x^{2d-2}}{(d-1) \rme^{(1+\rme^{-1})d}   (2\lambda)^{d-1}} \cdotp
$$
From \eqref{avec4/3} we deduce the estimate  \eqref{equation:minorationQ-}  also when $|y|=1$.
\par 
This completes the proof of Proposition \ref{5.1}.
\end{proof}

\subsection{Triviality of the group  ${\Aut} ( Q_{d, \nu}^-,  \Q)$}  

The following result is the analog of Proposition \ref{4.2}. The proof is the same, since $\mu_1\geq 2$ and the roots of $Q_{d, \nu}^-$ are all irrational numbers.  

\begin{proposition}\label{5.2} For every $d\geq 2$ and $1\leq \nu \leq d+1$, one has
$$
{\Aut} (Q_{d, \nu}^-, \Q) =\left\{\begin{pmatrix} \pm 1 &0\\
0 & \pm 1
\end{pmatrix}\right\} 
$$
(Klein group of order $4$). 
\end{proposition}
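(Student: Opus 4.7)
The plan is to mimic the proof of Proposition \ref{4.2}, adjusting only the place where the algebraic nature of the roots of $Q_{d,\nu}^+$ (purely imaginary) is replaced by that of the roots of $Q_{d,\nu}^-$ (irrational reals). The four diagonal matrices $\begin{pmatrix}\pm 1 & 0\\ 0 & \pm 1\end{pmatrix}$ visibly lie in ${\Aut}(Q_{d,\nu}^-,\Q)$ since each factor $X^2-\mu_nY^2$ is invariant under the sign changes $X\mapsto\pm X$, $Y\mapsto \pm Y$. So only the reverse inclusion needs work.

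Fix $\gamma=\begin{pmatrix}a_1&a_2\\a_3&a_4\end{pmatrix}\in\Gl(2,\Q)$ with $Q_{d,\nu}^-\circ\gamma=Q_{d,\nu}^-$. The associated homography $\tilde\gamma$ defined in \eqref{gammatilde} then permutes the zero set $\calZ(Q_{d,\nu}^-)=\{\pm\sqrt{\mu_n}:1\le n\le d+1,\,n\ne\nu\}$ in $\PP^1(\C)$. Here the key substitution for \eqref{algebra1} is the fact that for distinct positive squarefree integers one has $\Q(\sqrt{\mu_n})\ne\Q(\sqrt{\mu_{n'}})$, while the hypothesis $\mu_1\ge 2$ ensures $\sqrt{\mu_n}\notin\Q$ for every $n$. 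Since $\tilde\gamma$ has rational coefficients, $\tilde\gamma(\sqrt{\mu_n})$ must lie in $\Q(\sqrt{\mu_n})$, so its image within $\calZ(Q_{d,\nu}^-)$ can only be $\pm\sqrt{\mu_n}$, and hence for each $n\ne\nu$ there exists $\varepsilon_n\in\{\pm 1\}$ with $\tilde\gamma(\sqrt{\mu_n})=\varepsilon_n\sqrt{\mu_n}$.

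Next I would show that all the $\varepsilon_n$ coincide. Suppose towards a contradiction that $\varepsilon_m=1$ and $\varepsilon_n=-1$ for some $m\ne n$. Clearing denominators in the two equations $\tilde\gamma(\sqrt{\mu_m})=\sqrt{\mu_m}$ and $\tilde\gamma(\sqrt{\mu_n})=-\sqrt{\mu_n}$ and separating rational from irrational parts (using the irrationality of $\sqrt{\mu_m}$ and $\sqrt{\mu_n}$) yields
$$
a_2=a_3\mu_m,\quad a_1=a_4,\quad a_2=-a_3\mu_n,\quad a_1=-a_4.
$$
Adding the first and third gives $a_3(\mu_m+\mu_n)=0$, so $a_3=0$, whence $a_2=0$; the second and fourth then force $a_1=a_4=0$, contradicting $\gamma\in\Gl(2,\Q)$. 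Hence a single sign $\varepsilon\in\{\pm 1\}$ works for every $n\ne\nu$, which means the homography $\tilde\gamma$ coincides with $z\mapsto\varepsilon z$ on at least three points of $\PP^1(\C)$, forcing $\tilde\gamma(z)=\varepsilon z$ identically.

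Consequently $\gamma=\begin{pmatrix}\varepsilon\tau&0\\0&\tau\end{pmatrix}$ for some $\tau\in\Q^\ast$. Plugging back into $Q_{d,\nu}^-\circ\gamma=Q_{d,\nu}^-$ yields $\tau^{2d}Q_{d,\nu}^-=Q_{d,\nu}^-$, hence $\tau=\pm 1$, completing the identification of ${\Aut}(Q_{d,\nu}^-,\Q)$ as the Klein group of order $4$. No serious obstacle is expected: the only structurally new point compared with Proposition \ref{4.2} is the replacement of the imaginary quadratic argument by a real quadratic one, and this is handled verbatim once $\mu_1\ge 2$ guarantees that no $\sqrt{\mu_n}$ becomes rational.
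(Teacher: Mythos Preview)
Your proof is correct and follows exactly the route the paper takes: the authors simply say that the proof of Proposition~\ref{4.2} goes through verbatim once one observes that $\mu_1\ge 2$ makes every $\sqrt{\mu_n}$ irrational, and you have spelled out precisely those details. One small point: your sentence ``coincides with $z\mapsto\varepsilon z$ on at least three points'' is not quite justified when $d=2$, since you have only established $\tilde\gamma(\sqrt{\mu_n})=\varepsilon\sqrt{\mu_n}$ for the two admissible indices $n$; to close this, either note that bijectivity of $\tilde\gamma$ on $\calZ(Q_{d,\nu}^-)$ forces $\tilde\gamma(-\sqrt{\mu_n})=-\varepsilon\sqrt{\mu_n}$ as well (giving four points), or observe directly that two indices with the same sign already yield $a_2=a_3\mu_m=a_3\mu_n$, hence $a_2=a_3=0$.
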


\subsection{Estimating the number  of images by   $\calQ^-$ of $(x,y)$ with $\max \{ \vert x \vert, \vert y \vert\}~\ge A$} 
From Corollary \ref{11/21/4}, the equality  \eqref{W=1W=1} and Propositions \ref{5.1} and \ref{5.2}, we deduce: 

\begin{proposition}\label{5.3}
For every $A>2\rme\lambda$,  for every $d\geq 2$ and for every $\varepsilon >0$, one has the equality
$$
\calR_{\geq 2d} (\calQ^-, B,A) =\frac 14 \left(\,  \sum_{F\in \calQ_{2d}^-} A_F\right) \cdot B^{1/d} + O_{\lambda,A,d, \varepsilon}\left(B^{\vartheta_{2d} +\varepsilon}\right) +O_{\lambda,A,d}\left( B^{1/(d+1)}\right).
$$
\end{proposition}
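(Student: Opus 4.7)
The plan is essentially to copy the strategy used for the $\calQ^+$ case (Proposition \ref{4.3}) and simply to feed the already-established regularity and automorphism results into Corollary \ref{11/21/4}.

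First I would verify that the hypotheses of Corollary \ref{11/21/4} are all in place for the family $\calQ^-$ and the degree $2d$. Proposition \ref{5.1} gives that $\calQ^-$ is $(A, 1, 2, 2, 2\rme\lambda)$--regular for $A > 2\rme\lambda$, which in particular satisfies the conditions \eqref{condAA1d0kappa} of Definition \ref{agreable} and makes $\calQ^-$ admissible in the hypothesis of Theorem \ref{Thm-on-R}. Moreover, since $2d \geq 4 > d_1 = 2$, the degree assumption $d \geq \max\{3, d_1\}$ of Theorem \ref{Thm-on-R} is fulfilled. Proposition \ref{5.2} verifies that each $F \in \calQ^-_{2d}$ has $\Aut(F, \Q)$ equal to the Klein group of order $4$, placing us in Condition C3 of Corollary \ref{11/21/4}, which by \eqref{W=1W=1} gives $W_F = 1/4$ for every $F \in \calQ^-_{2d}$ and thus the coefficient $C_{2d} = 1/4$ in the main term of \eqref{restricted}.

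Next I would identify $(2d)^\dag$ for the family $\calQ^-$. Since $\calQ^-_n$ is empty for every odd $n$ and nonempty for every even $n \geq 4$ (containing exactly $(n/2) + 1$ forms when $n = 2e$ with $e \geq 2$), we have
$$
(2d)^\dag = \inf\{n' : n' > 2d,\ \calQ^-_{n'} \neq \emptyset\} = 2d + 2,
$$
so the error term $B^{2/(2d)^\dag}$ appearing in \eqref{restricted} becomes $B^{1/(d+1)}$. Substituting $d \mapsto 2d$ in $\vartheta_d$ yields the error $B^{\vartheta_{2d} + \varepsilon}$, exactly as stated. Plugging everything into Corollary \ref{11/21/4} then gives the announced equality
$$
\calR_{\geq 2d}(\calQ^-, B, A) = \frac{1}{4}\!\left(\sum_{F \in \calQ^-_{2d}} A_F\right) B^{1/d} + O_{\lambda, A, d, \varepsilon}\bigl(B^{\vartheta_{2d} + \varepsilon}\bigr) + O_{\lambda, A, d}\bigl(B^{1/(d+1)}\bigr).
$$
The dependence of the implicit constants on $\lambda$ comes from the fact that the regularity constants $(A, 1, 2, 2, 2\rme\lambda)$ of $\calQ^-$ depend on $\lambda$ via the upper bound $\mu_n \leq \lambda n$, which in turn controls all the constants appearing in the proof of Proposition \ref{5.1}.

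There is essentially no obstacle to this step: all the hard work has already been done in Propositions \ref{5.1} and \ref{5.2}, and Corollary \ref{11/21/4} is exactly packaged to deliver the conclusion. The only bookkeeping to get right is (a) checking that the hypothesis $A > \kappa = 2\rme\lambda$ of Definition \ref{agreable} is preserved under the statement's assumption $A > 2\rme\lambda$, and (b) correctly reading off $(2d)^\dag = 2d+2$ so that the secondary error exponent comes out as $1/(d+1)$ rather than $2/(2d+1)$.
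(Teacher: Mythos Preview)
Your proposal is correct and follows exactly the paper's approach: the paper simply states that Proposition~\ref{5.3} follows from Corollary~\ref{11/21/4}, the equality~\eqref{W=1W=1}, and Propositions~\ref{5.1} and~\ref{5.2}. Your write-up is merely a more detailed unpacking of this one-line deduction, including the identification $(2d)^\dag=2d+2$ and the check of the degree hypothesis in Theorem~\ref{Thm-on-R}.
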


\subsection{Estimating the number  of images by  $\calQ^-$ of $(x,y)$ with $\max \{ \vert x \vert, \vert y \vert\}~< A$} 
The difference  
$$
\calR_{\geq 2d} (\calQ^-, B,0) -\calR_{\geq 2d} (\calQ^-, B,A)
$$
is at most the cardinality of the set 
$$
\left\{m\; :\; 0\ne  |m|\le B,\,
m= Q_{d',\nu}^-(x,y),\,  d'\ge d,\,  1\le \nu\le d'+1, \, \max\{|x|,|y|\}\le A
\right\}.
$$
Given $d'$, the number of such $m$ in this set  is bounded by $(d'+1)(2A+1)^2$. Hence we only need to bound from above the value of $d'$ when $|m|\ge 2$.

We first consider the integers of the form $Q_{d',\nu}^-(x,0)$. Since $Q_{d',\nu}^-(\pm 1,0)=1$, we may assume $|x|\ge 2$. From 
$$
 Q_{d',\nu}^-(x,0)=x^{2d'}\le B
 $$
 we deduce that $d'$ is bounded by $O(\log B)$. 
 
 Next let $m=Q_{d',\nu}^-(x,y)$ with $d'\ge d$, $1\le \nu\le d'+1$, $\max\{|x|,|y|\}\le A$, $|y|\ge 1$ and $0<|m|\le B$. 
Without loss of generality we may assume $d'>2A^2$.  We split the product
 $$
 \prod_{\atop{1\leq n\leq d'+2}{ n \ne \nu}} \left |x^2 -{\mu}_n y^2\right| 
 $$
  the value of which is $|m|$, as $P_1P_2$ where 
 $$
 P_1=\prod_{\atop{1\leq n\leq 2A^2}{ n \ne \nu}} \left |x^2 -{\mu}_n y^2\right|,
 \qquad
 P_2=\prod_{\atop{2A^2<  n\leq d'+2}{ n \ne \nu}} \left |x^2 -{\mu}_n y^2\right|.
 $$
 The product $P_1$ is $\ge 1$. For $2A^2<  n\leq d'+2$, since $\mu_n>n$,  $|x|\le A$ and $|y|\ge 1$, we have $ \mu_n y^2-x^2 \ge A^2$, hence 
 $$
 (A^2)^{d'-2A^2}\le P_2\le P_1P_2=|m|\le B,
 $$
 which yields 
 $$
 d'\le 2A^2+\frac{\log B}{2\log A}=O_{A}(\log B).
 $$
 Hence 
 $$
\calR_{\geq 2d} (\calQ^-, B,0) -\calR_{\geq 2d} (\calQ^-, B,A)
=O_{A}((\log B)^2).
$$
Thanks to Proposition \ref{5.3}, this completes the proof of the estimate for $\calR_{\geq 2d} (\calQ^-, B, 0 )$ in Theorem
\ref{CaseofQ-}.  
\subsection{Some results on $A_F$ for $F\in \calQ^-$} By the definition \eqref{defAF}, the fundamental domain attached to $Q_{d, \nu}^-$ is
\begin{equation}\label{924Bis}
\calD (Q_{d, \nu}^-):=\bigl\{ (x,y) \in \R^2 : \prod_{\atop{1\leq n\leq d+1}{ n \ne \nu}} \left |x^2 -{\mu}_n y^2\right| \leq 1 \bigr\}.
\end{equation}
Our purpose is to estimate the sum
$$
{\Coef}(\calQ^-, 2d):= \sum_{F\in \calQ_{2d}^-} A_F
 $$
as $d\to\infty$ by proving \eqref{averageAFQ-}. 

Repeating the proof of  Lemma \ref{Fubini}, we obtain:
 
\begin{lemma}\label{FubiniBis}
For any $d \geq 2$ and $1\le \nu\le d+1$,  one has the equality
  $$
  A_{Q_{d,\nu}^-}  = \int_{-\infty}^\infty \frac{ |u^2-\mu_\nu|^{1/d}}{\prod_{1\leq n\leq d+1}  |u^2-\mu_n|^{1/d}} {\rmd}  u.
  $$ 
  Hence 
  $$
  {\Coef}(\calQ^-, 2d) = \int_{-\infty}^\infty \frac{
  \sum_{1\le n\le d+1}  |u^2-\mu_n|^{1/d} }{\prod_{n=1}^{d+1}  |u^2-\mu_n|^{1/d}} {\rmd}  u.
  $$
 \end{lemma}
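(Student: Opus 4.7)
The plan is to adapt the proof of Lemma \ref{Fubini} essentially verbatim, the only change being that $(x^2+\mu_n y^2)$ is replaced by $|x^2-\mu_n y^2|$. Starting from the definition \eqref{924Bis} of $\calD(Q_{d,\nu}^-)$, I would perform the change of variables $x = uy$, $y=y$, which is a bijection from $\R\times(\R\setminus\{0\})$ onto $\R^2\setminus\{y=0\}$ with Jacobian $|y|$ (the $x$-axis having measure zero, it is harmless to discard it). The factor $|x^2 - \mu_n y^2|$ becomes $y^2\, |u^2 - \mu_n|$, so the defining inequality of $\calD(Q_{d,\nu}^-)$ is transformed into
$$
y^{2d} \prod_{\atop{1\le n\le d+1}{n\ne\nu}} |u^2 - \mu_n| \le 1,
\qquad \text{i.e.} \qquad y^2 \le M(u) := \prod_{\atop{1\le n\le d+1}{n\ne\nu}}|u^2-\mu_n|^{-1/d}.
$$

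Next, I would carry out the inner integration: for fixed $u$, the set of admissible $y$ is the symmetric interval $|y|\le M(u)^{1/2}$, and $\int_{|y|\le M(u)^{1/2}} |y|\,\rmd y = M(u)$. This yields
$$
A_{Q_{d,\nu}^-} = \int_{-\infty}^\infty \frac{\rmd u}{\prod_{n\ne\nu}|u^2-\mu_n|^{1/d}}.
$$
Multiplying numerator and denominator by $|u^2-\mu_\nu|^{1/d}$ produces the first formula of the lemma. The second formula follows by summing over $\nu\in\{1,\ldots,d+1\}$ and interchanging the finite sum with the integral (Fubini/linearity).

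The only subtlety is to check that the manipulations are legitimate despite the singularities of the integrand at the points $u=\pm\sqrt{\mu_n}$ for $n\ne\nu$. Near such a point the integrand is of order $|u\mp\sqrt{\mu_n}|^{-1/d}$, and since $1/d<1$ for $d\ge 2$, the singularity is integrable; at infinity, $\prod_{n\ne\nu}|u^2-\mu_n|^{1/d}\sim u^2$, so $M(u) = O(u^{-2})$, which is integrable at $\pm\infty$. This ensures both the validity of the change of variables (by Fubini--Tonelli applied to the positive integrand $|y|$) and the absolute convergence of the final integral, and it confirms independently the known finiteness $A_{Q_{d,\nu}^-}<\infty$ recalled in \S\ref{para1.1}. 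I do not expect any real obstacle: the computation is a direct transcription of the one given for Lemma \ref{Fubini}, the sign change being absorbed by the absolute values.
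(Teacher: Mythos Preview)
Your proof is correct and follows exactly the route the paper indicates: the paper simply says ``Repeating the proof of Lemma~\ref{Fubini}, we obtain'' Lemma~\ref{FubiniBis}, and your change of variables $x=uy$ (the paper's $x=uv$, $y=v$) together with the inner integration in $y$ is precisely that repetition. Your added remarks on the integrability of the singularities at $u=\pm\sqrt{\mu_n}$ and the decay at infinity are a welcome justification that the paper leaves implicit.
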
 
 
 Since $ |u^2-\mu_n|\le  u^2+\mu_n$, the lower bound of ${\Coef}(\calQ^-, 2d)$ is a consequence of the lower bound of ${\Coef}(\calQ^+, 2d)$. More precisely, we have, by Lemma \ref{FubiniBis},
$$
A_{Q_{d,\nu}^-}  = \int_{-\infty}^\infty \frac{{\rmd}  u}{\prod_{\atop{1\leq n\leq d+1}{ n\ne \nu} }|u^2-\mu_n|^{1/d}}
\ge 
 \int_{-\infty}^\infty \frac{{\rmd}  u}{\prod_{\atop{1\leq n\leq d+1}{ n\ne \nu}} (u^2+\mu_n)^{1/d}},
$$
hence
\begin{align*}
{\Coef}(\calQ^-, 2d) & \geq  (d+1)   \int_{-\infty}^\infty  \frac {{\rmd} u}{\prod_{2\leq n \leq d+1} (u^2+\mu_{n})^{1/d}}\\
&\geq (d+1) \int_{-\infty} ^{\infty} \frac{{\rmd} u}{u^2+\mu_{d+1}}
= \pi\cdot \frac{d+1}{\sqrt{\mu_{d+1}}}\cdotp
\end{align*}
This  proves the  lower bound
\begin{equation}\label{987Bis}
{\Coef}(\calQ^-, 2d) > \frac\pi {\sqrt \lambda}   \, \sqrt d. 
\end{equation}

For the upper bound,  we use once more Lemma \ref{FubiniBis}. By the change of variable $u^2=v$ we have
$$
A_{Q_{d,\nu}^-}  = 
2\int_0^\infty \frac{{\rmd}  u}{\prod_{\atop{1\leq n\leq d+1}{ n\ne \nu} }|u^2-\mu_n|^{1/d}}
=
\int_0^\infty \frac{{\rmd}  v}{\sqrt v\prod_{\atop{1\leq n\leq d+1}{ n\ne \nu} }|v-\mu_n|^{1/d}}\cdotp
$$
We split the integral as the sum of $d+3$ terms
$$
A_{Q_{d,\nu}^-}  = \sum_{j=0}^{d+2} A_j
$$
with
$$
A_0=\int_0^{\mu_1}
 \frac{{\rmd}  v}{\sqrt v\prod_{\atop{1\leq n\leq d+1}{ n\ne \nu}} (\mu_n-v)^{1/d}},
$$
$$
A_j=
\int_{\mu_j}^{\mu_{j+1}}
 \frac{{\rmd}  v}{\sqrt v\prod_{\atop{1\leq n\leq j}{ n\ne \nu} }(v-\mu_n)^{1/d}\prod_{\atop{j+1\leq n\leq d+1}{ n\ne \nu} }(\mu_n-v)^{1/d}}
\qquad (1\le j\le d+1)
$$
and
$$
A_{d+2}=\int_{\mu_{d+2}}^\infty  \frac{{\rmd}  v}{\sqrt v\prod_{\atop{1\leq n\leq d+1}{ n\ne \nu} }(v-\mu_n)^{1/d}}\cdotp
$$
$\bullet$ Upper bound for $A_0$.
\\
For $\nu=1$, we use the lower bound
$$
\prod_{ 2\leq n\leq d+1 } (\mu_n-\mu_1) \ge d!\ge \frac {d^d}{e^d}
$$
which follows from Stirling's estimate \eqref{Equation:Stirling} and 
one deduces
$$
A_0\le
\frac1 {
\prod_{ 2\leq n\leq d+1 } (\mu_n-\mu_1)^{1/d}}
 \int_0^{\mu_1}
 \frac{{\rmd}  v}{\sqrt v} \le \frac{2\rme\sqrt{\mu_1}}d \le \frac{2\rme\sqrt{\lambda}}d\cdotp
$$
Similarly, for $2\le \nu\le d+1$ we have
$$
A_0\le
\frac1 {
\prod_{\atop{2\leq n\leq d+1}{ n\ne \nu}} (\mu_n-\mu_1)^{1/d}}
 \int_0^{\mu_1}
 \frac{{\rmd}  v}{\sqrt v (\mu_1-v)^{1/d}} 
$$
and
$$
\prod_{\atop{2\leq n\leq d+1}{ n\ne \nu}} (\mu_n-\mu_1) \ge  (d-1)! = \frac{ d! }{d},
$$
hence
$$
\prod_{\atop{2\leq n\leq d+1}{ n\ne \nu}} (\mu_n-\mu_1)^{1/d}\ge   \frac d {\rme\sqrt 2}.
$$
From the upper bounds (recall $\lambda\ge 2$ and $2\le \mu_1\le \lambda$)
$$
\begin{aligned}
 \int_0^{\mu_1}
 \frac{{\rmd}  v}{\sqrt v (\mu_1-v)^{1/d}}
 &\le \int_0^{\mu_1}  \frac{{\rmd}  v}{\sqrt v }
 +
  \int_0^{\mu_1}  \frac{{\rmd}  v}{ (\mu_1-v)^{1/d}}
  \\
  &=2\sqrt {\mu_1}+\frac d {d-1} \mu_1^{1-( 1 /d)} < (2+\sqrt 2)
  \lambda,
  \end{aligned}
$$ 
we deduce  
$$
A_0< \frac {5 \rme   \lambda}{d}\cdotp
$$ 
 \\
$\bullet$ Upper bound for $A_j$, $1\le j\le d+1$. 
\\
$\diamond$ If $\nu\not\in\{j,j+1\}$, we have
$$
\begin{aligned}
A_j&\le  
\frac 1
{\sqrt \mu_j\prod_{\atop{1\leq n\leq j-1}{ n\ne \nu} }(\mu_j-\mu_n)^{1/d}\prod_{\atop{j+2\leq n\leq d+1}{ n\ne \nu} }(\mu_n-\mu_{j+1})^{1/d}
}
\\ 
&\qquad \qquad \qquad \qquad \qquad \qquad \qquad \qquad \qquad 
\int_{\mu_j}^{\mu_{j+1}}
 \frac{{\rmd}  v}{  (v-\mu_j)^{1/d} (\mu_{j+1}-v)^{1/d}}\cdotp
 \end{aligned}
$$
We use \eqref{Equation:consequenceLemme:minoration}:  for $1\le j\le d$ we have 
$$
\begin{aligned}
\prod_{\atop{1\leq n\leq j-1}{ n\ne \nu} }(\mu_j-\mu_n)
\prod_{\atop{j+2\leq n\leq d+1}{ n\ne \nu} } (\mu_n-\mu_{j+1})
&\ge
\begin{cases}
\frac{(j-1)!(d-j)!}{j-\nu} &\hbox{for $1\le \nu\le j-1$}
\\
\frac{(j-1)!(d-j)!}{\nu-j-1} &\hbox{for $j+1\le \nu\le d+1$}
\end{cases} 
\\
&\ge \frac {j!(d-j)!}{d^2}\ge \frac 1 {d^2} \left( \frac d {2\rme^{1+\rme^{-1}}}\right)^d,
 \end{aligned}
 $$
 while for $j=d+1$ this lower bound becomes
 $$
 \prod_{\atop{1\leq n\leq d}{ n\ne \nu} }(\mu_{d+1}-\mu_n)\ge \frac{d!}{d+1-\nu}
 \ge \frac 1 {d^2} \left( \frac d {2\rme^{1+\rme^{-1}}}\right)^d.
 $$
Next we use the following estimate:
$$ 
\begin{aligned}
&\int_{\mu_j}^{\mu_{j+1}}
 \frac{{\rmd}  v}{  (v-\mu_j)^{1/d} (\mu_{j+1}-v)^{1/d}}
 \le
 \\ 
 &  \qquad  
 \frac {2^{1/d}} {  (\mu_{j+1}-\mu_j)^{1/d}}
 \left(
 \int_{\mu_j}^{(\mu_j+\mu_{j+1})/2}
 \frac{{\rmd}  v}{  (v-\mu_j)^{1/d}}
 + 
 \int_{(\mu_j+\mu_{j+1})/2}^{\mu_{j+1}} \frac{{\rmd}  v}{  (\mu_{j+1}-v)^{1/d}}\right).
 \end{aligned}
 $$ 
 We have
 $$ 
 \int_{\mu_j}^{(\mu_j+\mu_{j+1})/2}
 \frac{{\rmd}  v}{  (v-\mu_j)^{1/d}}
 =
  \int_{(\mu_j+\mu_{j+1})/2}^{\mu_{j+1}} \frac{{\rmd}  v}{  (\mu_{j+1}-v)^{1/d}}
=\frac d {d-1} \left(\frac{\mu_{j+1}-\mu_j} 2\right)^{1-(1/d)}.
 $$ 
 Hence 
 $$
 \int_{\mu_j}^{\mu_{j+1}}
 \frac{{\rmd}  v}{  (v-\mu_j)^{1/d} (\mu_{j+1}-v)^{1/d}}\le
 \frac {d} {d-1} 2^{2/d } (  \mu_{j+1}-\mu_j)^{1-(2/d) }.
 $$
 We deduce that for $\nu\not\in\{j,j+1\}$, we have 
 $$
 A_j\le (4d^2)^{1/d} 2 \rme^{1+\rme^{-1}}  \frac {(\mu_{j+1}-\mu_j)^{1-(2/d)}}{(d-1) \sqrt{\mu_j}} \cdotp
 $$
 $\diamond$ If $\nu=j$, we have 
 $$ 
A_j\le  
\frac 1
{\sqrt \mu_j\prod_{1\le n\le j-1} (\mu_j-\mu_n)^{1/d}\prod_{j+2\le n\le d+1}(\mu_n-\mu_{j+1})^{1/d}
} 
\int_{\mu_j}^{\mu_{j+1}}
 \frac{{\rmd}  v}{  (\mu_{j+1}-v)^{1/d}}
$$
and we use the formula
 $$
 \int_{\mu_j}^{\mu_{j+1}}  \frac{{\rmd}  v}{  (\mu_{j+1}-v)^{1/d}}
 =
 \frac d {d-1} (\mu_{j+1}-\mu_j)^{1-(1/d)}.
 $$
$\diamond$  If $\nu=j+1$, we have
$$
A_j\le  
\frac 1
{\sqrt \mu_j\prod_{1\le n\le j-1} (\mu_j-\mu_n)^{1/d}\prod_{j+2\le n\le d+1}(\mu_n-\mu_{j+1})^{1/d}
}
 \int_{\mu_j}^{\mu_{j+1}}  \frac{{\rmd}  v}{  (v-\mu_j)^{1/d}}
$$
and we use the formula
 $$
 \int_{\mu_j}^{\mu_{j+1}}  \frac{{\rmd}  v}{  (v-\mu_j)^{1/d}}
 =
 \frac d {d-1} (\mu_{j+1}-\mu_j)^{1-(1/d)}.
$$ 

\par
We deduce that for $1\le j\le d+1$ and $1\le \nu\le d+1$, we have
 \begin{equation}\label{Ajleq}
A_j\le \bigl(2 \rme^{1+\rme^{-1}}+o(1)\bigr) \frac { \mu_{j+1}-\mu_j} {d\sqrt{\mu_j}}
\cdotp
\end{equation} 
For $j\ge 1$, we have
$$
\mu_j \geq j+1 \geq \frac 1\lambda \, \mu_{j+1}, 
$$
and we deduce the inequality 
$$
\sum_{j=1}^{d+1}  \frac { \mu_{j+1}-\mu_j} {\sqrt{\mu_j}}
\le \sqrt {\lambda} \, 
\sum_{j=1}^{d+1}  \frac { \mu_{j+1}-\mu_j} {\sqrt{\mu_{j+1}}}\cdotp  
$$
Using  the inequality
$$
\sum_{j=1}^{d+1} \frac { \mu_{j+1}-\mu_j}  {\sqrt {\mu_{j+1}}} 
\le 
\sum_{j=1}^{d+1}\int_{\mu_j}^{\mu_{j+1} }\frac{{\rmd}  t}{ \sqrt t}=
\int_{\mu_1}^{\mu_{d+2}} \frac{{\rmd}  t}{ \sqrt t}
\le 
 2\sqrt {\mu_{d+2}}\le2 \sqrt {\lambda(d+2)},
$$
we deduce from \eqref{Ajleq}, that 
\begin{align*}
\sum_{j=1}^{d+1} A_j \le &   \left( \bigl(2 \rme^{1+\rme^{-1}} +o_\lambda(1)\bigr)/d\right) \cdot \sqrt \lambda \cdot  \bigl( 2\sqrt{\lambda (d+2)}\bigr)\\
& \leq   \bigl(4 \rme^{1+\rme^{-1}} +o_\lambda(1)\bigr) \frac {\lambda}{\sqrt d}\cdotp
\end{align*}

$\bullet$ Upper bound for $A_{d+2}$. 
\\
For $v\ge \mu_{d+2}$ and $1\le n\le d+1$, we have
$$
v-\mu_n\ge v\left(1-\frac{\mu_n}{\mu_{d+2}}\right),
$$
hence
$$
A_{d+2}\le
\frac 1
{\prod_{\atop{1\leq n\leq d+1}{ n\ne \nu} }\left(1-\frac {\mu_n}{\mu_{d+2}}\right)^{1/d}}
\int_{\mu_{d+2}}^\infty  \frac{{\rmd}  v}{v^{3/2}}
$$
with 
$$
\int_{\mu_{d+2}}^\infty  \frac{{\rmd}  v}{v^{3/2}}=\frac 2 {\sqrt{\mu_{d+2}}}\le\frac 2 {\sqrt{ d+2}}
$$
and (using Stirling's estimate  \eqref{Equation:Stirling} once more)
$$
\prod_{\atop{1\leq n\leq d+1}{ n\ne \nu} }\left(1-\frac {\mu_n}{\mu_{d+2}}\right)^{1/d}
\ge 
\frac {d!^{1/d}} {\mu_{d+2}}
 \ge 
\frac {d!^{1/d}} {\lambda(d+2)}  
\ge 
\frac 1{\lambda \rme } \left(1+\frac 2 d\right)^{-1}.
$$
We deduce  
$$
A_{d+2}\le \left(   2 \rme    +o(1)\right) \frac \lambda  {\sqrt d}\cdotp
$$

Putting these estimates together, we obtain  
$$
A_{Q_{d,\nu}^-} = \sum_{j=0}^{d+2} A_j  \le \left( 4 \rme^{1+\rme^{-1}}  +  2 \rme  +o_\lambda(1)\right) \frac \lambda  {\sqrt d}\cdotp
$$
Summing over all the $Q_{d, \nu}^-\in \calQ_{2d}^-$ we conclude
 $$
{\Coef}(\calQ^-, 2d) \le \left( 4 \rme^{1+\rme^{-1}} + 2 \rme+o_\lambda(1)\right) \lambda  \sqrt d.
$$
Combining with \eqref{987Bis} and  with the upper bound  $4 \rme^{1+\rme^{-1}} + 2 \rme<22$, we complete the proof of \eqref{averageAFQ-}. The proof of Theorem \ref{CaseofQ-} is now complete.

\section {Proof of Theorem \ref{CaseofL}}  We now use the notations of \S\, \ref{1.3.3}.  Our first purpose is to check that the family   $\calL$ satisfies the assertions of Definition \ref{agreable} of a regular family. 
 The items {\rm (i)}, {\rm (ii)} are obvious. The item {\rm (iii)} is trivially satisfied with $A_1=1$. The items {\rm (iv)} and {\rm (v)} 
 are more subtle. 
 
 \subsection{Isomorphisms between two elements in $\calL$} \label{6.1} We will prove the following more general statement which implies that the item {\rm (iv)} is fulfilled by  the family $\calL$.
 
\begin{proposition}\label{1026} 
 Let $d\geq 4$ be an integer,  $\{a_i: 1\leq i \leq d-1\}$   and $\{b_j : 1\leq j \leq d-2\}$  two sets of integers and $p$  a prime number such that
 \begin{equation}\label{firstinequality}
 0<a_1<\cdots <a_{d-1} <p,  
 \end{equation}
 and 
 \begin{equation}\label{secondinequality} 
 0 <b_1<\cdots <b_{d-2} <p.
 \end{equation}
 Then the binary forms
 \begin{equation}\label{defFG}
 X\prod_{i=1}^{d-1}\left( X-a_iY\right) \text{ and } (X-pY)X \prod_{j=1}^{d-2} \left( X- b_jY\right) 
 \end{equation}
 are  not isomorphic.
 \end{proposition}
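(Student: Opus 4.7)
The plan is to exploit the projective invariance of the cross-ratio, combined with reduction modulo the prime $p$. If the two forms in \eqref{defFG} were isomorphic via some $\gamma\in\Gl(2,\Q)$, the associated homography $\tilde\gamma$ would induce a bijection between their root sets in $\PP^1(\Q)$. Since each form has nonzero coefficient in front of $X^d$, neither vanishes at $[1:0]$, so both root sets lie in the affine line: $S_G=\{0,p,b_1,\dots,b_{d-2}\}$ and $S_F=\{0,a_1,\dots,a_{d-1}\}$, with $\tilde\gamma(S_G)=S_F$.

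Since $d\geq 4$, I would single out the four distinct points $0,p,b_1,b_2$ of $S_G$ and compute
$$
\lambda:=[0,p;b_1,b_2]=\frac{b_1(p-b_2)}{b_2(p-b_1)}.
$$
The point of this particular choice is that, because $0<b_1,b_2<p$, both numerator and denominator are congruent to $-b_1b_2\pmod p$, and primality of $p$ ensures $b_1b_2\not\equiv 0\pmod p$. Thus $\lambda$ is a $p$-adic unit with $\lambda\equiv 1\pmod p$.

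On the other side, let $z_1,z_2,z_3,z_4$ be the images under $\tilde\gamma$ of $0,p,b_1,b_2$ in that order: these are four distinct elements of $S_F\subset\{0,1,\dots,p-1\}$, and invariance of the cross-ratio gives $[z_1,z_2;z_3,z_4]=\lambda$. The elementary identity
$$
(z_1-z_3)(z_2-z_4)-(z_1-z_4)(z_2-z_3)=(z_1-z_2)(z_3-z_4)
$$
shows that $[z_1,z_2;z_3,z_4]\equiv 1\pmod p$ is equivalent to $(z_1-z_2)(z_3-z_4)\equiv 0\pmod p$. But each $z_i-z_j$ with $i\ne j$ is a nonzero integer of absolute value less than $p$, hence a unit modulo $p$. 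This contradicts $\lambda\equiv 1\pmod p$, so no such $\gamma$ can exist.

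The proof is essentially a single cross-ratio computation, so there is no hard analytic obstacle; the delicate step is recognizing that the $p$-adic valuation reveals a numerical discrepancy that is invisible over $\Q$ or $\R$. The three ingredients pulling their weight are the lower bound $d\geq 4$ (to guarantee that the quadruple $\{0,p,b_1,b_2\}$ lives inside $S_G$), the primality of $p$ (to make $\Z/p\Z$ a field), and the inequalities \eqref{firstinequality}--\eqref{secondinequality} (to keep all relevant differences invertible modulo $p$).
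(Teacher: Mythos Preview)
Your proof is correct and follows essentially the same strategy as the paper's: invariance of the cross-ratio under the homography $\tilde\gamma$, combined with a $p$-adic obstruction distinguishing the two root sets. The only cosmetic difference is which invariant is read off: the paper chooses the cross-ratio $\alpha=[0,b_1,p,b_2]$ (in its convention) and observes $v_p(\alpha)=1$, impossible for any quadruple in $S_F$; you choose $\lambda=[0,p;b_1,b_2]$ and observe $\lambda\equiv 1\pmod p$, likewise impossible for any quadruple in $S_F$. Since in fact $\lambda=1-\alpha$, the two observations are equivalent reformulations of one another.
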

 
 \begin{proof} The proof is based on classical properties of the cross--ratio of four  points on $\PP^1 (\C)= \C \cup \{ \infty \}$. Recall that if $(x_1,x_2,x_3,x_4)$ is a  quadruple of four distinct complex   numbers, the associated   {\em  cross--ratio } is the complex number $[x_1,x_2,x_3,x_4]$ defined by
  $$
 [x_1,x_2,x_3,x_4]:=  \frac{x_3-x_1}{x_3-x_2} \, \Bigg/ \, \frac{x_4-x_1}{x_4-x_2}\cdotp
  $$
  This definition is naturally extended to $\PP^1 (\C)$  when exactly one of the elements $x_1$, $x_2$, $x_3$ and $x_4$ is equal to $\infty$.  The cross--ratio
  is invariant by any homography  of $\PP^1 (\C)$. In other words, for any homography  $\mathfrak h$, for any quadruple $(x_1,x_2,x_3,x_4)$ of distinct points of
  $\PP^1 (\C)$,  one has the equality
\begin{equation}\label{1045}
  [x_1,x_2,x_3,x_4] = [\mathfrak h (x_1), \mathfrak h (x_2), \mathfrak h (x_3), \mathfrak h (x_4)].
\end{equation}
  Let  $a$ be a nonzero integer. The canonical decomposition of $|a|$ into prime factors
  $$
  |a|=\prod_p p^{v_p(a)}
  $$
defines, for each prime number $p$, the   {\em  $p$--adic valuation  } $v_p(a)\in\Z$ of $a$. 
Let  $t=a/b\ne 0$ be a rational number, written in its irreducible form. The   {\em  $p$--adic valuation  } of $t$, is the non negative integer 
$$
v_p (t): =
\begin{cases}
v_p (a) & \text{ if } p\nmid b,\\
-v_p (b) &\text{ if  } p\nmid a.
\end{cases}
$$
We now begin the proof of  Proposition \ref{1026}. This proof is  by contradiction. Let  $F_1(X,Y)$ and $F_2(X,Y)$ respectively be the two binary
forms introduced in \eqref{defFG}. 
Suppose that there is  $\gamma \in {\Gl}(2, \Q)$,  written as in \eqref{defgamma}, such that 
$$
F_1 
=
F_2 \circ \gamma.
$$
Then the homography 
  $\mathfrak h$ associated with $\gamma$ has the shape 
  $$
  z\mapsto \mathfrak h (z) =\frac {az+b}{cz+d}\cdotp
  $$
This homography induces a bijective map between  the sets of zeroes of the polynomials   $
f_1(X):=F_1  (X,1)
$
and 
$
f_2 (X):=F_2(X,1).
$
These sets of zeroes are  $\calZ (f_1):= \{ 0, \, a_1,\dots , a_{d-1}\}$ and  $\calZ (f_2)=\{ 0, \, b_1, \dots, b_{d-2}, \, p\}$ considered as subsets of $\PP^1 (\C)$. Consider, for $j=1,\, 2$,    the subsets of $\Q\setminus\{0\}$ defined by    
 \begin{equation}\label{defBir(f)}
\Bir (f_j):=
\left\{ [x_1,x_2,x_3,x_4]: x_i \in \calZ (f_j), x_i \text{ distinct}
\right\}. 
\end{equation}
The equality \eqref{1045} implies the equality of the two sets
$$\Bir( f_1) = \Bir (f_2),$$
and also of the two sets
$$
\{v_p (y): y \in \Bir (f_1)\} =\{ v_p (z) : z \in \Bir (f_2)\}.
$$
As a consequence of the inequalities  \eqref{firstinequality}, we have  $\{v_p (y): y \in \Bir (f_1)\}= \{ 0\}$. However we also have $1\in \{ v_p (z) : z \in \Bir (f_2)\}$ by considering the cross ratio
$ [ 0,b_1,p,b_2]$ and the inequalities \eqref{secondinequality}.
So we reach a contradiction: the element $\gamma$ does not exist and the binary forms  $F_1$ and $F_2$ are not isomorphic. \end{proof}

 \subsection{Triviality of the group  ${\Aut} ( L_{d, p},  \Q)$} \label{6.2} In order to determine the value  of the coefficient $W$ appearing in  Proposition \ref{Stewart},
 we prove the following.
 
\begin{proposition}\label{1139} Let  $d\geq 5$ be an integer. For every prime  $p\geq d$, the automorphism group of the binary form 
 $L_{d,p} $   is  $\{ {\Id}\}$ if  $d$ is odd, and  $\{ \pm {\Id} \}$ if $d$ is even.
 In particular, the set $\calL_d$ fulfils the conditions  {\em  C1}  or  {\em  C2}  of Corollary  \ref{11/21/4},  according to the parity of $d$.

 \end{proposition}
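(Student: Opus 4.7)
The plan is to extend the cross--ratio technique of \S\ref{6.1} by combining it with a real--variable monotonicity argument.

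I would first reduce the question to $\PGl(2,\Q)$. Any $\gamma \in \Aut(L_{d,p},\Q)$ induces the homography $\tilde\gamma \in \PGl(2,\Q)$ of \eqref{gammatilde}, and $\tilde\gamma$ permutes the zero set
$$
S := \{0, 1, 2, \dots, d-2, p\} \subset \PP^1(\Q)
$$
of $L_{d,p}$, noting that $\infty \notin S$ since $L_{d,p}(1,0) = 1$. The kernel of the map $\gamma \mapsto \tilde\gamma$ consists of the scalar matrices $\lambda\Id$ with $\lambda^d = 1$, namely $\{\Id\}$ for $d$ odd and $\{\pm\Id\}$ for $d$ even. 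It therefore suffices to show that the only homography $\mathfrak h \in \PGl(2,\Q)$ permuting $S$ is the identity.

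Next I would prove that any such $\mathfrak h$ stabilises $\{0, p\}$. For every $y \in S$, the integer
$$
N(y) := \#\bigl\{ (x_1,x_2,x_3) : x_i \in S \setminus \{y\} \text{ distinct}, \ v_p([x_1,x_2,x_3,y]) \neq 0 \bigr\}
$$
is $\mathfrak h$--equivariant, i.e.\ $N(\mathfrak h(y)) = N(y)$. Since $v_p(p - a) = 1$ exactly when $a = 0$, while $v_p(a - b) = 0$ for all $a, b \in \{0,\dots,d-2\}$, enumerating the six orderings of $\{0, p, x\}$ as in \S\ref{6.1} yields $N(0) = N(p) = 2(d-2)(d-3)$ and $N(y) = 4(d-3)$ for $y \in \{1,\dots,d-2\}$. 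The assumption $d \geq 5$ gives $2(d-2)(d-3) > 4(d-3)$, whence $\mathfrak h(\{0,p\}) = \{0,p\}$.

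I would then split into two cases. In \emph{Case A}, $\mathfrak h$ fixes both $0$ and $p$, so $\mathfrak h(z) = \lambda p z / ((\lambda - 1) z + p)$ for some $\lambda \in \Q^*$, with derivative $\lambda p^2 / ((\lambda - 1) z + p)^2$. When $\lambda > 0$ the pole $p/(1-\lambda)$ lies outside $[0,p]$, so $\mathfrak h$ is an increasing bijection of $[0,p]$ and restricts to the identity on $\{1,\dots,d-2\}$; having three fixed points, it equals the identity of $\PP^1$. When $\lambda < 0$ the pole $z_0 = p/(1 - \lambda)$ lies in $(0, p)$, and a sign inspection shows that $\mathfrak h$ sends $(0, z_0)$ into $(-\infty, 0)$ and $(z_0, p)$ into $(p, +\infty)$, contradicting $d \geq 5$. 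In \emph{Case B}, $\mathfrak h$ swaps $0$ and $p$, so $\mathfrak h(z) = (p - z)/(cz + 1)$ for some $c \in \Q$. The same pole analysis rules out the subcase $-1/c \in (0, p)$. Otherwise $\mathfrak h$ is a decreasing bijection of $[0, p]$ whose restriction to $\{1, \dots, d-2\}$ is the unique decreasing bijection $n \mapsto d - 1 - n$; comparing with $\mathfrak h(1) = d - 2$ and $\mathfrak h(2) = d - 3$ produces the factorisation $(d - 4)(p - d + 1) = 0$, whose solutions $d = 4$ and $p = d - 1$ are both excluded by the hypotheses $d \geq 5$ and $p \geq d$.

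The main obstacle is the $p$--adic cross--ratio enumeration of the second step, where one must organise the 3--tuples by the positions of $0$ and $p$ and track the $v_p$--contributions of all six orderings to confirm that $N$ really separates $\{0, p\}$ from the other roots. Once this set--stabilisation is in hand, the one--dimensional geometry of $\PGl(2, \Q)$, combined with the arithmetic constraint $p \geq d$ and a short monotonicity argument on the real line, eliminates all candidate homographies except the identity.
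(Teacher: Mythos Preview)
Your proof is correct and follows essentially the same architecture as the paper: reduce to $\PGl(2,\Q)$, use $p$--adic valuations of cross--ratios to show that any admissible homography stabilises $\{0,p\}$, then handle the two cases (fix or swap) by a real monotonicity argument on $[0,p]$. The implementation differs only in packaging --- you introduce the equivariant counting function $N(y)$ where the paper argues directly with a single cross--ratio $[x,0,y,p]$, and you parametrise the homographies explicitly in each case where the paper invokes the general Lemmas~\ref{tentative} and~\ref{tentative2}; in Case~B you derive the constraint $(d-4)(p-d+1)=0$ from $\mathfrak h(1)=d-2$ and $\mathfrak h(2)=d-3$, whereas the paper observes more simply that three fixed values force $\mathfrak h(z)=d-1-z$, contradicting $\mathfrak h(0)=p$.
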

 
  \subsubsection{Two preliminary results}
The proof of the following lemma  is based on the analytic properties of the homography on each of its intervals of definition.
 
\begin{lemma} \label{tentative} Let    $\mathfrak h$  be a homography belonging to ${\PGl}(2 ,\R)$,
$M>0$ be a real number, $t\geq 1$ be an integer, $x_1,\dots,x_t$ be $t$ real numbers    satisfying $0< x_1< \cdots <x_t <M$,
$y_1,\dots,y_t$ be $t$ real numbers satisfying   $0 <y_1< \cdots < y_t <M$.
Assume
 $$
 \begin{cases}
 \mathfrak h \left(\left\{ x_i: 1\leq i \leq t\right\} \right) = \left\{ y_j : 1 \leq j \leq t\right\},\\
 \mathfrak h (0) =0 \text{ and } \mathfrak h (M) =M.
 \end{cases}
 $$
 Then, for every  $1 \leq i\leq t$, one has the equality $\mathfrak h (x_i) =y_i$. 
 \end{lemma}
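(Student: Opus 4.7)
The plan is to reduce everything to showing that $\mathfrak h$ is strictly increasing on the interval $[0,M]$; once that is established, the conclusion is immediate, since the hypothesis $\mathfrak h(\{x_i\})=\{y_j\}$ together with $x_1<\cdots<x_t$, $y_1<\cdots<y_t$ forces $\mathfrak h(x_i)=y_i$ for each $i$ by comparing two strictly increasing sequences with the same image.

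To prove monotonicity, write $\mathfrak h(z)=(az+b)/(cz+d)$ with $ad-bc\ne 0$. The constraint $\mathfrak h(0)=0$ gives $b=0$, and $\mathfrak h(M)=M$ then forces $a=cM+d$. The derivative $\mathfrak h'(z)=(ad-bc)/(cz+d)^2$ has constant sign wherever it is defined, so $\mathfrak h$ is strictly monotone on each of its intervals of continuity. If $c=0$ the map is affine and we are done immediately. Otherwise the only point where $\mathfrak h$ fails to be continuous is the pole $z_0=-d/c$.

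The key step is to show $z_0\notin[0,M]$. The endpoints are ruled out immediately: $z_0=0$ would force $d=0$, contradicting $\mathfrak h(0)=0$ being finite, and $z_0=M$ would force $cM+d=0$, contradicting $\mathfrak h(M)=M$ being finite. The delicate case is $z_0\in(0,M)$: here $\mathfrak h$ is strictly monotone on $[0,z_0)$ with $\mathfrak h(0)=0$ and $\lim_{z\to z_0^-}\mathfrak h(z)=\pm\infty$, so its range on $[0,z_0)$ is either $[0,+\infty)$ or $(-\infty,0]$. In the first case, $M$ has a preimage in $(0,z_0)$, which together with the known preimage $M$ contradicts the injectivity of $\mathfrak h$ on $\PP^1(\R)$. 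In the second case, by the same monotonicity argument applied to the interval $(z_0,M]$ (where $\mathfrak h$ ends at $M$), one shows $M$ acquires a second preimage there, again a contradiction. I expect this case analysis at the pole to be the main obstacle — not difficult, but requiring care in tracking signs and limits.

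Once $z_0\notin[0,M]$, the map $\mathfrak h$ is continuous and strictly monotone on all of $[0,M]$, and the boundary values $\mathfrak h(0)=0<M=\mathfrak h(M)$ force it to be strictly increasing there. Applying this to $0<x_1<\cdots<x_t<M$ produces $0<\mathfrak h(x_1)<\cdots<\mathfrak h(x_t)<M$, and identifying this ordered set with the ordered set $\{y_1<\cdots<y_t\}$ yields $\mathfrak h(x_i)=y_i$ for every $1\le i\le t$.
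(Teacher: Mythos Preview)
Your overall strategy---reduce to showing $\mathfrak h$ is increasing on $[0,M]$---is sound and close to the paper's, but there is a genuine gap in your treatment of the pole. In your ``second case'' (range $(-\infty,0]$ on $[0,z_0)$, i.e.\ the decreasing case), the derivative has the same sign on $(z_0,M]$, so $\mathfrak h$ is again decreasing there, going from $+\infty$ at $z_0^+$ down to $M$ at $M$; the range on $(z_0,M]$ is therefore $[M,+\infty)$, and $M$ does \emph{not} acquire a second preimage. In fact one cannot rule out a pole in $(0,M)$ using only $\mathfrak h(0)=0$ and $\mathfrak h(M)=M$: the homography $\mathfrak h(z)=Mz/(2z-M)$ fixes $0$ and $M$, has its pole at $M/2$, and is decreasing on each branch. (More generally, any homography fixing $0$ and $M$ either maps the arc $(0,M)\subset\PP^1(\R)$ to itself or swaps it with the complementary arc; the swapping case genuinely occurs.)

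The fix is to use the hypothesis $t\ge 1$, which you have not yet invoked: if $z_0\in(0,M)$ and $\mathfrak h$ is decreasing on each piece, then $\mathfrak h((0,M)\setminus\{z_0\})=(-\infty,0)\cup(M,+\infty)$, so $\mathfrak h(x_1)\notin(0,M)$, contradicting $\mathfrak h(x_1)\in\{y_1,\dots,y_t\}\subset(0,M)$. This is exactly how the paper proceeds: in the decreasing case it shows directly that $\mathfrak h(x_t)>M$ or $\mathfrak h(x_t)<0$, depending on which side of the pole $x_t$ lies. Once this case is excluded, the rest of your argument goes through.
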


 \begin{proof} We split the proof in several cases depending on the nature of the homography   $\mathfrak h$.
 \vskip .3cm
 \begin{itemize}
 \item If  $\mathfrak h (\infty) = \infty$, the restriction of  $\mathfrak h$ to the real affine line has the shape
 $\mathfrak h (x) = ax+b$, where  $a\ne 0$ and  $b$ are real numbers. The conditions  $\mathfrak h (0)=0$ and $\mathfrak h (M)=M$ imply
 $a=1$ and $b=0$. Hence the result since   $\mathfrak h$ is the identity.
 \vskip .3cm
\item If  $\mathfrak h (\infty) \ne \infty$, $\mathfrak h$  has a unique expansion as
 \begin{equation}\label{h(x)=a+}
 \mathfrak h(x)= a + \frac b{x-c},
\end{equation}
where  $a,$ $b $ and $c$ are real numbers such that     $c\not\in\{ 0, \, x_1,  \cdots, \, x_t, M\}$ and $b\ne 0$. 
We now consider the respective values of   $b$ and $c$. 
\vskip .3cm
 \begin{itemize}
 \item  If  $b>0 $  the function  $x\mapsto \mathfrak h (x)$ is decreasing on the two intervals 
  $(-\infty, c )$ and $(c , + \infty)$. We consider the value of  $c$.
 \begin{itemize} 
 \item If  $c <x_t \ (<M)$,  we have   the inequality  $ \mathfrak h (x_t) > \mathfrak  h(M) =M,$  since $\mathfrak h$ is decreasing. 
 This contradicts the hypothesis     $\mathfrak h (x_t) <M$.
  \item If  $c >x_t  \ (>0)$,  we have
  $
  0=\mathfrak h (0) > \mathfrak h(x_t) $. This contradicts the hypothesis  $\mathfrak h (x_t)  >0$. We conclude that   $\mathfrak h$ is not of the form  \eqref{h(x)=a+} with 
  $b >0$.
  
 \end{itemize}
   
 \vskip .3cm
\item If  $b <0$, the function $x\mapsto \mathfrak h (x)$ is increasing on both intervals $(\infty, c)$ and $(c, +\infty)$. We now consider the value of $c$.
\begin{itemize}
\item If $c\not\in[0,M]$, the function $x\mapsto \mathfrak h (x)$ is increasing on  $(0, M)$, so   we have $\mathfrak h (x_i)=y_i$ for $1 \leq i\leq t$.

\item If  $0 < c <M$, the hyperbola $\{(x, \mathfrak h (x))\in \R^2: x \in \R, \, x\ne c\}$ has two asymptotes: one with abscissa $c$ and 
the other one with ordinate $a$.
Elementary considerations on this hyperbola lead to the  inequalities
$$
 \mathfrak h (0) >a > \mathfrak h (M).
 $$
 This contradicts the  hypothesis   $\mathfrak h (0)=0$ and  $\mathfrak h (M)=M$. In conclusion 
  $\mathfrak h$ is not of the form  \eqref{h(x)=a+} with $b<0$ and  $0< c <M$.
 \end{itemize}
 \end{itemize}
 \end{itemize}
 \end{proof}

 We will require the following  variant of Lemma \ref{tentative}. 
  
\begin{lemma}\label{tentative2} Let  
 $\mathfrak h$  be a homography belonging to ${\PGl}(2 ,\R)$,
$M>0$ be a real number,
$t\geq 1$ be an integer,
$x_1,\dots,x_t$ be $t$ real numbers satisfying $0< x_1< \cdots <x_t <M$,
$y_1,\dots,y_t$ be $t$ real numbers satisfying   $0 <y_1< \cdots < y_t <M$.
Assume
 $$
 \begin{cases}
 \mathfrak h \left(\left\{ x_i: 1\leq i \leq t\right\} \right) = \left\{ y_j : 1 \leq j \leq t\right\},\\
 \mathfrak h (0) =M \text{ and } \mathfrak h (M) =0.
 \end{cases}
 $$

Then for every  $1 \leq i\leq t$, one has the equality  $\mathfrak h (x_i) =y_{t+1-i}$. 
 \end{lemma}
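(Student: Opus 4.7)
The plan is to reduce Lemma \ref{tentative2} to the already-proved Lemma \ref{tentative} by pre-composing $\mathfrak{h}$ with the involution $\sigma : z \mapsto M - z$, which is itself a homography of $\PP^1(\R)$ and which swaps $0$ and $M$. Setting $\mathfrak{g} := \sigma \circ \mathfrak{h}$, we obtain an element of $\PGl(2,\R)$ satisfying $\mathfrak{g}(0) = \sigma(\mathfrak{h}(0)) = \sigma(M) = 0$ and $\mathfrak{g}(M) = \sigma(\mathfrak{h}(M)) = \sigma(0) = M$, so the fixing hypotheses of Lemma \ref{tentative} are in place for $\mathfrak{g}$.

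Next I would track where $\mathfrak{g}$ sends the ordered points $x_1 < \cdots < x_t$: by construction
$$
\mathfrak{g}\bigl(\{x_1,\dots,x_t\}\bigr) = \sigma\bigl(\{y_1,\dots,y_t\}\bigr) = \{M-y_t < M-y_{t-1} < \cdots < M-y_1\}.
$$
Relabelling $y'_j := M - y_{t+1-j}$, we obtain a strictly increasing sequence $0 < y'_1 < \cdots < y'_t < M$, so that all the hypotheses of Lemma \ref{tentative} are met by $\mathfrak{g}$ with the same $x_i$'s and the new $y'_j$'s.

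Applying Lemma \ref{tentative} to $\mathfrak{g}$ yields $\mathfrak{g}(x_i) = y'_i$ for every $1 \le i \le t$. Since $\sigma^2 = \mathrm{Id}$, we recover $\mathfrak{h} = \sigma \circ \mathfrak{g}$, and therefore
$$
\mathfrak{h}(x_i) \;=\; \sigma(\mathfrak{g}(x_i)) \;=\; \sigma(y'_i) \;=\; M - (M - y_{t+1-i}) \;=\; y_{t+1-i},
$$
which is the desired conclusion. I do not anticipate any genuine obstacle: all the delicate analytic case-work (on the sign of $b$ and the position of the pole $c$) has been absorbed into the proof of Lemma \ref{tentative}; the only thing to verify here is that $\sigma$ really is a homography, which is immediate. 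The only minor care needed is checking that $\mathfrak{g}$ is well-defined on the relevant points, i.e.\ that the pole of $\mathfrak{h}$ does not collide with any $x_i$, which follows from the assumption that the values $\mathfrak{h}(x_i) = y_i$ are finite real numbers.
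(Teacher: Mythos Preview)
Your proof is correct and follows essentially the same approach as the paper: both compose $\mathfrak{h}$ on the left with the involution $z\mapsto M-z$ to reduce to Lemma \ref{tentative}, relabel the image points as $y'_i = M - y_{t+1-i}$, and unwind. The paper's proof is terser but identical in content.
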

 
 \begin{proof} Introduce the homography 
  $\mathfrak g =\mathfrak s \circ \mathfrak h,$ where  $\mathfrak s $ is the symmetry  $\mathfrak s (x) =M-x$.
  The homography  $\mathfrak g$ fulfils the hypotheses of Lemma  \ref{tentative} 
  provided that we replace the points  $y_i$ ($1\leq i \leq t)$  by the points  $y'_i := M-y_{t+1-i}$.
We deduce that for all $i$ one has the equality  $\mathfrak g (x_i) =y'_i$, which gives  $\mathfrak h (x_i) =y_{t+1-i}$.
 \end{proof}

 \subsubsection{Proof of  Proposition \ref{1139}}
 
 \begin{proof}  Consider the polynomial 
 $$
 f(X) = L_{d,p} (X,1)$$
 and  its set of zeroes $\calZ(f) =\{0, \, 1,\, 2,\cdots,\,  d-2, \, p\}$.
In order to prove that the group of automorphisms of  $L_{d,p}$ is trivial
it suffices to prove that the unique homography  $\mathfrak h \in {\PGl }(2, \Q)$, such that  
\begin{equation}\label{h(Z)=Z}
\mathfrak h\left( \calZ (f)\right)= \calZ(f),
\end{equation}
is the identity as soon as the prime $p$ satisfies  $p\geq d$. 

As in the proof of  Proposition \ref{1026}, we will play with  the  $p$--adic valuation  of the elements  in $\Bir (f)$, defined in  \eqref{defBir(f)}.
We first notice that  for $x$ and  $y$ two distinct integers in $\{1, \, 2, \dots, d-2\}$, the elements  
$$ 
 \alpha:= [0, x, p ,y ], \, [p, x, 0 , y], [x,0,y, p] \text{ and } [x, p, y, 0],
$$ 
belong to $\Bir (f)$ and satisfy $v_p (\alpha) =1$. These are the only elements in $\Bir (f)$ which 
satisfy $v_p (\alpha) =1$.  In particular, if four distinct elements  $x,y,z,t$ in $\calZ(f)$ satisfy $v_p([x,y,z,t])=1$, then  $\{0,p\}\subset  \{x,y,z,t\}$.

 By  \eqref{1045}, we have 
 the following equality  
$$ 
 v_p\left( [\mathfrak  h(x),\mathfrak h (0), \mathfrak h( y) , \mathfrak h(p) ]\right) =  1,
$$ 
where $x$ and $y$ are integers as above.  Since $d\ge 5$, there exists an  integer   $x$  in $\{1, \, 2, \dots, d-2\}$ such that $\mathfrak  h(x)
\not\in \{0,p\}$.  We claim that there is another integer  $y\not=x$  in $\{1, \, 2, \dots, d-2\}$ with the same property, namely such that $\mathfrak  h(y)
\not\in \{0,p\}$. This is plain for $d\ge 6$; for $d=5$, the only case where this would not be true is when $\{1,2,3\}=\{x,y,z\}$ with $\{\mathfrak  h(y),\mathfrak  h(z)\}= \{0,p\}$, but this case is not possible since it would not be compatible with our requirement that 
$$
\{0,p\}\subset\left\{ \mathfrak  h(x),\mathfrak h (0), \mathfrak h( y) , \mathfrak h(p)\right\}.
$$
This proves our claim that there are two distinct integers   $x$ and  $y$  in the set $\{1, \, 2, \dots, d-2\}$ such that $\{ \mathfrak  h(x),\mathfrak  h(y)\}\cap \{0,p\}=\emptyset$. 
Therefore 
\begin{equation*} 
\{\mathfrak h (0),\mathfrak h(p)\}= \{0,p\}.
\end{equation*}
We consider two cases. 
   \begin{enumerate}[label=\upshape(\roman*), leftmargin=*, widest=iii]
\item 
Assume
\begin{equation*} 
\mathfrak h (0) =0 \text{ and } \mathfrak h (p) =p.
\end{equation*}
  Since  $\mathfrak h$ induces by restriction a bijective map of  $\calZ (f)$ onto itself, we may apply  Lemma 
  \ref{tentative}. We deduce that  $\mathfrak h (t) =t$ for $0\leq t \leq d-2$ and  $\mathfrak h (p)=p$.  Since a homography is determined by its restriction to a set with three elements, we deduce that 
  $\mathfrak h = {\Id}$. 
\item  
If
   \begin{equation}\label{cond1.0}\mathfrak h (0) =p \text{ and } \mathfrak h (p) =0,
\end{equation}
we apply  Lemma  \ref{tentative2} to deduce that $\mathfrak h (i) = d-1-i$, for  $1\leq i \leq d-2$. The unique homography $\mathfrak h$ satisfying this property is the symmetry 
defined by $\mathfrak h : z \mapsto d-1-z$. But such a formula is not compatible with the fact that $\mathfrak h (0) =p$. So there is no homography $\mathfrak h$ 
satisfying \eqref{h(Z)=Z}  and  \eqref{cond1.0}.

 \end{enumerate}
  
 We conclude that the set of $\mathfrak h$
 satisfying \eqref{h(Z)=Z} is reduced to the identity.
 The proof of Proposition \ref{1139} is complete. 
 \end{proof}

 \subsection{The family $\calL$ is regular (continued)}\label{SS:Lregular}

 We now investigate the condition {\rm (v)} of Definition \ref{agreable}. We will prove

\begin{proposition}\label{1320}
   For every  $d\geq 5$,  
     for every  $p$ with $p\geq d-1$,  and for all  $(x,y)\in \Z^2$ such that $L_{d,p}(x,y) \ne 0$.  the following inequality holds 
  \begin{equation}\label{majoration}
 \max\{ \vert x\vert ,\vert y \vert \}\leq   
 9 \cdot  \vert\,L_{d,p}(x,y)\,\vert^{\frac 1{d-1}}.
 \end{equation}

\end{proposition}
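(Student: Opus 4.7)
The plan is to prove the equivalent inequality $|L_{d,p}(x,y)|\geq (M/9)^{d-1}$ with $M:=\max\{|x|,|y|\}$, by direct case analysis on $(x,y)$, exploiting the factorization $L_{d,p}(x,y)=\prod_{r\in R}(x-ry)$ where $R:=\{0,1,\dots,d-2,p\}$.

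\emph{Trivial reductions.} Since $L_{d,p}(x,y)$ is a nonzero integer, $|L_{d,p}|\geq 1$, so the inequality holds automatically when $M\leq 9$. The identity $L_{d,p}(-x,-y)=(-1)^d L_{d,p}(x,y)$ allows reduction to $y\geq 0$; and the case $y=0$ is immediate from $L_{d,p}(x,0)=x^d$, which gives the stronger bound $|x|=|L_{d,p}|^{1/d}\leq |L_{d,p}|^{1/(d-1)}$. Henceforth I would assume $y\geq 1$, $M\geq 10$, and set $t:=x/y\in\Q$.

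\emph{Easy case: $t\leq-1$ or $t\geq 2p$.} In both subcases one verifies directly that $|x-ry|\geq |x|/2$ for every $r\in R$ (using $r\leq p\leq t/2$ in the second, and $|x-ry|\geq |x|$ in the first), hence $|L_{d,p}|\geq(|x|/2)^d$. Since $M=|x|$ here, this yields $M\leq 2|L_{d,p}|^{1/(d-1)}$, which is much stronger than required.

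\emph{Main case: $-1<t<2p$.} Let $r^*\in R$ minimize $m:=|x-r^*y|\geq 1$. For each $r\in R\setminus\{r^*\}$, the triangle inequality gives $|x-ry|\geq |r-r^*|y-m$. Splitting according as $r^*=k\in\{0,\dots,d-2\}$ or $r^*=p$, and under the generic assumption $m\leq y/2$, one would obtain
\[
|L_{d,p}(x,y)|\geq m\cdot(y/2)^{d-1}\prod_{r\in R\setminus\{r^*\}}|r-r^*|,
\]
where the combinatorial product equals $k!(d-2-k)!(p-k)$ in the first case and $p(p-1)\cdots(p-d+2)$ in the second. Combined with the obvious bound $M\leq(r^*+ 1/2)y$, the target inequality reduces to an explicit algebraic inequality; the hypothesis $p\geq d-1$ is used precisely to bound $p-k$ from below by $1$ (the critical situation being $k=d-2$, $p=d-1$). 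For $d$ large, Stirling's inequality \eqref{Equation:Stirling} settles this (the relevant ratio decays like $(e/9)^{d}$ up to polynomial factors, since $e<9$); for small $d\geq 5$ the finitely many remaining inequalities are handled by direct computation. The complementary subcase $m>y/2$ forces $t$ either into a gap between consecutive roots or onto a small value of $y$; an analogous estimate applies, the extra factor $m\geq y/2$ in the lower bound compensating for the weaker control on neighbouring factors.

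\emph{Main obstacle.} The hard part will be the uniform verification that the constant $9$ is adequate across every subcase. The tightest situations will be (i) $r^*=k$ middle-valued with $p=d-1$, where the reduced inequality becomes $((2k+1)/9)^{d-1}\leq k!(d-2-k)!(p-k)$, and (ii) the subcase $m>y/2$ with $t$ near the midpoint of the gap $(d-2,p)$; both cases are solved asymptotically by Stirling and for small $d$ by direct numerical inspection.
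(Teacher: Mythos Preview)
Your overall plan---reduce to the lower bound $|L_{d,p}(x,y)|\ge (M/9)^{d-1}$, pick the nearest root $r^*$, and control the remaining factors by $|r-r^*|\,y$ together with Stirling---is close in spirit to the paper's argument, and your ``generic'' subcase $m\le y/2$ is essentially correct and matches the paper's treatment of the intervals $(n-1)y\le x\le ny$ for $2\le n\le d-2$.

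The genuine gap is precisely your obstacle (ii), the subcase $t\in(d-2,p)$ with $m>y/2$. Your triangle-inequality bound $|x-ry|\ge |r-r^*|y-m$ (or the simpler $|x-ry|\ge(d-2-r)y$ for $r<d-2$) produces a lower bound for $|L_{d,p}|$ that is a fixed multiple of a power of~$y$, while in this range $M=x$ and $x$ can be as large as $\tfrac{p+d-2}{2}\,y$ with $p$ unbounded (the proposition assumes only $p\ge d-1$, no upper bound). Thus ``the extra factor $m\ge y/2$'' cannot compensate: from $|L_{d,p}|\gtrsim (d-2)!\,y^{d-2}\cdot m\,(py-x)$ you get at best a bound of order $y^{d-1}$ times something polynomial in $p$, whereas $(M/9)^{d-1}$ is of order $(py)^{d-1}$. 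Stirling alone does not close this; you need bounds in terms of $x$, not $y$.

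The paper supplies two ingredients you are missing. First, from $y\le x/(d-2)$ one gets $x-ky\ge x\,\dfrac{d-2-k}{d-2}$ for $0\le k\le d-3$, hence $\prod_{k=0}^{d-3}(x-ky)\ge \dfrac{(d-2)!}{(d-2)^{d-2}}\,x^{d-2}$, which already carries the right power of $x$. Second, for the two remaining factors one uses the elementary integer inequality $ab\ge a+b-1$ (for $a,b\in\Z_{\ge 1}$) to obtain $(x-(d-2)y)(py-x)\ge (p-d+2)y-1$, and then $py>x$ converts this into a lower bound $\gtrsim x/d$. Combining, $|L_{d,p}|\ge \dfrac{(d-2)!}{2(d-1)(d-2)^{d-2}}\,x^{d-1}$, after which Stirling gives the constant~$9$. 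Your sketch should incorporate these two steps to handle the large-gap case.
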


The inequality   \eqref{majoration} is equivalent to the lower bound 
 \begin{equation}\label{minoration}
 \vert L_{d,p} (x,y)\vert \geq  
\left(\,
\frac 1 9 \cdot \max\{\vert x\vert, \vert y \vert\} \, 
\right)^{d-1},
\end{equation}
under the hypotheses of Proposition \ref{1320}. We will rather  work with \eqref{minoration}.
 
 The proof of \eqref{minoration} depends on the relative sizes of $\vert x \vert$ and $\vert y \vert$. However, if we suppose that $xy\le 0$ and $ L_{d,p}(x,y)\not=0$, it is straightforward 
 to obtain the lower bound 
 $$
 \vert L_{d,p}(x,y)  \vert \geq \left( \max\{\vert x \vert, \vert y \vert\}\right)^{d-1}.
 $$
 Hence we may assume that $x$ and $y$  are not zero and have the same sign. Besides, since $\vert L_{d,p}(-x,-y)  \vert=\vert L_{d,p}(x,y)  \vert$, we will assume that both $x$ and $y$ are positive.

The basic equality  is the following one  
 \begin{equation}\label{vertLvert=}
\vert L_{d,p}(x,y) \vert =   x \cdot \vert x-y \vert \cdot \vert x-2y \vert \cdots \vert x-(d-2) y \vert \cdot \vert x -p y\vert.
\end{equation} 
We split the argument according to the relative sizes of $ x $ and $ y $.
 
\subsubsection{Assume $1\le  x  \leq  y $}  
Let $x$ and $y$ be positive integers such that $L_{d,p}(x,y) \ne 0$ with $y\ge x$. Hence $y\ge x+1$. We  
deduce from \eqref{vertLvert=}
$$
\begin{aligned}
\vert L_{d,p}(x,y) \vert &=   x \cdot (y-x) \cdot (2y-x)\cdots ((d-2) y -x)\cdot  (p y-x)
\\
&> x \cdot (y-x)\cdot   y   \cdot (2y) \cdots ((d-3)  y )\cdot ((p-1)y)
\\
&= x \cdot (y-x)\cdot (d-3)\, !\cdot (p-1)\,   y^{d-2}.
\end{aligned}
$$
If $y\ge 2x$ we have $x(y-x)\ge y-x\ge  y/2$, while for $x<y\le 2x$ we have $x(y-x)\ge  x\ge  y/2$. Hence
$$
\vert L_{d,p}(x,y) \vert >  \frac 1 2 (d-3)\, !(p-1)\left( \max\{ \vert x \vert, \vert y \vert \} \right)^{d-1}.
$$

So we proved

\begin{proposition}\label{Case1}
For every $d\geq 3$, for every $p\geq d-1$, for  every integers $x$ and $y$ such that $L_{d,p} (x,y) \ne 0$ and 
$\vert x \vert \leq \vert y \vert$, one has the inequality
$$
\vert L_{d,p}(x,y) \vert \geq   \max\{ \vert x \vert, \vert y \vert\}^{d-1}.
$$
\end{proposition}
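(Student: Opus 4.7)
The plan is to bound each of the $d$ absolute-value factors appearing in the factorisation \eqref{vertLvert=} from below by an explicit multiple of $y=\max\{|x|,|y|\}$, and then multiply.

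First I would dispose of the sign cases. Using $L_{d,p}(-x,-y)=(-1)^d L_{d,p}(x,y)$, I may assume $y\ge 0$, and from $|x|\le |y|$ together with $L_{d,p}(x,y)\neq 0$ I get $y\ge 1$ and $x\ne y$. The sub-case where $x\le 0$ is immediate: each factor $|x-ky|$ with $1\le k\le d-2$ equals $ky-x\ge ky$, the factor $|x-py|\ge py$, and $|x|\ge 1$, so
\[
|L_{d,p}(x,y)|\ge 1\cdot y\cdot 2y\cdots (d-2)y\cdot py=(d-2)!\,p\,y^{d-1}\ge y^{d-1}.
\]
Thus I may henceforth assume $1\le x\le y-1$, and in particular $y\ge 2$.

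In this regime, for $2\le k\le d-2$ one has $ky-x\ge (k-1)y$, whence $\prod_{k=2}^{d-2}(ky-x)\ge (d-3)!\,y^{d-3}$, and similarly $py-x\ge (p-1)y$. For the leftover pair $x(y-x)$ I would establish the sharp elementary inequality
\[
x(y-x)\ \ge\ y-1\ \ge\ y/2,
\]
valid for integers $1\le x\le y-1$ with $y\ge 2$: the first step rewrites as $(x-1)(y-1-x)\ge 0$, and the second uses $y\ge 2$. Multiplying the four bounds together yields
\[
|L_{d,p}(x,y)|\ \ge\ \tfrac12(d-3)!\,(p-1)\,y^{d-1},
\]
which is exactly the inequality already displayed in the body text preceding the statement.

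It then remains to verify that the numerical coefficient $\tfrac12(d-3)!(p-1)$ is at least $1$. Under $p\ge d-1$ with $d\ge 4$ this is immediate, since $p-1\ge d-2\ge 2$ and $(d-3)!\ge 1$, so $(d-3)!(p-1)\ge (d-2)!\ge 2$; the case $d=3$ can be settled by direct inspection of $L_{3,p}(x,y)=x(x-y)(x-py)$. The argument is essentially bookkeeping rather than deep, and the only mildly delicate point is the estimate for $x(y-x)$: the two trivial bounds $x\ge 1$ and $y-x\ge 1$ only give $x(y-x)\ge 1$, which is not enough to feed one full power of $y$ into the product; the sharper bound $x(y-x)\ge y/2$ is what makes the total exponent of $y$ come out to $d-1$ rather than $d-2$.
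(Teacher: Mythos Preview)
Your argument follows the paper's route exactly: reduce to positive $x,y$ with $1\le x\le y-1$, bound $ky-x>(k-1)y$ for $2\le k\le d-2$ and $py-x>(p-1)y$, then use $x(y-x)\ge y/2$ to reach $|L_{d,p}(x,y)|>\tfrac12(d-3)!(p-1)\,y^{d-1}$. Your derivation of $x(y-x)\ge y-1\ge y/2$ via the identity $x(y-x)-(y-1)=(x-1)(y-1-x)\ge 0$ is a slightly cleaner variant of the paper's two-case split ($y\ge 2x$ versus $x<y\le 2x$), but the substance is identical.

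There is, however, a genuine gap in your final sentence. The claim that ``the case $d=3$ can be settled by direct inspection'' is false: with $d=3$, $p=2$, $x=1$, $y=2$ one computes $|L_{3,2}(1,2)|=|1\cdot(-1)\cdot(-3)|=3<4=y^{d-1}$, so the stated inequality fails (and $(x,y)=(2,3)$ gives $8<9$ as well). The coefficient condition $\tfrac12(d-3)!(p-1)\ge 1$ that you correctly isolate amounts to $(d-3)!(p-1)\ge 2$; this holds for all $d\ge 4$ with $p\ge d-1$, but for $d=3$ it requires $p\ge 3$. In fact the paper's own ``So we proved'' carries the same oversight, and the proposition as printed is slightly overstated at $(d,p)=(3,2)$. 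This is harmless for the intended application, where one always has $d\ge 5$ and $p\ge d$.
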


\subsubsection{Assume $   (d-2) y  \leq  x  $} 
Let $x$ and $y$ be positive integers such that $L_{d,p}(x,y) \ne 0$ with $x\ge    (d-2) y$, hence $x\ge    (d-2) y+1 $. 
We  
deduce from  \eqref{vertLvert=}
$$
\vert L_{d,p}(x,y) \vert  =x \cdot (x-y )\cdot (x-2y )\cdots ( x-(d-2) y ) \cdot \vert x -p y\vert.
$$

\noindent
$\bullet$ If $y=1$, since  $x\ge d-1$, we have
$$
x-n = x\left(1-\frac n x\right)\ge  x\left(1-\frac n {d-1}\right)= x\left(\frac {d-n-1} {d-1}\right)
$$
for $0\le n\le d-2$; using the trivial lower bound  $\vert x -p \vert\ge 1$ together with Stirling's formula \eqref{Equation:Stirling}, we deduce 
$$
 |L_{d,p}(x,1)|\ge x \cdot (x-1 )\cdot (x-2 )\cdots ( x-(d-2)  )\ge \frac{(d-1)!}{(d-1)^{d-1}} \, x^{d-1}\ge \frac {x^{d-1}}{\rme^{d-1}}\cdotp
 $$

\noindent
$\bullet$ We assume now $y\ge 2$. As a consequence of the hypothesis $y\leq x/(d-2)$, we have the inequality
 $$
 x \cdot (x-y )\cdot (x-2y )\cdots ( x-(d-3) y )\ge \frac{(d-2)!}{(d-2)^{d-2}} \, x^{d-2}.
 $$

\noindent
$\diamond$  If $x>py$, then 
 $$
 x-(d-2) y \ge   x \left(1-\frac{d-2} p\right)\ge x \left(1-\frac{d-2} {d-1}\right) = \frac x {d-1}
 $$
 and the trivial lower bound $  x -p y \ge 1$ suffices to deduce
 $$
  L_{d,p}(x,y) \ge 
 \frac{(d-2)!}{(d-1)(d-2)^{d-2}} \, x^{d-1}.
$$

\noindent
$\diamond$ If $py>x$, then from $x-(d-2) y \ge 1$ and  $p y-x\ge 1$ we deduce
$$
 ( x-(d-2) y ) \cdot (py-x)\ge  ( x-(d-2) y )+ (p y-x)-1\ge y(p-d+2)-1.
 $$
 If $p=d-1$ we use the assumption $y\ge 2$ which yields
 $$
 y(p-d+2)-1= y-1\ge \frac y 2> \frac x {2p}= \frac x {2(d-1)},
 $$
 while for $p\ge d$ we use the lower bounds
  $$
 y(p-d+2)-1\ge  y(p-d+1) \ge py \left(1-\frac {d-1} p\right) > x \left(1-\frac {d-1} d \right) = \frac x d\cdotp
 $$
 
 Therefore, for $(d-2) y  \leq  x  $ and $y\ge 2$, we have
 $$
  | L_{d,p}(x,y)|  \ge   \frac{(d-2)!}{2(d-1)(d-2)^{d-2}} \, x^{d-1}\ge 
    \frac {x^{d-1}} {2d\rme^{d-2}} \cdotp
 $$
  
 We deduce 
 
\begin{proposition}\label{lemme6.5*}  For  $d\geq 3$,   $p$ prime $\geq d-1$ and  $(x,y) \in \Z^2$ such that 
$\vert x \vert \geq   (d-2)   \vert y \vert$ and 
$L_{d,p} (x,y) \ne 0$
we have 
$$
\left\vert L_{d,p} (x,y)\right\vert \geq  \frac 1 {d\rme^d}   \max\{ \vert x \vert, \vert y \vert\}^{d-1}.
$$
\end{proposition}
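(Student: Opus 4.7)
My plan is to reduce by symmetry to $x,y$ positive, dispose of trivial configurations, then exploit the factorisation
$$
\vert L_{d,p}(x,y)\vert = x\cdot\vert x-y\vert\cdot\vert x-2y\vert\cdots\vert x-(d-2)y\vert\cdot\vert x-py\vert,
$$
bounding each factor in terms of $x$ using the hypothesis $\vert x\vert\geq (d-2)\vert y\vert$ and Stirling's inequality \eqref{Equation:Stirling}. Since $\vert L_{d,p}(-x,-y)\vert=\vert L_{d,p}(x,y)\vert$ I may assume $x\geq 0$, and the non-vanishing hypothesis combined with $x\geq (d-2)\vert y\vert$ forces $x\geq 1$. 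The case $y=0$ gives $L_{d,p}(x,0)=x^d$; for $y<0$ every factor in the above product is at least $x$, so $\vert L_{d,p}(x,y)\vert\geq x^d$. Under $x\geq (d-2)\vert y\vert$ one has $\max\{\vert x\vert,\vert y\vert\}=x$, so in both situations the required bound is trivial. Hence the substantive case is $x,y\geq 1$ with $(d-2)y\leq x$.

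In this case I would split according to whether $y=1$ or $y\geq 2$. For $y\geq 2$, the relation $y\leq x/(d-2)$ yields, for $0\leq n\leq d-3$, the bound $x-ny\geq x(d-2-n)/(d-2)$, and multiplying these gives $\prod_{n=0}^{d-3}(x-ny)\geq (d-2)!(d-2)^{-(d-2)}x^{d-2}$. The two remaining factors $x-(d-2)y$ and $\vert x-py\vert$ I would treat by cases: if $x>py$ (possible only when $p=d-1$), then $x-(d-2)y\geq x/(d-1)$ and $\vert x-py\vert\geq 1$ suffices; if $x<py$, I would apply the elementary identity $(x-(d-2)y)+(py-x)-1=y(p-d+2)-1$ together with $y\geq 2$ (when $p=d-1$) or with $p\geq d$ to extract a factor of order $x/d$. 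For $y=1$, the inequality $x\geq d-1$ gives $x-n\geq x(d-1-n)/(d-1)$ for $0\leq n\leq d-2$, and the product of these $d-1$ factors alone is $\geq (d-1)!(d-1)^{-(d-1)}x^{d-1}\geq \rme^{-(d-1)}x^{d-1}$ by Stirling, with the factor $\vert x-p\vert\geq 1$ simply discarded.

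The main obstacle is to verify that the worst of the constants produced in these subcases is compatible with $1/(d\rme^d)$. Tracking the estimates, the tightest bound arises in the subcase $y\geq 2$, $x<py$, where one obtains $\vert L_{d,p}(x,y)\vert\geq (d-2)!\bigl(2(d-1)(d-2)^{d-2}\bigr)^{-1}x^{d-1}$; Stirling's formula reduces this to $x^{d-1}/(2d\rme^{d-2})$, which is at least $x^{d-1}/(d\rme^d)$. Comparing with the looser $y=1$ bound $\rme^{-(d-1)}x^{d-1}$ completes the argument. The delicacy lies entirely in the $y\ge 2$, $p=d-1$ configuration, where the two factors $x-(d-2)y$ and $\vert x-py\vert$ can simultaneously be as small as $1$, forcing us to extract the needed decay from the factorial product rather than from any pointwise cancellation.
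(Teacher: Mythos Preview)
Your proof is correct and follows exactly the same route as the paper: reduce to $x,y>0$, split into $y=1$ versus $y\geq 2$, and in the latter case treat $x>py$ and $x<py$ separately, bounding the product $(x-(d-2)y)(py-x)$ via $ab\geq a+b-1$ for positive integers. One small slip: your parenthetical that $x>py$ is ``possible only when $p=d-1$'' is false (take $d=5$, $p=5$, $y=2$, $x=11$), but this is harmless since your bound $x-(d-2)y\geq x/(d-1)$ only needs $y<x/p\leq x/(d-1)$, valid for every $p\geq d-1$.
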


\subsubsection{Assume $  (n-1) y  \leq  x  \le ny $ for some $n$ with $2\le n\le d-2$}  
We  
deduce from \eqref{vertLvert=}
$$
\vert L_{d,p}(x,y) \vert  =x \cdot (x-y ) \cdots
 (x-(n-1)y )\cdot (ny-x)\cdots
 ( (d-2) y-x ) \cdot  (p y-x).
$$
We have 
$$
x \cdot (x-y ) \cdots
 (x-(n-2)y )
 \ge (n-1)!y^{n-1}
 $$
 and
 $$
 \begin{aligned}
 ((n+1)y-x)\cdots
 ( (d-2) y-x ) \cdot  (p y-x)&\ge (d-n-2)!(p-n)y^{d-n-1}
 \\
 &\ge (d-n-1)! y^{d-n-1}.
 \end{aligned}
 $$
For the product of the two terms in the middle, if  $y=1$ we use the trivial  lower bound $ (x-(n-1)y )  (ny-x)\ge 1$ which yields  
 $$
 \vert L_{d,p}(x,y) \vert \ge (n-1)! (d-n-1)!  y^{d-2} \ge \frac{(n-1)! (d-n-1)!} {n^{d-2}} x^{d-2},
 $$
 while for $y\ge 2$ we use
 $$
 (x-(n-1)y )  (ny-x)\ge (x-(n-1)y ) + (ny-x)-1=y-1\ge \frac y 2,
 $$
 which yields 
$$
 \vert L_{d,p}(x,y) \vert \ge   \frac 1 2 (n-1)! (d-n-1)!  y^{d-1}\ge 
  \frac{ (n-1)! (d-n-1)!} {2n^{d-1}} x^{d-1}.
 $$
We now use Lemma \ref{Lemme:minoration}:
$$
\frac{(n-1)!(d-n-1)!}{n^{d-1}}=\frac{ n!(d-n)!}{n^d(d-n)}\ge \rme^{-(1+\rme^{-1})d}\frac 1 {d-n}, 
$$
from which we deduce 
$$
 \vert L_{d,p}(x,y) \vert \ge    
\rme^{-(1+\rme^{-1})d}\frac 1 {2(d-n)}  x^{d-1}.
 $$
This proves the following result:

\begin{proposition}\label{lemme6.6*}
 For  $d\geq 3$, $2\le n\le d-2$,  $p$ prime $\geq d-1$  and $x$ and  $y$ such that
$(n-1)|y|\le \vert x \vert \le  n \vert y \vert$ and 
$L_{d,p} (x,y) \ne 0$,
we have  
$$\left\vert L_{d,p} (x,y)\right\vert \geq \frac 1 {2(d-2)} \cdot \frac{ \max\{ \vert x \vert, \vert y \vert\}^{d-1}}{\rme^{(1+\rme^{-1})d}}\cdotp
$$
\end{proposition}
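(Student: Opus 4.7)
The plan is to follow the same factorisation strategy used in the previous case ($(d-2)y\le x$), adapted to the location of $x/y$ in the middle of the sequence of roots $0,y,2y,\dots,(d-2)y,py$. Since $L_{d,p}(-x,-y)=\pm L_{d,p}(x,y)$ and the case $xy\le 0$ already gives the much stronger trivial lower bound $\max\{|x|,|y|\}^{d-1}$, I reduce to the case $x\ge 1$, $y\ge 1$, and then write
$$
|L_{d,p}(x,y)|=x\cdot\prod_{k=1}^{n-1}(x-ky)\cdot(ny-x)\prod_{k=n+1}^{d-2}(ky-x)\cdot(py-x),
$$
where the hypothesis $(n-1)y\le x\le ny$ makes every factor on the right non-negative.

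Next I separate the product into three blocks: a ``left block'' $x(x-y)\cdots(x-(n-2)y)$, a ``middle block'' $(x-(n-1)y)(ny-x)$, and a ``right block'' $((n+1)y-x)\cdots((d-2)y-x)\cdot(py-x)$. For the left block, the inequality $x\ge(n-1)y$ gives $x-ky\ge(n-1-k)y$, so the block is at least $(n-1)!\,y^{n-1}$. For the right block, $x\le ny$ gives $ky-x\ge(k-n)y$ for $n+1\le k\le d-2$, and the last factor satisfies $py-x\ge (p-n)y\ge(d-1-n)y$ because $p\ge d-1$, yielding a lower bound $(d-n-1)!\,y^{d-n-1}$.

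The only delicate point is the middle block $(x-(n-1)y)(ny-x)$, since both factors may be as small as $1$. This is where I split on $y$: when $y=1$ the two factors are positive integers, so the middle block is $\ge 1$ and I obtain $|L_{d,p}(x,1)|\ge(n-1)!(d-n-1)!$, which combined with $x\le ny$ gives the required bound with $y^{d-1}=1\ge\max\{x,y\}^{d-1}/n^{d-1}$; when $y\ge 2$, since both factors are positive integers, I use the arithmetic-mean trick $(x-(n-1)y)(ny-x)\ge (x-(n-1)y)+(ny-x)-1=y-1\ge y/2$. In either case, after multiplying the three blocks, I get
$$
|L_{d,p}(x,y)|\ge \frac{(n-1)!(d-n-1)!}{2}\,y^{d-1}\ge\frac{(n-1)!(d-n-1)!}{2\,n^{d-1}}\,x^{d-1}.
$$

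Finally I clean up the constant using Lemma \ref{Lemme:minoration}: writing $(n-1)!(d-n-1)!/n^{d-1}=n!(d-n)!/\bigl(n^d(d-n)\bigr)$, the lemma gives $n!(d-n)!/n^d\ge \rme^{-(1+\rme^{-1})d}$, so
$$
|L_{d,p}(x,y)|\ge\frac{\rme^{-(1+\rme^{-1})d}}{2(d-n)}\,x^{d-1}\ge\frac{1}{2(d-2)}\cdot\frac{x^{d-1}}{\rme^{(1+\rme^{-1})d}}\cdotp
$$
Since $x\ge(n-1)y\ge y$ (recall $n\ge 2$), we have $x=\max\{x,y\}$, which converts this into the claimed inequality. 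The main obstacle I anticipate is ensuring the middle block is handled uniformly: the case $y=1$ really does need to be treated separately because the $(y-1)/2$ lower bound collapses, but the trivial ``positive integer'' bound rescues it without losing too much, and the eventual $1/\bigl(2(d-2)\bigr)$ factor absorbs the gap.
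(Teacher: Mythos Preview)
Your proof is correct and follows essentially the same approach as the paper: the same decomposition into left block, middle block, and right block; the same lower bounds $(n-1)!\,y^{n-1}$ and $(d-n-1)!\,y^{d-n-1}$ for the outer blocks (the latter via $p\ge d-1$); the same split on $y=1$ versus $y\ge 2$ for the middle block with the identity $(a-1)(b-1)\ge 0$; and the same appeal to Lemma~\ref{Lemme:minoration} to convert $(n-1)!(d-n-1)!/n^{d-1}$ into the exponential bound. The paper phrases the $y=1$ case with $y^{d-2}$ rather than $y^{d-1}$, but since $y=1$ this is cosmetic.
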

 
For $d\ge 5$, we have 
$$
2(d-2)\cdot \rme^{(1+\rme^{-1})d}<9^{d-1}.
$$

We may now gather  Propositions \ref{Case1}, \ref{lemme6.5*}  and  \ref{lemme6.6*}
to deduce \eqref{minoration}, which completes the proof of Proposition \ref{1320}.

\subsection{Estimating the number  of images by   $\calL$ of $(x,y)$ with $\max \{ \vert x \vert, \vert y \vert\} \ge 10$} 

 Gathering  Propositions \ref{1026} and   \ref{1320}, we proved that the family $\calL$ is $(10, 1, 1, 5, 9)$--regular. Furthermore, according to the parity of $d$,  the set $\calL_d$
 satisfies the conditions  {\rm  C1} or  {\rm  C2} of Corollary \ref{11/21/4}, by Proposition \ref{1139}. As a consequence of Corollary \ref{11/21/4} we have the following 
 
 \begin{proposition}\label{1498} For any $d \geq 5$, for every $\varepsilon >0$,  one has the equality 
 $$
 \calR_{\geq d} \left( \calL, B, 10\right)= \frac 1{(2,d)}\left( \sum_{d\leq p < 2d} A_{L_{d,p}}\right) B^{2/d} +O _{d, \varepsilon}\left( B^{\vartheta_d + \varepsilon}\right) + O_d\left( 
 B^{2/(d+1)}\right). 
 $$
 \end{proposition}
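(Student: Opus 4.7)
The plan is to reduce everything to a single application of Corollary~\ref{11/21/4} after verifying that all its hypotheses are in place. The preceding subsections of Section~6 have been structured precisely so that this verification is already essentially done; what remains is to assemble the pieces and identify the parameters correctly.

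First I would record that the parameters of the regular family are those stated in the target. Conditions~\ref{it:1} and~\ref{it:2} of Definition~\ref{agreable} are trivial by the very definition of $\calL$. Condition~\ref{it:3} holds with $A_1=1$ since for each $d\ge 5$ the set $\calL_d$ consists of the forms $L_{d,p}$ with $p$ prime in the interval $[d,2d)$, and in particular $\sharp\calL_d\le d\le d^{A_1}$. Condition~\ref{it:4} of non-isomorphism between distinct elements of $\calL$ is exactly Proposition~\ref{1026}, applied with the two forms $L_{d,p}$ and $L_{d,p'}$ for $p\ne p'$ (and the degrees being equal is built in). Condition~\ref{it:5}, with $A=10$, $d_0=1$, $d_1=5$ and $\kappa=9$, is precisely the content of Proposition~\ref{1320}, since $\ge 9 = \kappa$ is what that proposition's inequality \eqref{majoration} asserts, with the exponent $1/(d-d_0)=1/(d-1)$. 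Thus $\calL$ is $(10,1,1,5,9)$--regular, and Corollary~\ref{11/21/4} becomes applicable.

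Next I would identify the automorphism group in each parity, via Proposition~\ref{1139}: for $d$ odd, $\calL_d$ satisfies condition~C1 ($\Aut=\{\Id\}$, hence $W_F=1$), while for $d$ even it satisfies condition~C2 ($\Aut=\{\pm\Id\}$, hence $W_F=1/2$). In both cases the common value of $W_F$ on $\calF_d=\calL_d$ equals $1/(2,d)$, which explains the leading constant. Substituting into \eqref{restricted} of Corollary~\ref{11/21/4} then yields
$$
\calR_{\geq d}(\calL, B, 10) = \frac{1}{(2,d)}\Bigl(\sum_{F\in \calL_d} A_F\Bigr) B^{2/d} + O_{d,\varepsilon}\bigl(B^{\vartheta_d+\varepsilon}\bigr) + O_{d}\bigl(B^{2/d^\dag}\bigr),
$$
and the indexing of $\calL_d$ by the primes $d\le p<2d$ turns the leading sum into $\sum_{d\le p<2d} A_{L_{d,p}}$.

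The only remaining point is to pin down $d^\dag$. By definition $d^\dag=\inf\{d':d'>d,\ \calL_{d'}\ne\emptyset\}$, and by Bertrand's postulate there is a prime in $[d',2d')$ for every $d'\ge 2$, so $\calL_{d'}\ne\emptyset$ for every $d'\ge 5$. Hence $d^\dag=d+1$ as soon as $d\ge 5$, giving the error term $O_d(B^{2/(d+1)})$ in the stated form. I expect no real obstacle beyond carefully matching the parameters of Definition~\ref{agreable} to those established in Propositions~\ref{1026}, \ref{1139}, \ref{1320}; the genuine analytic work has already been done in the preceding lemmas, and this proposition is essentially the aggregation step.
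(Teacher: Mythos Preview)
Your proposal is correct and follows exactly the paper's own route: verify $(10,1,1,5,9)$--regularity via Propositions~\ref{1026} and~\ref{1320}, invoke Proposition~\ref{1139} to place $\calL_d$ under condition C1 or C2 according to parity, and then read off the asymptotic from Corollary~\ref{11/21/4}. Your explicit identification of $d^\dag=d+1$ via Bertrand's postulate is a detail the paper leaves implicit but uses, and your observation that $W_F=1/(2,d)$ in both parities is the right way to unify the two cases into the stated leading constant.
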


\subsection{Estimating the number  of images by   $\calL$ of $(x,y)$ with $\max \{ \vert x \vert, \vert y \vert\} < 10$} 

The difference
\begin{equation}\label{differenceL}
\mathcal R_{\geq d} (\mathcal L, B,0)-\mathcal R_{\geq d} (\mathcal L, B,10)
\end{equation} is bounded from above by  two times the cardinality
of the set
\begin{multline*}
\mathfrak {Er}_{\geq d} (B) \\ := \{ m\; :\;0 <  m=\vert L_{d',p} (x, y) \vert \leq B, \;   d\le d'\leq p < 2d',\, \max \{ \vert x \vert, \vert y \vert\}  \leq 9\}.
\end{multline*}
There are  $19^2$ pairs $(x,y)$ with $\max \{ \vert x \vert, \vert y \vert\} \le 9$. We first count the number of $m$ in $\mathfrak {Er}_{\geq d} (B)$ of the form   $\vert L_{d',p} (x, 0) \vert$, namely with $y=0$. For $x=\pm 1$ and $y=0$ we have $m=1$; for $2\le |x|\le 9$ and $y=0$, we have  $2^{d'}\le B$, hence there are at most $O_d(\log B)$ such values of $m$. 

We count now the number of  $m$ in $\mathfrak {Er}_{\geq d} (B)$ of the form $\vert L_{d',p} (x, y) \vert$ with $|y|\ge 1$. We have $|x-ny|\ge n-|x|\ge n-9\ge 2$ for $n\ge 11$, hence 
$$
B\ge m\ge \prod_{11\le n\le d'-2} (n-9)\ge 2^{d'-12}, 
$$
and therefore  $d'\le O( \log B)$.  It follows that  the number of pairs $(d',p)$ as above is bounded by  $O_d(\log^2 B)$. So we proved  
 $$
 \sharp \, \mathfrak{Er}_{\geq d} (B) =O_d(\log^2 B).
 $$
Combining this bound with \eqref{differenceL} and with  Proposition \ref{1498}, we obtain the equality \eqref{418} of  Theorem \ref{CaseofL}.


\subsection{Some results on $A_F$ for $F\in \calL$}\label{areaonL} The area of the fundamental domain associated to $L_{d, p}$ is,
by the definition \eqref{defAF}, equal to 
$$
A_{L_{d,p}} = \iint_{\calD(L_{d,p})}{\rmd} x\, {\rmd}  y,
$$
with
$$
\calD (L_{d,p}):= \bigl\{ (x,y) \in \R^2 : \left\vert x (x-y)(x-2y) \cdots (x-(d-2)y) (x-py)\right\vert \leq 1\bigr\}.
$$
 By the change of variables  $u=x$ and  $v=y/x$, we obtain
 $$
A_{L_{d,p} }= \iint_{\calD^*(L_{d,p})}\vert u\vert \, {\rmd} u\, {\rmd}  v,
$$
 with
 $$
 \calD^* (L_{d,p}):= \bigl\{ (u,v) \in \R^2 : \vert u \vert^d\cdot \left\vert  (1-v)(1-2v) \cdots (1-(d-2) v) (1-pv)   \right\vert \leq 1
 \bigr\}.
 $$
 Some elementary calculations transform $A_{L_{d,p}}$ into a single integral.
 
\begin{lemma} 
 For  $d \geq 5$ and  $p\geq d-1$ the following equalities hold
 $$
 A_{L_{d,p}}= \int_{-\infty}^\infty \frac{ {\rmd}  v}{\left(\, \vert 1-v\vert \cdot \vert 1-2 v\vert \cdots \vert 1-(d-2)v\vert \cdot \vert 1-p v\vert
 \,\right)^{2/d} }
 $$
 and 
  $$
 A_{L_{d,p}}= \int_{-\infty}^\infty \frac{ {\rmd}  t}{\left(\, \vert t \vert \cdot \vert t-1\vert \cdot \vert t-2  \vert \cdots \vert t-(d-2) \vert \cdot \vert t-p  \vert
 \,\right)^{2/d} }\cdotp
 $$
 \end{lemma}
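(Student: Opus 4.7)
The plan is to start from the double integral
$$
A_{L_{d,p}}= \iint_{\calD^*(L_{d,p})}|u|\,{\rmd} u\, {\rmd} v
$$
established immediately before the statement, and to evaluate it by Fubini's theorem, integrating out $u$ for each fixed $v$. For any $v$ such that the polynomial
$$
P(v):=(1-v)(1-2v)\cdots(1-(d-2)v)(1-pv)
$$
is non-zero, the $v$-slice of $\calD^*(L_{d,p})$ is the symmetric interval $\{u\in\R : |u|\le |P(v)|^{-1/d}\}$, and the elementary identity $\int_{-A}^{A}|u|\,{\rmd} u = A^2$ applied with $A=|P(v)|^{-1/d}$ gives a contribution $|P(v)|^{-2/d}$, which is exactly the integrand of the first formula. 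The finite set of $v$ with $P(v)=0$ has measure zero and can be discarded.

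Next, one should verify the convergence of the resulting integral: near each zero $v_0=1/k$ of $P$ the integrand behaves like $|v-v_0|^{-2/d}$, which is integrable because the hypothesis $d\ge 5$ forces $2/d<1$; and at infinity it decays like $|v|^{-2(d-1)/d}$, which is integrable since $2(d-1)/d>1$ for every $d\ge 3$. This establishes the first formula.

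To deduce the second formula from the first, the plan is to apply the involution $t=1/v$ of $\R\setminus\{0\}$, for which ${\rmd} v=-{\rmd} t/t^2$. The identity
$$
P(1/t)=\prod_{k\in\{1,\dots,d-2,p\}}\frac{t-k}{t}= \frac{(t-1)(t-2)\cdots(t-(d-2))(t-p)}{t^{d-1}}
$$
yields $\bigl|P(1/t)\bigr|^{-2/d}=|t|^{2(d-1)/d}\bigl(|t-1|\cdots|t-(d-2)|\cdot|t-p|\bigr)^{-2/d}$, and using $2(d-1)/d-2=-2/d$ one obtains
$$
\frac{\bigl|P(1/t)\bigr|^{-2/d}}{t^2}= \frac 1{\bigl(|t|\cdot|t-1|\cdots|t-(d-2)|\cdot|t-p|\bigr)^{2/d}}\cdotp
$$
Taking account of orientation (the change of variable $t=1/v$ reverses each of the half-lines $(-\infty,0)$ and $(0,\infty)$, hence preserves the total integral over $\R$ once the single point $v=0$ is removed), this is precisely the second formula of the lemma.

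The only delicate point is the bookkeeping of exponents in the change of variable: the Jacobian factor $1/t^2$ must cancel exactly against the gap $2(d-1)/d-2$, so as to convert the product of $d-1$ linear factors of $P$ into the product of $d$ linear factors appearing in $L_{d,p}(t,1)=t(t-1)\cdots(t-(d-2))(t-p)$. Once this exponent identity is in place, everything else is routine.
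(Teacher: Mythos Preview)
Your proof is correct and follows exactly the route the paper has in mind: the paper only says ``some elementary calculations transform $A_{L_{d,p}}$ into a single integral'' after setting up the double integral over $\calD^*(L_{d,p})$, and you have supplied precisely those calculations --- Tonelli in the $u$-variable for the first formula, then the inversion $t=1/v$ for the second. The convergence check and the exponent bookkeeping under $t=1/v$ are both handled correctly.
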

We will only work with the second expression of $A_{L_{d,p}}$.  So we introduce the function
 $$
 \lambda_{d,p} (t) :=  t(t-1)\cdots (t-(d-2))(t-p),
 $$
 which is the product of $d$ linear factors in $t$.
We split the interval of integration into $d$  intervals of length $1$ around the singularities $0$,..., $d-2$ and $p$ and three remaining intervals 
to write the equality:
\begin{multline}\label{decomposition}
A_{L_{d,p}} :=
\\
\left( \int_{-\infty}^{-1/2}+\int_{-1/2}^{1/2}+ \cdots+\int_{d-(5/2)}^{d-(3/2)} +\int_{d-(3/2)}^{p-(1/2)}+ \int_{p-(1/2)}^{p+(1/2)} +\int_{p+(1/2)}^{\infty}\right) \frac{{\rmd} t}{\vert \lambda_{d,p} (t)\vert^{2/d}}\cdotp
\end{multline}
We will give an upper bound and a lower bound for each of these positive  integrals in order to prove

\begin{proposition}\label{goal}  Uniformly for $d \to\infty$ and $d\leq p < 2d$ one has 
$$ 
\frac {\rme^2-o(1)}d \le A_{L_{d,p}} \le \frac {5\,\rme^2+2\rme+o(1)}d\cdotp
$$ 
\end{proposition}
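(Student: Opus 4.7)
The plan is to estimate each of the $d+3$ integrals appearing in the decomposition \eqref{decomposition}. The main contribution comes from the $d-1$ unit intervals around the roots $0,1,\ldots,d-2$. For each such $k$, the substitution $t = k + u$ with $u \in [-1/2, 1/2]$ gives
\[
|\lambda_{d,p}(k+u)| = |u| \cdot P_k \cdot \prod_{\substack{0\leq j\leq d-2\\ j \neq k}}\left|1 + \frac{u}{k-j}\right| \cdot \left|1 - \frac{u}{p-k}\right|,
\]
where $P_k := k!\,(d-2-k)!\,(p-k)$. Since $|k-j|\geq 1$ for $j\neq k$, a Taylor expansion combined with the harmonic-sum bound $\sum_{j\neq k}1/|k-j|=O(\log d)$ shows that the correction product raised to the power $2/d$ equals $1+o(1)$, uniformly in $k$ and $u$. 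Combined with the elementary identity $\int_{-1/2}^{1/2}|u|^{-2/d}\,{\rmd}u = d\cdot 2^{2/d}/(d-2)\to 1$, this yields
\[
I_k := \int_{k-1/2}^{k+1/2}\frac{{\rmd}t}{|\lambda_{d,p}(t)|^{2/d}} = (1+o(1))\,P_k^{-2/d}
\]
uniformly in $k$ and $p$.

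For the lower bound, the inequality $P_k \leq P_0 = (d-2)!\, p \leq 2d\,(d-2)!$ together with Stirling's formula $((d-2)!)^{1/d} = (d/\rme)(1+o(1))$ yields $I_k \geq (\rme^2-o(1))/d^2$ uniformly in $k$; summing over the $d-1$ unit intervals then gives $A_{L_{d,p}} \geq \sum_k I_k \geq (\rme^2 - o(1))/d$.

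For the upper bound, Stirling combined with a Riemann-sum approximation yields
\[
\sum_{k=0}^{d-2} I_k = (1+o(1))\,\frac{\rme^2}{d}\int_0^1 \frac{{\rmd}\alpha}{\alpha^{2\alpha}(1-\alpha)^{2(1-\alpha)}} \leq \frac{4\rme^2+o(1)}{d},
\]
since the integrand equals $\exp(-2\alpha\log\alpha - 2(1-\alpha)\log(1-\alpha))$, which is bounded above by $4$. The integral $I_p$ around $p$ satisfies $P_p\geq d!$, hence $I_p = o(1/d)$. For the middle gap $(d-3/2,p-1/2)$, the substitution $t=(d-2)+\beta(p-d+2)$ with Stirling produces an integrand of order $(\rme^2/d^2)(1+\beta)^{-2(1+\beta)}\beta^{2\beta}$. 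For each tail $\int_{-\infty}^{-1/2}$ and $\int_{p+1/2}^{\infty}$, the substitutions $t=-s$ or $t=p+s$ with $s\geq 1/2$, combined with the two lower bounds $|\lambda_{d,p}|\geq sP_*$ (where $P_*\in\{P_0,P_p\}$) and $|\lambda_{d,p}|\geq s^d$, and a split of the integration at $s\sim d/\rme$, produce a contribution of order $\rme/d$ per tail. A joint analysis of the middle gap and the tails then exploits that their $p$-dependences are opposite---tails largest for $p$ near $d$, middle gap largest for $p$ near $2d$---to show that their combined contribution is uniformly bounded by $(\rme^2 + 2\rme + o(1))/d$. Combining everything yields $A_{L_{d,p}} \leq (5\rme^2+2\rme+o(1))/d$.

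The hard part will be keeping all Stirling and Riemann-sum approximations uniform in $p \in [d, 2d)$ and executing the joint estimate of middle gap and tails with sufficient care to obtain the specific constants $\rme^2$ and $2\rme$: the tails are largest when $p$ is close to $d$ but are then balanced by a negligible middle gap, while for $p$ close to $2d$ the middle gap carries most of the error term, and neither bound taken in isolation gives the announced constants.
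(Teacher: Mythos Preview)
Your decomposition matches the paper's, and your treatment of the $d-1$ unit intervals around $0,1,\dots,d-2$ is correct: your Taylor/harmonic-sum argument and the paper's explicit formulas both give $(\rme^2-o(1))/d^2\le I_k\le (4\rme^2+o(1))/d^2$, so the lower bound and the $4\rme^2/d$ part of the upper bound are fine.

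The gap is in the tails and the middle gap. First, your splitting bound for a tail---using $|\lambda_{d,p}|\ge sP_\ast$ for $s$ small and $|\lambda_{d,p}|\ge s^d$ for $s$ large with a cut near $s\sim d/\rme$---actually yields about $2\rme/d$ per tail, since each half of the split already contributes $(\rme+o(1))/d$; so it does not give the constant $\rme/d$ you claim. Second, and more importantly, the ``joint analysis'' you describe is not needed, and the assertion that ``neither bound taken in isolation gives the announced constants'' is incorrect. The paper bounds each of the remaining pieces separately and exactly: H\"older's inequality, applied factor by factor to the product $\prod|t-j|^{-2/d}$, gives each tail $\le(\rme+o(1))/d$ directly; on the middle gap $(d-\tfrac32,\,p-\tfrac12)$ one has the uniform pointwise lower bound
\[
|\lambda_{d,p}(t)|\;\ge\;\frac{1\cdot 3\cdot 5\cdots(2d-3)}{2^{d}},
\qquad\text{hence}\qquad
|\lambda_{d,p}(t)|^{2/d}\ge(\rme^{-2}+o(1))\,d^{2},
\]
and since the interval has length at most $d+1$ (here the hypothesis $p<2d$ is used), the middle gap is $\le(\rme^2+o(1))/d$. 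Together with $O(1/d^2)$ from the unit interval around $p$, these add up to $(5\rme^2+2\rme+o(1))/d$ with no interplay between the pieces. Replace your splitting argument by H\"older and your sketched substitution on the middle gap by this pointwise bound, and the proof goes through.
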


 The last part of Theorem \ref{CaseofL} is obtained from this proposition after a summation over $d\leq p <2d$ and an application of the  Prime Number Theorem. 

\subsubsection{An auxiliary lemma}

\begin{lemma}\label{Lemma:factorielles}
For $d\to\infty$, we have 
$$
\left(1\cdot 3\cdot 5\cdots(2d-3)\right)^{1/d}=(2\rme^{-1}+o(1))d.
$$
\end{lemma}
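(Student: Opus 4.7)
The plan is to convert the product of odd numbers into a ratio of factorials and then feed it into Stirling's formula \eqref{Equation:Stirling}.

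First I would invoke the classical identity
$$
1\cdot 3\cdot 5\cdots (2d-3)=\prod_{k=1}^{d-1}(2k-1)=\frac{(2d-2)!}{2^{d-1}(d-1)!},
$$
obtained by completing with the missing even factors $2\cdot 4\cdots(2d-2)=2^{d-1}(d-1)!$ inside $(2d-2)!$. This reduces the asymptotic question to one on ratios of factorials.

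Next I would apply Stirling's formula \eqref{Equation:Stirling} to both $(2d-2)!$ and $(d-1)!$. The main parts $n^n\rme^{-n}$ combine, after cancellation of the powers of $2$, as
$$
\frac{(2d-2)^{2d-2}\rme^{-(2d-2)}}{2^{d-1}(d-1)^{d-1}\rme^{-(d-1)}}=2^{d-1}(d-1)^{d-1}\rme^{-(d-1)},
$$
the $\sqrt{2\pi n}$ terms collapse into the constant factor $\sqrt{2\pi(2d-2)}/\sqrt{2\pi(d-1)}=\sqrt 2$, and the two exponential corrections $\rme^{1/(12n)}$ contribute a factor $1+O(1/d)$. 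Hence
$$
1\cdot 3\cdots(2d-3)=\bigl(1+O(1/d)\bigr)\sqrt 2\cdot 2^{d-1}(d-1)^{d-1}\rme^{-(d-1)}.
$$

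Finally, taking the $d$-th root reduces everything to the elementary limits
$$
2^{(d-1)/d}=2+o(1),\quad (d-1)^{(d-1)/d}=(d-1)(d-1)^{-1/d}=d\bigl(1+o(1)\bigr),\quad \rme^{-(d-1)/d}=\rme^{-1}+o(1),
$$
together with $(\sqrt 2)^{1/d}=1+o(1)$, whose product gives the announced equivalent $(2\rme^{-1}+o(1))d$. There is no genuine obstacle here; the only point worth checking is that each multiplicative error coming out of Stirling's inequalities is of the form $1+O(1/d)$, so its $d$-th root is still $1+o(1)$ and does not disturb the main asymptotic constant $2\rme^{-1}$.
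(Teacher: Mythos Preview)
Your proof is correct and follows essentially the same route as the paper: rewrite the product of odd integers as a ratio of factorials and apply Stirling's formula \eqref{Equation:Stirling}. The only cosmetic difference is that the paper uses the identity $1\cdot 3\cdots(2d-3)=(2d)!/\bigl((2d-1)2^d d!\bigr)$ whereas you use the equivalent $(2d-2)!/\bigl(2^{d-1}(d-1)!\bigr)$; the subsequent extraction of the $d$-th root is the same.
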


\begin{proof}
We write
$$
 1\cdot 3\cdot 5\cdots(2d-3) = \frac {(2d-3)!}{2^{d-2}(d-2)!}=
\frac {(2d)!}{(2d-1)2^dd!} 
$$
and we use Stirling's formula \eqref{Equation:Stirling} which gives
$$ 
 \Bigl( \frac {2d}{\rm e}\Bigr)^d \frac{\sqrt 2}{(2d-1) \cdot {\rm e}^{1/12d}}\leq 1\cdot 3 \cdot 5\cdots (2d-3) \leq \Bigl( \frac {2d}{\rm e}\Bigr)^d \frac{\sqrt 2\cdot {\rm e}^{1/24d}}{2d-1}\cdotp
$$ 
\end{proof}

\subsubsection{Study of $\int_{-\infty}^{-1/2}$ and of $\int_{p+1/2}^\infty$}

\begin{lemma}\label{chr1}
For $d\to\infty $ and $p\ge d$, one has
$$
0\le   \int_{-\infty}^{-1/2} \frac{{\rmd}t}{\vert \lambda_{d,p} (t)\vert^{2/d}} \leq   \frac {\rme+o(1)}{d}\cdotp
$$
\end{lemma}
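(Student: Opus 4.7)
The plan is to make the substitution $u=-t$, which transforms the integral into
$$
\int_{1/2}^\infty \frac{du}{\bigl[u(u+1)(u+2)\cdots(u+d-2)(u+p)\bigr]^{2/d}}.
$$
The integrand then factors as $\prod_{k \in K}(u+k)^{-2/d}$, where $K = \{0, 1, 2, \ldots, d-2\} \cup \{p\}$ contains exactly $d$ elements. Since the conjugate exponents equal to $d,d,\dots,d$ satisfy $\sum_{k \in K} 1/d = 1$, H\"older's inequality would give
$$
\int_{1/2}^\infty \prod_{k \in K}(u+k)^{-2/d}\,du
\le \prod_{k \in K}\left(\int_{1/2}^\infty \frac{du}{(u+k)^{2}}\right)^{\!1/d}
= \prod_{k \in K}\frac{1}{(k+\tfrac12)^{1/d}}.
$$

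Next I would extract the asymptotic behavior of this product. The factor coming from $p$ is harmless, since $(p+\tfrac12)^{-1/d}\le 1$ (using $p\ge d\ge 1$), so it suffices to estimate
$$
\prod_{k=0}^{d-2}\left(k+\tfrac12\right) = \frac{1\cdot 3\cdot 5 \cdots (2d-3)}{2^{d-1}}.
$$
Applying Lemma~\ref{Lemma:factorielles},
$$
\left(\prod_{k=0}^{d-2}(k+\tfrac12)\right)^{\!1/d} = \frac{\bigl(1\cdot 3\cdots(2d-3)\bigr)^{1/d}}{2^{(d-1)/d}} = \frac{d}{\rme}\bigl(1+o(1)\bigr),
$$
whence $\bigl[\prod_{k \in K}(k+\tfrac12)\bigr]^{-1/d} \le (\rme + o(1))/d$, which is exactly the claimed upper bound. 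The lower bound $\ge 0$ is immediate from positivity of the integrand.

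No substantial obstacle is anticipated. The only genuine insight required is that the cardinality $|K|=d$ matches the exponent $2/d$ so that H\"older applies with all conjugate exponents equal to $d$; once this is noticed, every remaining estimate reduces to the Stirling-type asymptotic already packaged as Lemma~\ref{Lemma:factorielles}, together with the trivial inequality $(p+\tfrac12)^{-1/d}\le 1$.
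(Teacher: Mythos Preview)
Your proof is correct and follows essentially the same approach as the paper: the substitution $u=-t$ is cosmetic, after which both arguments apply H\"older's inequality with all $d$ exponents equal to $d$, evaluate each factor as $\int_{1/2}^\infty (u+k)^{-2}\,du=(k+\tfrac12)^{-1}$, discard the factor coming from $k=p$ via $(p+\tfrac12)^{-1/d}\le 1$, and invoke Lemma~\ref{Lemma:factorielles} for the asymptotic of the double-factorial product.
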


\begin{proof}
Using H\" older inequality and Lemma \ref{Lemma:factorielles}, we obtain
\begin{align*}
&\int_{-\infty}^{-1/2} \frac{{\rmd}t}{\vert \lambda_{d,p} (t)\vert^{2/d}}\leq \\
&\left( \int_{-\infty}^{-1/2}
\frac{{\rmd}t}{\vert t \vert^2}\right)^{1/d} 
\left( \int_{-\infty}^{-1/2}
\frac{{\rmd}t}{\vert t -1\vert^2}\right)^{1/d} \cdots 
\left( \int_{-\infty}^{-1/2}
\frac{{\rmd}t}{\vert t -(d-2) \vert^2}\right)^{1/d} \left( \int_{-\infty}^{-1/2}
\frac{{\rmd}t}{\vert t -p \vert^2}\right)^{1/d}
\\
& \leq  \left( \frac 21\cdot \frac 23\cdot \frac 25\cdots \frac 2{2d-3}\cdot \frac 2{2p+1}\right)^{1/d} 
\leq \left(\frac{ 2^d }{1\cdot 3 \cdot 5\cdots  (2d-3) \cdot(2p+1)}  \right)^{1/d}
\\
&\leq   \frac {\rme+o(1)}{d}\cdotp
\end{align*}
\end{proof}
Similarly, one proves 

\begin{lemma}\label{chr2}
For $d\to\infty$ and $p\ge d$, one has
$$
0\le \int_{p+(1/2)}^{\infty} \frac{{\rmd}t}{\vert \lambda_{d,p} (t)\vert^{2/d}}
\le
  \frac {\rme+o(1)}{d}\cdotp 
$$
\end{lemma}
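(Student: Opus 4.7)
The plan is to follow the same Hölder-inequality strategy used in the proof of Lemma \ref{chr1}, simply translated to the interval $[p+1/2,\infty)$. By Hölder's inequality applied to the $d$-fold product that defines $\lambda_{d,p}$, one has
$$
\int_{p+(1/2)}^{\infty} \frac{{\rmd}t}{|\lambda_{d,p}(t)|^{2/d}} \leq \prod_{k=0}^{d-2}\left(\int_{p+(1/2)}^{\infty}\frac{{\rmd}t}{(t-k)^{2}}\right)^{1/d} \cdot \left(\int_{p+(1/2)}^{\infty}\frac{{\rmd}t}{(t-p)^{2}}\right)^{1/d}.
$$
Each factor is an elementary integral: for $0\le k\le d-2$, since $p\ge d-1>k$, we get $\int_{p+1/2}^{\infty}(t-k)^{-2}\,{\rmd}t = (p+1/2-k)^{-1}=2/(2p-2k+1)$, while the singular factor at $t=p$ contributes $\int_{p+1/2}^{\infty}(t-p)^{-2}\,{\rmd}t = 2$. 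Hence the right-hand side equals
$$
\left(\frac{2^{d}}{(2p+1)(2p-1)(2p-3)\cdots(2p-2d+5)}\right)^{1/d}.
$$

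Next I would exploit the assumption $p\ge d$ to get a clean lower bound on the product in the denominator. Since $2p-2k+1 \geq 2d-2k+1$ for each $k=0,1,\dots,d-2$, the $(d-1)$-fold product satisfies
$$
(2p+1)(2p-1)\cdots(2p-2d+5)\ \geq\ 5\cdot 7\cdot 9\cdots(2d+1)\ \geq\ 1\cdot 3\cdot 5\cdots(2d-3),
$$
the second inequality coming from a term-by-term comparison of two products with the same number of factors.

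Finally, inserting this lower bound and invoking Lemma \ref{Lemma:factorielles}, which gives $\bigl(1\cdot 3\cdot 5\cdots(2d-3)\bigr)^{1/d}=(2\rme^{-1}+o(1))d$, yields
$$
\int_{p+(1/2)}^{\infty}\frac{{\rmd}t}{|\lambda_{d,p}(t)|^{2/d}}\ \leq\ \frac{2}{(2\rme^{-1}+o(1))d}\ =\ \frac{\rme+o(1)}{d},
$$
which is exactly the required estimate. There is no genuine obstacle here: the argument is a direct parallel of Lemma \ref{chr1}, with the only bookkeeping point being to check that the resulting denominator $5\cdot 7\cdots(2d+1)$ dominates termwise the product that appears in Lemma \ref{Lemma:factorielles}, so that the Stirling-type asymptotic applies unchanged.
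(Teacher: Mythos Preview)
Your proof is correct and follows essentially the same route as the paper: apply H\"older to the $d$ factors of $\lambda_{d,p}$, evaluate each elementary integral, use $p\ge d$ to bound the resulting product below by $1\cdot 3\cdot 5\cdots(2d-3)$, and conclude via Lemma~\ref{Lemma:factorielles}. The only cosmetic difference is that you pass through the intermediate product $5\cdot 7\cdots(2d+1)$ before the termwise comparison, whereas the paper writes the final inequality in one line.
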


\begin{proof}
For $t>p+\frac 12$, we have 
$$
|\lambda_{d,p} (t) |= \lambda_{d,p} (t)=  t ( t -1)\cdots ( t -(d-2))( t -p).
$$
Using H\" older inequality and Lemma \ref{Lemma:factorielles}, we obtain 
\begin{align*}
&
 \int_{p+(1/2)}^{\infty} \frac{{\rmd}t}{\vert \lambda_{d,p} (t)\vert^{2/d}}\leq \\
&\left(  \int_{p+(1/2)}^{\infty}
\frac{{\rmd}t}{  t^2}\right)^{1/d} 
\left(  \int_{p+(1/2)}^{\infty}
\frac{{\rmd}t}{ ( t -1)^2}\right)^{1/d} \cdots 
\left(  \int_{p+(1/2)}^{\infty}
\frac{{\rmd}t}{( t -(d-2))^2}\right)^{1/d} \left(  \int_{p+(1/2)}^{\infty}
\frac{{\rmd}t}{( t -p)^2}\right)^{1/d}
\\
& \leq  \left( \frac 2{2p+1}\cdot \frac 2{2p-1}\cdot \frac 2{2p-3}\cdots \frac 2{2p-2d+5}\cdot \frac 2 1\right)^{1/d} 
\\
&\leq \left(\frac{ 2^d }{1\cdot 3 \cdot 5\cdots  (2d-3)  }  \right)^{1/d}
\\
&\leq   \frac {\rme+o(1)}{d}\cdotp
\end{align*}
\end{proof}

\subsubsection{Study of $\int_{d-3/2}^{p-1/2}$}\label{6.3.2} 

\begin{lemma}\label{chr3}
For $d\to\infty$ and $d\leq p < 2d$, one has
$$
0 \le \int_{d-3/2}^{p-1/2} \frac{{\rmd}t}{\vert \lambda_{d,p} (t)\vert^{2/d}} \le \frac {\rme^2 +o(1)}d\cdotp
$$
\end{lemma}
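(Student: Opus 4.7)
The plan is to mimic the proofs of Lemmas \ref{chr1} and \ref{chr2}: apply H\"older's inequality with $d$ weights each equal to $1/d$ to the factorization
\[
|\lambda_{d,p}(t)|^{-2/d}=\prod_{k=0}^{d-2}|t-k|^{-2/d}\cdot |t-p|^{-2/d},
\]
which has precisely $d$ factors. This reduces the problem to estimating $d$ univariate integrals of the form $\int_{d-3/2}^{p-1/2}(t-a)^{-2}\,\rmd t$, one for each root $a$ of $\lambda_{d,p}$.

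First I would compute these one-dimensional integrals. For $0\le k\le d-3$, the interval of integration lies strictly to the right of $k$, and a direct calculation yields
\[
\int_{d-3/2}^{p-1/2}\frac{\rmd t}{(t-k)^2}=\frac1{d-3/2-k}-\frac1{p-1/2-k}\le\frac1{d-3/2-k}\cdotp
\]
For $k=d-2$ and for the factor $p-t$, the corresponding singularity lies at distance exactly $1/2$ from one endpoint of the interval, and both integrals evaluate to $2-1/(p-d+3/2)<2$. Substituting into the H\"older estimate then gives
\[
\int_{d-3/2}^{p-1/2}\frac{\rmd t}{|\lambda_{d,p}(t)|^{2/d}}\le \left(\frac{2^{d-2}}{1\cdot 3\cdot 5\cdots(2d-3)}\right)^{1/d}\cdot 4^{1/d},
\]
and Lemma \ref{Lemma:factorielles}, according to which $(1\cdot 3\cdots(2d-3))^{1/d}=(2\rme^{-1}+o(1))d$, shows that the right-hand side is asymptotic to $(\rme+o(1))/d$ as $d\to\infty$, uniformly for $d\le p<2d$. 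Since $\rme<\rme^2$, the lemma follows.

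No step is particularly delicate: the H\"older setup and the Stirling-type estimate of Lemma \ref{Lemma:factorielles} have already been the engine of the two previous lemmas. The only novelty here is that the interval has a finite right endpoint $p-1/2$ sitting at distance $1/2$ from a singularity of $|\lambda_{d,p}|$; this produces the extra factor $4^{1/d}\to 1$ compared to the infinite integrals treated before, but does not spoil the leading constant, so in fact the approach gives a slightly stronger bound than the one claimed. The only other thing to check is uniformity in $p$, which is immediate because each individual bound is either independent of $p$ (the $1/(d-3/2-k)$ terms) or dominated by the universal constant $2$.
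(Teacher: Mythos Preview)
Your proof is correct, and it actually yields the sharper bound $(\rme+o(1))/d$ rather than the $(\rme^2+o(1))/d$ stated in the lemma. Your H\"older argument exactly parallels Lemmas \ref{chr1} and \ref{chr2}, and the bookkeeping (the $d-2$ factors $2/(2d-3-2k)$, the two endpoint factors each bounded by~$2$) is right.

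The paper, however, does not repeat the H\"older trick here. Instead it observes that on the whole interval $(d-\tfrac32,\,p-\tfrac12)$ one has the pointwise lower bound
\[
|\lambda_{d,p}(t)|\ \ge\ \frac{(2d-3)(2d-5)\cdots 3\cdot 1}{2^{d}},
\]
coming from $t-n\ge d-\tfrac32-n$ and $p-t>\tfrac12$, so that $|\lambda_{d,p}(t)|^{2/d}\ge (\rme^{-2}+o(1))d^2$ by Lemma \ref{Lemma:factorielles}; multiplying the reciprocal by the length $p-d+1\le d+1$ of the interval then gives $(\rme^2+o(1))/d$. This is shorter and more elementary---no inequality beyond ``minimum of the integrand times length''---but it loses a factor of~$\rme$ compared to your approach, essentially because the pointwise minimum ignores that the factors $t-n$ grow along the interval. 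Since only an upper bound of order $1/d$ is needed for Proposition \ref{goal}, the constant is immaterial there; your route nonetheless shows that the same machinery used for the two infinite tails suffices here as well, with no new idea required.
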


\begin{proof}
For $t$ in the interval $(d-(3/2),p-(1/2))$,  we have $p-t> 1/2$,
$$
|\lambda_{d,p} (t) |=  t ( t -1)\cdots ( t -(d-2))( p-t)
$$ 
and,  for $0\le n\le d-2$,  
$$
t-n>\frac{2d-2n-3} 2,
$$
hence
$$
\vert\lambda_{d,p} (t)\vert\ge\frac{(2d-3)\cdot(2d-5)\cdots 3\cdot 1}{2^d}
$$
and therefore 
$$
\vert\lambda_{d,p} (t)\vert^{2/d}\ge(\rme^{-2}+o(1))d^2
$$
by  Lemma \ref{Lemma:factorielles}. 
Since $d\le p< 2d$,  the interval of integration has length at most $d+1$,  and so we deduce
$$
\int_{d-3/2}^{p-1/2} \frac{{\rmd}t}{\vert\lambda_{d,p} (t)\vert^{2/d}}
 \le\frac {\rme^2 +o(1)}d\cdotp
$$
\end{proof}

\subsubsection{Study of $\int_{p-1/2}^{p+1/2} $}

\begin{lemma}\label{chr3*}
For $d\geq 5$ and $d\leq p < 2d$, one has
$$
0\le \int_{p-1/2}^{p+1/2} \frac{{\rmd}t}{\vert \lambda_{d,p} (t)\vert^{2/d}} =O\left( \frac 1 {d^2}\right). 
$$
\end{lemma}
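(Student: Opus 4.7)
The plan is to factor $\lambda_{d,p}(t)=P_d(t)\cdot (t-p)$ where $P_d(t)=t(t-1)\cdots(t-(d-2))$. On the small interval $[p-1/2,p+1/2]$, the first factor $P_d(t)$ is bounded away from zero and can be estimated uniformly using $p\ge d$, while the second factor produces an integrable singularity at $t=p$ that contributes only a constant.

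For the first step, on $[p-1/2,p+1/2]$ we have $t-n\ge p-n-1/2\ge d-n-1/2\ge 3/2$ for every $0\le n\le d-2$. Consequently
$$
P_d(t)\ge \prod_{n=0}^{d-2}\bigl(d-n-\tfrac12\bigr)= \frac{(2d-1)(2d-3)\cdots 3}{2^{d-1}}=\frac{(2d-1)!!}{2^{d-1}}\cdotp
$$
Then the same Stirling computation that underlies Lemma \ref{Lemma:factorielles} (applied to $(2d-1)!!=(2d)!/(2^d d!)$) gives $\bigl((2d-1)!!/2^{d-1}\bigr)^{1/d}=(\rme^{-1}+o(1))\,d$, hence $P_d(t)^{2/d}\ge (\rme^{-2}+o(1))\,d^2$ uniformly for $t\in[p-1/2,p+1/2]$.

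For the second step, the contribution of the factor $(t-p)$ to the integral is controlled by the elementary identity
$$
\int_{p-1/2}^{p+1/2} \frac{\rmd t}{|t-p|^{2/d}}= 2\int_0^{1/2}\frac{\rmd s}{s^{2/d}}=\frac{2\cdot (1/2)^{1-2/d}}{1-2/d}=O(1),
$$
which is valid for $d\ge 5$ (so that $2/d<1$ and the singularity is integrable). Combining the two bounds yields
$$
\int_{p-1/2}^{p+1/2}\frac{\rmd t}{|\lambda_{d,p}(t)|^{2/d}}\le \bigl(\rme^{2}+o(1)\bigr)\frac1{d^2}\int_{p-1/2}^{p+1/2}\frac{\rmd t}{|t-p|^{2/d}}=O\!\left(\frac1{d^2}\right),
$$
which is the asserted bound. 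The only step that requires any care is verifying that the crude lower bound $t-n\ge d-n-1/2$ is uniform across the whole interval and across all $n\le d-2$, and that the Stirling estimate transfers cleanly from $(2d-3)!$ (as used in Lemma \ref{Lemma:factorielles}) to the shifted product $(2d-1)!!/2^{d-1}$ here; both are routine.
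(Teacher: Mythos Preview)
Your proof is correct and follows essentially the same approach as the paper: factor $\lambda_{d,p}(t)=\calM(t)(t-p)$ with $\calM(t)=t(t-1)\cdots(t-(d-2))$, bound $\calM(t)$ from below on $[p-1/2,p+1/2]$ by a product of half-integers whose $(2/d)$-th power is $(\rme^{-2}+o(1))d^2$ via the Stirling computation of Lemma~\ref{Lemma:factorielles}, and integrate the remaining $|t-p|^{-2/d}$ to an $O(1)$ constant. The only cosmetic difference is that the paper reaches the lower bound by noting $\min_{|t-p|\le 1/2}|\calM(t)|=\calM(p-1/2)\ge \calM(d-3/2)$, whereas you bound each factor $t-n\ge d-n-1/2$ directly; your product is $(2d-1)!!/2^{d-1}$ rather than the paper's $(2d-3)!!/2^{d-1}$, but the asymptotics are identical.
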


We introduce the polynomial 
$$
{\calM}( t) := t(t-1)\cdots (t-(d-2))
$$
of degree $d-1$. 
It is easy to see that 
$$
\min_{\vert t -p\vert \leq 1/2} \vert {\calM} (t)\vert =  \vert {\calM} (p-(1/2))\vert \geq {\calM} (d-(3/2)) =  \frac 12 \cdot \frac 32\cdot \frac 52 \cdots \frac{2d-5}2\cdot  \frac{2d-3}2  ,
$$
hence by Lemma  \ref{Lemma:factorielles}, we have
$$
\min_{\vert t -p\vert \leq 1/2} \vert {\calM} (t)\vert ^{2/d}\ge (\rme^{-2}+o(1))d^2.
$$
Since
$$
\int_{p-1/2}^{p+1/2} \frac{{\rmd}t}{\vert t -p\vert^{2/d}}=O(1)
$$
we conclude
$$
\begin{aligned}
\int_{p-1/2}^{p+1/2}\frac{{\rmd}t}{\vert \lambda_{d,p} (t)\vert^{2/d}} 
&
\leq 
\left(\int_{p-1/2}^{p+1/2} \frac{{\rmd}t}{\vert t -p\vert^{2/d}}\right) \cdot \left( \frac 1{\min_{\vert t-p\vert \leq 1/2}\vert {\calM} (t)\vert} \right)^{2/d}
\\
&=O\left( \frac 1 {d^2}\right). 
\end{aligned}
$$ 
  
\subsubsection{Study of the remaining integrals} 
We are now concerned, for $\nu=0, 1 \dots, d-2$, with the integrals
$$
\calI_\nu = \calI_{d,p, \nu}=\int_{\nu-1/2}^{\nu +1/2} \frac{{\rmd} t}{\vert \lambda_{d,p} (t)\vert^{2/d}},
$$
for which we want to find an upper and a lower bound.  We split the product defining $\lambda_{d,p} (t)$ into four pieces
\begin{equation}\label{lambdalambda}    
\lambda_{d,p} (t) =(t-\nu)\cdot (t-p) \cdot  \lambda_\nu^- (t)  \cdot 
 \lambda_{d,\nu} ^+(t),
\end{equation}
with
$$
\lambda_\nu^- (t) := \prod_{0\leq k < \nu} \left( t -k\right) 
\quad \text{and}\quad 
 \lambda_{d,\nu} ^+(t):= \prod_{\nu < k \leq d-2} \left( t-k\right).
$$ 
We have
\begin{equation}\label{first}
  \calI_\nu  \leq 
\left( \int_{\nu-1/2}^{\nu+1/2} \frac{{\rmd}t}{\vert t-\nu\vert^{2/d}} \right)\cdot \left(\min  \vert \lambda_{\nu}^- (t)\vert  \right)^{-2/d}\cdot 
\left(\min  \vert \lambda_{d,\nu}^+ (t)\vert  \right)^{-2/d}\cdot \left( \min \vert t-p\vert \right)^{-2/d},
\end{equation}
and
\begin{equation}\label{second}
  \calI_\nu  \geq 
\left( \int_{\nu-1/2}^{\nu+1/2} \frac{{\rmd}t}{\vert t-\nu\vert^{2/d}} \right)\cdot \left(\max  \vert \lambda_{\nu}^- (t)\vert  \right)^{-2/d}\cdot 
\left(\max  \vert \lambda_{d,\nu}^+ (t)\vert  \right)^{-2/d}\cdot \left( \max \vert t-p\vert \right)^{-2/d}
\end{equation}
where all the maximum and minimum are taken for $\nu-1/2\leq t \leq \nu+1/2$. Direct computations transform \eqref{first} and \eqref{second} into
$$
\begin{aligned}
(1-o(1))
\left(\max  \vert \lambda_{\nu}^- (t)\vert  \right)^{-2/d}\cdot 
&
\left(\max  \vert \lambda_{d,\nu}^+ (t)\vert  \right)^{-2/d} 
\le
\calI_\nu  \le 
\\
&(1+o(1))
 \left(\min  \vert \lambda_{\nu}^- (t)\vert  \right)^{-2/d}\cdot 
\left(\min  \vert \lambda_{d,\nu}^+ (t)\vert  \right)^{-2/d}
\end{aligned}
$$
which is also
\begin{align}\label{third}(1-o(1))
\vert \lambda_\nu^-(\nu +1/2)\vert^{-2/d}\cdot 
&
 \vert \lambda_{d,\nu}^+(\nu-1/2)\vert^{-2/d} 
\le \calI_\nu  \le 
\\
\notag
&(1+o(1))
\vert \lambda_\nu^-(\nu -1/2)\vert^{-2/d}\cdot  \vert \lambda_{d,\nu}^+(\nu+1/2)\vert^{-2/d},
\end{align}
uniformly for $d \to\infty$ and $d\leq p <2d$.

For $1\leq \nu \leq d-2$, we have the equalities
$$
\lambda_\nu^-(\nu +1/2) = \frac {(2\nu +1)(2\nu-1) \cdots 3}{2^\nu} =\frac{(2\nu+1)\,!}{2^{2\nu} \cdot \nu\,!}
=\frac{(2\nu)\,!}{2^{2\nu} \cdot \nu\,!}\cdot \frac 1 {2\nu+1},
$$
$$
\lambda_\nu^-(\nu -1/2) = \frac {(2\nu -1)(2\nu-3) \cdots 1}{2^\nu} =\frac{(2\nu-1)\,!}{2^{2\nu-1} \cdot  (\nu\ -1)\,!}
=\frac{(2\nu)\,!}{2^{2\nu} \cdot \nu\,!},
$$
and for $0\leq \nu \leq d-3$, we have
$$
\vert \,\lambda_{d,\nu}^+(\nu+1/2)\, \vert= \frac{ (2d^*-1)(2d^*-3) \cdots 3\cdot 1}{2^{d^*}}= \frac{(2d^*-1)\,!}{2^{2d^*-1}\cdot (d^*-1)\, !}
=\frac{(2d^*)\,!}{2^{2d^*} \cdot d^*\,!},
$$
$$
\vert\, \lambda_{d,\nu}^+(\nu-1/2)\, \vert =\frac{  (2d^*+1)(2d^*-1)\cdots5\cdot 3}{2^{d^*}}=\frac{(2d^*+1)\, !}{2^{2d^*}\cdot d^*\, !}
=\frac{(2d^*)\,!}{2^{2d^*} \cdot d^*\,!}\cdot (2d^*+1),
$$
with the notation  $d^* =d-2-\nu$. 
Furthermore, since we have empty products in the  decomposition \eqref{lambdalambda} ,  we have
\begin{equation}\label{exception}
\lambda_0^- (1/2) =\lambda_0^-(-1/2) = \lambda_{d, d-2}^+ (d-3/2) =\lambda_{d,d-2}^+ (d-5/2) =1.
\end{equation}

The following lemma shows that the inequalities \eqref{third} are sharp.

\begin{lemma}\label{lemma6.8} Uniformly for $0 \leq \nu \leq d-2$ and $d\rightarrow \infty$ one has
$$
1-o(1)\leq \left( \frac{ \vert \lambda_\nu^-(\nu -1/2)\vert \cdot  \vert \lambda_{d,\nu}^+(\nu+1/2)\vert}
{ \vert \lambda_\nu^-(\nu +1/2)\vert \cdot  \vert \lambda_{d,\nu}^+(\nu-1/2)\vert}
\right)^{-2/d} \leq 1+o(1)
$$
\end{lemma}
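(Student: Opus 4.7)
The plan is to exploit the explicit closed forms for $\lambda_\nu^-(\nu\pm 1/2)$ and $\lambda_{d,\nu}^+(\nu\pm 1/2)$ displayed just before the lemma, compute the ratio inside the parentheses exactly, and then observe that whatever its size, taking the $(-2/d)$--th power flattens it to $1+o(1)$.

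First I would treat the generic range $1\leq \nu\leq d-3$. From the equalities
$$
\lambda_\nu^-(\nu+1/2)=\frac{(2\nu)!}{2^{2\nu}\nu!}\cdot\frac 1{2\nu+1},
\qquad
\lambda_\nu^-(\nu-1/2)=\frac{(2\nu)!}{2^{2\nu}\nu!},
$$
$$
|\lambda_{d,\nu}^+(\nu+1/2)|=\frac{(2d^*)!}{2^{2d^*}d^*!},
\qquad
|\lambda_{d,\nu}^+(\nu-1/2)|=\frac{(2d^*)!}{2^{2d^*}d^*!}\cdot (2d^*+1),
$$
with $d^*=d-2-\nu$, the ratio inside the parentheses simplifies telescopically to
$$
R_\nu := \frac{|\lambda_\nu^-(\nu-1/2)|\cdot|\lambda_{d,\nu}^+(\nu+1/2)|}{|\lambda_\nu^-(\nu+1/2)|\cdot|\lambda_{d,\nu}^+(\nu-1/2)|}=\frac{2\nu+1}{2d^*+1}=\frac{2\nu+1}{2d-2\nu-3}.
$$
For the two extreme indices $\nu=0$ and $\nu=d-2$, the exceptional equalities \eqref{exception} show that one of the two pairs of factors becomes $1$, and the resulting ratios are $1/(2d-3)$ and $2d-3$ respectively, which fit within the same bounds.

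In every case, $R_\nu$ satisfies $(2d-3)^{-1}\leq R_\nu\leq 2d-3$. Since $(2d-3)^{\pm 2/d}=\exp\bigl(\pm (2/d)\log(2d-3)\bigr)=1+o(1)$ as $d\to\infty$, we obtain uniformly in $\nu\in\{0,1,\dots,d-2\}$,
$$
1-o(1)\le R_\nu^{-2/d}\le 1+o(1),
$$
which is exactly the statement of Lemma \ref{lemma6.8}. The only mildly delicate point is to check the two boundary cases $\nu=0$ and $\nu=d-2$ where one factor of the defining quotient degenerates to $1$; apart from that bookkeeping, everything reduces to the elementary fact that a quantity of polynomial size in $d$ raised to the power $2/d$ tends to $1$.
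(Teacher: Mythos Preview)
Your proof is correct and is exactly the elaboration the paper has in mind when it writes ``obvious consequence of the explicit formulas given above'': you compute the ratio from the displayed closed forms, see it is bounded above and below by a fixed power of $d$, and conclude that the $(-2/d)$--th power is $1+o(1)$. One small arithmetical slip (inherited from a typo in the paper's display, where the final factor in $\lambda_\nu^-(\nu+1/2)$ should be $\cdot(2\nu+1)$ rather than $\cdot\frac{1}{2\nu+1}$): the correct ratio for $1\le\nu\le d-3$ is $R_\nu=\dfrac{1}{(2\nu+1)(2d^*+1)}$ rather than $\dfrac{2\nu+1}{2d^*+1}$, but this is still trapped between $(2d)^{-2}$ and $1$, so your conclusion is unaffected.
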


 \begin{proof} Obvious consequence of the explicit formulas given above.
\end{proof}

For $0\le \nu\le d-2$, let 
$$
\Lambda = \Lambda(d, \nu)  := \vert \lambda_\nu^-(\nu -1/2)\vert^{-2/d}\cdot  \vert \lambda_{d,\nu}^+(\nu+1/2)\vert^{-2/d} 
$$
As a consequence of the explicit formulas of  $\lambda_\nu^-$ and $\lambda_{d,\nu}^+$, we have the equality 
$$
\log \Lambda  =-\frac 2d\Bigl\{ \log ( (2\nu)\, !)  + \log ((2d^*)\,! )  \\-\log (\nu \,!)-\log (d^*\, !) -2d\log 2  +o(d)\Bigr\},
$$
uniformly for $1\leq  \nu \leq d-3$ and $d  \to\infty$. Using Stirling formula  \eqref{Equation:Stirling}, we deduce
$$
\begin{aligned}
-\frac d 2 \cdot \log \Lambda &=   \nu \log \nu +d^* \log d^* -d
+o(d)
\\
& =  \nu\log \nu +(d-\nu) \log (d-\nu) -d
+o(d),
\end{aligned}
$$
hence
\begin{equation}\label{end3}
\log \Lambda   = - \frac 2d \Bigl( \nu\log \nu +(d-\nu) \log (d-\nu)\Bigl) + 2+o(1),
\end{equation}
uniformly for $1\leq \nu\leq d-3$ and $d \to\infty$. By a direct study of the function $f_d$ defined by
$$
f_d: t\in [1, d-1]\mapsto f_d (t)  = t\log t +(d-t) \log (d-t),
$$
we deduce that, for all $1\leq t \leq d-1$, the function   $f_d$ satisfies the inequality
$$
f_d (d/2) = d\log (d/2) \leq f_d(t) \leq f_d (1)=  f_d (d-1) = (d-1)\log (d-1).
$$
Inserting this bound into \eqref{end3}, we obtain that
\begin{equation}\label{fourth}
 -2 \log d +2 -o(1)\le
\log \Lambda (d,\nu) \le  -2 \log d +2\log 2+2+o(1),
\end{equation}
uniformly for $1\leq \nu \leq d-3$.  Actually this formula also holds for $\Lambda (d,0)$ and $\Lambda (d, d-2)$ thanks to the formulas \eqref{exception}.

Combining \eqref{third}, \eqref{fourth} and Lemma \ref{lemma6.8}, we proved

\begin{lemma} \label{1d2}
Uniformly for  $d \to\infty$, $0\leq \nu \leq d-2$ and $d\leq p < 2d$, one has 
$$
 \frac {{\rm e}^2-o(1)}{d^2}\le  \calI_{d,p, \nu } \le \frac {4\, {\rm e}^2+o(1)}{d^2}\cdotp
$$
\end{lemma}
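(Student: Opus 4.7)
The plan is to simply assemble the three ingredients already prepared in \S\,\ref{6.3.2} and the paragraphs that follow: the two--sided estimate \eqref{third} for $\calI_{d,p,\nu}$, the asymptotic equality of its two sides provided by Lemma \ref{lemma6.8}, and the two--sided bound \eqref{fourth} for $\log \Lambda(d,\nu)$.

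First I would combine \eqref{third} with Lemma \ref{lemma6.8}. The upper bound in \eqref{third} is
$$
(1+o(1))\,\vert \lambda_\nu^-(\nu -1/2)\vert^{-2/d}\cdot  \vert \lambda_{d,\nu}^+(\nu+1/2)\vert^{-2/d}=(1+o(1))\,\Lambda(d,\nu),
$$
while Lemma \ref{lemma6.8} shows that
$$
\vert \lambda_\nu^-(\nu +1/2)\vert^{-2/d}\cdot  \vert \lambda_{d,\nu}^+(\nu-1/2)\vert^{-2/d} = (1+o(1))\,\Lambda(d,\nu),
$$
so the lower bound in \eqref{third} is also $(1+o(1))\Lambda(d,\nu)$. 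Therefore, uniformly for $0\le \nu\le d-2$ and $d\le p<2d$, we have the asymptotic equality
$$
\calI_{d,p,\nu} = (1+o(1))\,\Lambda(d,\nu), \qquad d \to\infty.
$$

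Next I would invoke \eqref{fourth}, which gives
$$
-2\log d + 2 - o(1) \;\le\; \log \Lambda(d,\nu)\; \le\; -2\log d + 2\log 2 + 2 + o(1),
$$
uniformly for $0\le \nu\le d-2$. Exponentiating this estimate yields
$$
\frac{\rme^{2}-o(1)}{d^2}\;\le\; \Lambda(d,\nu) \;\le\; \frac{4\rme^{2}+o(1)}{d^2},
$$
and combining with the previous step produces
$$
\frac{\rme^{2}-o(1)}{d^2}\;\le\; \calI_{d,p,\nu} \;\le\; \frac{4\rme^{2}+o(1)}{d^2},
$$
uniformly in the stated range of $\nu$ and $p$.

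There is essentially no hard step here: the substantive work has already been done in deriving \eqref{third}, Lemma \ref{lemma6.8} and \eqref{fourth} via Stirling's formula and the analysis of $f_d(t) = t\log t + (d-t)\log(d-t)$. The only care required is to check that the implicit $o(1)$ terms really are uniform in $\nu$ and in $p$ (for $d\le p<2d$), which follows because the $(t-p)$ contribution to $\vert\lambda_{d,p}(t)\vert^{-2/d}$ on the interval $[\nu-1/2,\nu+1/2]$ lies between $1$ and $(2d+1)^{2/d}=1+o(1)$, and the factor $\int_{\nu-1/2}^{\nu+1/2} |t-\nu|^{-2/d}\,dt$ equals $\tfrac{d}{d-2}\cdot (1/2)^{1-2/d}\to 1$ uniformly in $\nu$.
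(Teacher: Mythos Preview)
Your proof is correct and follows exactly the paper's approach: the paper simply states that Lemma \ref{1d2} follows by combining \eqref{third}, \eqref{fourth} and Lemma \ref{lemma6.8}, and you have spelled out precisely this combination. One tiny slip: the integral $\int_{\nu-1/2}^{\nu+1/2}|t-\nu|^{-2/d}\,\rmd t$ equals $\tfrac{2d}{d-2}\cdot(1/2)^{1-2/d}$ (you dropped a factor $2$), but its limit is indeed $1$, so nothing in the argument is affected.
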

 
  \subsubsection{End of the proof of Proposition \ref{goal}} 
  We split the end of the proof in two parts.
  \vskip .3cm
 \noindent $\bullet$ For the lower bound, we use positivity to write the inequality
 $$A_{L_{d,p}} \geq \sum_{\nu =0}^{d-2} \calI_\nu \ge (d-1) \cdot \frac {{\rm e}^2-o(1)}{d^2} \ge \frac {{\rm e}^2-o(1)}d, 
 $$
as a consequence of \eqref{decomposition}  and Lemma \ref{1d2}.
 
\vskip .3cm
\noindent $\bullet$ For the upper bound, we respectively apply Lemmas \ref{chr1}, \ref{chr2}, \ref{chr3}, \ref{chr3*} and \ref{1d2} to bound each of these terms in \eqref{decomposition}, and we obtain
$$
A_{L_{d,p}} \le \frac {5\, \rme^2+2\,\rme+o(1)}d\cdotp
$$
The proof of Proposition \ref{goal} is now complete. 
This concludes the proof of Theorem \ref{CaseofL}.

\subsection*{Acknowledgements}
We are thankful to the referee for his valuable comments.


   \end{document}